\documentclass[11pt]{amsart}

\usepackage{amssymb,amsfonts,theoremref,mathrsfs}
\usepackage{mdwlist}

\setlength{\abovedisplayskip}{8pt}
\setlength{\belowdisplayskip}{8pt}
\linespread{1.05}
\usepackage{newtxtext,newtxmath}

\usepackage{amssymb,textcomp}
\usepackage{enumerate}

\usepackage[utf8]{inputenc}
\usepackage[T1]{fontenc}

\usepackage[mathscr]{eucal}
\usepackage{xcolor}

\usepackage{tikz}
\tikzstyle{startstop} = [rectangle, rounded corners, minimum width=3cm, minimum height=1cm,text centered, draw=black, fill=red!30]

% \usepackage{hyperref}
%  \hypersetup{
%      colorlinks=true,
%      linktocpage=true,
%      linkcolor=red,
%      filecolor=blue,
%      citecolor = blue,
%      urlcolor=cyan,
%      }

% A4 Paper
\usepackage[a4paper, twoside=false, vmargin={2cm,3cm}, includehead]{geometry}

\usepackage{comment}
% Theorems
\newtheorem{lemma}{Lemma}[section]
\newtheorem{theorem}[lemma]{Theorem}
\newtheorem*{theorem*}{Theorem}
\newtheorem{corollary}[lemma]{Corollary}

\newtheorem*{claim*}{Claim}

\newtheorem{proposition}[lemma]{Proposition}
\newtheorem*{proposition*}{Proposition}

\newtheorem*{problem*}{Problem}

\theoremstyle{definition}

\DeclareMathOperator{\ord}{ord}
\DeclareMathOperator{\vol}{vol}
\DeclareMathOperator{\lcm}{lcm}

% Enumerations

% Blackboard letters
\DeclareMathOperator*{\E}{\mathbb{E}}

\newcommand{\C}{{\mathbb C}}

\newcommand{\N}{{\mathbb N}}
\newcommand{\Q}{{\mathbb Q}}
\newcommand{\R}{{\mathbb R}}

\newcommand{\Z}{{\mathbb Z}}

% Calligraphic letters

% Bold letters
%\renewcommand{\a}{{\textbf{a}}}

%\renewcommand{\b}{{\textbf{b}}}

% Various abbreciations

% for the function exp

%\newcommand{\lip}{{\text{\rm Lip}}}

\DeclareMathOperator{\dist}{dist}

\DeclareMathOperator{\Imag}{Im}
\renewcommand{\Im}{\Imag}

\DeclareMathOperator{\Lip}{Lip}

\DeclareMathOperator{\ML}{ML}

\usepackage[unicode]{hyperref}
\hypersetup{
  colorlinks=true,
  linktocpage=true,
  linkcolor=red,
  filecolor=blue,
  citecolor=blue,
  urlcolor=cyan,
}

\address{St John's College, Oxford and Mathematical Institute, University of Oxford; St Giles', Oxford OX1 3JP, UK}
\email{noah.kravitz@maths.ox.ac.uk}

\address{450 Jane Stanford Way Building 380, Sloan Mathematical Center, Stanford, CA 94305}
\email{jleng01@stanford.edu}

\begin{document}

\title{Quantitative pyjama}
\author{Noah Kravitz and James Leng}

\begin{abstract}
The ``pyjama stripe'' with parameter $\varepsilon>0$ is the set $E(\varepsilon)$ of all complex numbers $z$ such that the distance from $\Re(z)$ to the nearest integer is at most $\varepsilon$.  The Pyjama Problem of Iosevich, Kolountzakis, and Matolcsi asks whether, for every choice of $\varepsilon>0$, it is possible to cover the entire complex plane with finitely many rotations of $E(\varepsilon)$ around the origin.  Manners obtained an affirmative answer to this question by studying a $\times 2, \times 3$-type problem over a suitable solenoid.  Manners's argument provided no quantitative bounds (in terms of $\varepsilon$) on the number of rotations required, and Green has highlighted the problem of obtaining such quantitative bounds.  Our main result is that $\exp\exp\exp(\varepsilon^{-O(1)})$ rotations of $E(\varepsilon)$ suffice to cover the complex plane.  Our analysis makes use of the entropic tools developed by Bourgain, Lindenstrauss, Michel, and Venkatesh for quantitative $\times 2, \times 3$-type results.
\end{abstract}

\maketitle

\section{Introduction}
The topic of this paper is a charming geometric problem introduced by Iosevich, Kolountzakis, and Matolcsi~\cite{iosevich2006covering} in 2006.  For small $\varepsilon>0$, consider the ``pyjama stripe'' $$E(\varepsilon) := \{z \in \mathbb{C}: \Re(z) \in (-\varepsilon, \varepsilon) \pmod{1}\},$$
which consists of width-$2\varepsilon$ vertical stripes centered at the integers.  The Pyjama Problem asks whether, for every choice of $\varepsilon>0$, it is possible to cover the whole complex plane with finitely many rotations of $E(\varepsilon)$ around the origin.  When $\varepsilon>1/3$, it is easy to write down a set of three rotations of $E(\varepsilon)$ whose union covers the entire plane.

In general, each rotation of $E(\varepsilon)$ is a doubly-periodic set.  One natural strategy is to work with rotations that are doubly-periodic with respect to a common lattice $\Lambda$; then, to check that these rotations together cover the whole plane, it suffices to check that they cover a fundamental domain of $\Lambda$.  Malikiosis, Matolcsi, and Ruzsa~\cite{malikiosis2013note} showed that, somewhat surprisingly, there are no such ``periodic'' solutions once $\varepsilon<1/3$.  Nonetheless, using new ideas involving irrational rotations, they confirmed that the Pyjama Problem has an affirmative answer for $\varepsilon>1/5$.  The same authors showed that a finite set of generic rotations will never cover the whole plane; this reduces the viability of naive probabilistic proof strategies based on random rotations.

In 2013, Manners~\cite{Man13} spectacularly proved that the Pyjama Problem has an affirmative answer for all $\varepsilon>0$.  His beautiful work brought topological dynamics into the picture by exploiting an analogy with Furstenberg's $\times 2, \times 3$ Theorem (about $\times 2, \times 3$-invariant closed subsets of $\mathbb{R}/\mathbb{Z}$).  As we will discuss later, Manners showed that if one considers a particular family of rotations, then the Pyjama Problem is a consequence of a $\times 2, \times 3$-type problem over a suitable solenoid.

The drawback of Manners's proof is that, due to its use of compactness arguments, it provides no quantitative upper bounds on how many rotations (as a function of $\varepsilon$) are necessary.  Remedying this situation appears as Problem 41 in Ben Green's open problem list~\cite{green2024100}.  Our main result provides fairly reasonable quantitative bounds.

\begin{theorem}\label{thm:main}
There is a universal constant $C>0$ such that for every $0 <\varepsilon<1/10$, it is possible to cover the whole plane with $\exp\exp\exp(\varepsilon^{-C})$ rotations of $E(\varepsilon)$.
%there is a set $\Theta$ consisting of $\exp(O(1/\varepsilon)^C)$ unit-norm complex numbers such that $\cup_{\theta \in \Theta} E_\theta(\varepsilon)=\mathbb{C}$.
\end{theorem}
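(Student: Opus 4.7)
The plan is to follow Manners's reduction of the Pyjama Problem to a $\times 2, \times 3$-type dynamical question and then replace his topological-dynamics argument with quantitative entropy estimates in the spirit of Bourgain, Lindenstrauss, Michel, and Venkatesh (BLMV). The algebraic engine is the Gaussian factorization $5=(2+i)(2-i)$: multiplications by the Gaussian primes $2+i$ and $2-i$ on a Gaussian-integer solenoid will play the role of two multiplicatively independent expanding endomorphisms (they are coprime in $\Z[i]$ and neither is a unit times the other, mirroring the relationship between $2$ and $3$ on $\R/\Z$). The covering family will consist of the rotations by $k\arg(2+i)$ for $|k|\le N$, where $N=\exp\exp\exp(\varepsilon^{-C})$ is the target bound of the theorem; since $\arg(2+i)=\arctan(1/2)$ is an irrational multiple of $\pi$, this orbit samples the angular circle densely at the required rate.

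Each rotated pyjama stripe corresponds to a linear avoidance condition in $\C$ which lifts naturally to a suitable Gaussian-integer solenoid $S$ (the Pontryagin dual of $\Z[i][1/5]$). If a point $z\in\C$ is not covered, then a suitable weighted average over its $T_{2\pm i}$-orbit yields a probability measure $\mu$ on $S$ which is almost invariant under the commuting expansions $T_{2+i},T_{2-i}\colon S\to S$ and which avoids $\varepsilon$-tubes around certain hyperplanes under every iterate; this avoidance is a quantitative upper bound on the $\varepsilon$-scale entropy of $\mu$. The BLMV quantitative theorem then supplies a dichotomy: either $\mu$ is close to Haar measure at scale $\varepsilon$, in which case it must load the avoided $\varepsilon$-tubes, contradicting the avoidance; or $\mu$ concentrates on very few atoms, contradicting the smoothness of $\mu$ inherited from the orbital averaging. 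Either way, no such $\mu$ can exist, and so every $z$ is covered once $N$ exceeds the triple-exponential threshold. The three exponentials arise from three compounded losses: the polynomial BLMV rate translating $\varepsilon$ into an entropy budget (one exponential via Shannon-type inversions), the exponential cost of propagating almost-invariance through $\log(1/\varepsilon)$ many scales on $S$, and the exponential number of Gaussian-integer iterates needed for the rotated family to equidistribute on $S$ at scale $\varepsilon$.

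The principal obstacle will be adapting BLMV from $\R/\Z$ with the $\times 2,\times 3$ action to the two-dimensional Gaussian solenoid $S$ with the $\times(2+i),\times(2-i)$ action, while keeping every error term genuinely polynomial. Because $\mu$ is only \emph{almost} invariant and lives on a solenoid rather than a circle, the self-joining entropy machinery underpinning BLMV must be revisited; in particular, the interplay between the rotational and radial parts of multiplication by $2\pm i$ introduces cross-terms absent in the classical $\R/\Z$ setting. Managing these cross-terms without incurring super-polynomial losses, and thereby extracting the explicit constant $C$ in the theorem, is where I expect the main technical work to lie.
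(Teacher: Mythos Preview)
Your outline has two genuine gaps that would each be fatal on their own.

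\textbf{First, one prime is not enough.} You propose to run the $\times 2,\times 3$ machinery with the pair $T_{2+i},T_{2-i}$ coming from the single factorization $5=(2+i)(2-i)$. But the Pyjama Problem is about \emph{rotations}, i.e.\ unit-norm multiplications, and the unit-norm elements you can build from $2\pm i$ are just powers of $(2+i)/(2-i)=(3+4i)/5$ and its inverse --- a rank-one group. Your covering family ``rotations by $k\arg(2+i)$'' is, up to a single irrational square root, exactly this cyclic group. There is no second independent rotation here, so there is no analogue of the two commuting expanding maps that drive the Rudolph/BLMV rigidity. Manners and the paper use two distinct rational primes, $5$ and $13$, precisely to obtain two multiplicatively independent unit-norm rotations $\theta_5=P_5/\overline{P_5}$ and $\theta_{13}=P_{13}/\overline{P_{13}}$; the solenoid is then $\widehat{\mathbb{Z}[i][1/\overline{P_5},1/\overline{P_{13}}]}$, on which $\times\theta_5$ and $\times\theta_{13}$ are genuinely non-invertible (expanding in the $5$-adic and $13$-adic directions respectively). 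On your solenoid $\widehat{\mathbb{Z}[i][1/5]}$, both $2+i$ and $2-i$ are units, so $T_{2\pm i}$ are automorphisms and the positive-entropy machinery does not even get off the ground.

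\textbf{Second, the atomic branch of the dichotomy is not a contradiction.} You write that if $\mu$ concentrates on few atoms, this contradicts ``the smoothness of $\mu$ inherited from the orbital averaging.'' It does not. If $z$ is (close to) a torsion point of the solenoid, its orbit under the rational rotations $\Theta_N$ really is (close to) a finite set, and the averaged measure really is (close to) atomic; there is nothing to contradict. Malikiosis--Matolcsi--Ruzsa showed that no finite set of rational rotations can cover $\mathbb{C}$, so these exceptional points are unavoidable within the dynamical argument. In the paper this is the ``major arc'' case of Theorem~\ref{thm:quantitative-rationality}: one shows that the uncovered set is contained in small neighborhoods of finitely many near-lattices $\tfrac{D}{m}\mathbb{Z}[i]$, and then a separate ``irrational trick'' (adjoining a bounded number of carefully chosen irrational rotations; see Section~\ref{s:irrationalargument} and Lemma~\ref{lem:irrationality-trick}) is needed to cover those residual points. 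Your proposal omits this step entirely.
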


We proceed by analyzing the same family of rotations that appears in Manners's proof, and the vast majority of our quantitative improvement takes place on the dynamics side (with a minor improvement in the deduction of the geometric result from the dynamical result).  %Our main technical result is a ``quantitative rationality lemma'' (see CITE below).  
One key input is an adaptation of the entropic tools developed by Bourgain, Lindenstrauss, Michel, and Venkatesh~\cite{BLMV08} for quantitative $\times 2, \times 3$ problems.

The upper bound in Theorem~\ref{thm:main} is likely quite far from the truth.  The trivial volume-counting lower bound is $\varepsilon^{-1}/2$, and this seems difficult to improve.  Heuristics, kindly communicated to us by Freddie Manners, suggest that one might expect an upper bound on the order of $\varepsilon^{-1} \log (\varepsilon^{-1}) \log \log (\varepsilon^{-1})$.  Green~\cite{green2024100} proposes that an upper bound of $\varepsilon^{-C}$ would be a natural intermediate goal.

%In the following section we will sketch Manners's dynamical reduction and precisely state our ``quantitative rationality lemma'', the main technical result behind Theorem~\ref{thm:main}.  At that point we will describe the organization of the remainder of the paper.

\subsection{Main dynamical result}

We follow the setup from Manners's paper~\cite[Section 5]{Man13}.  For $\theta$ a unit-norm complex number, let $$E_\theta(\varepsilon):=\theta^{-1}E(\varepsilon)=\{x \in \mathbb{C}: \Re(\theta z) \in (-\varepsilon, \varepsilon) \pmod{1}\}$$ denote the rotation of $E(\varepsilon)$ around the origin by an angle of $\arg(\theta^{-1})$.  As shown by Malikiosis, Matolcsi, and Ruzsa, we will need a combination of ``rational'' and ``irrational'' rotations.  For the rational rotations, define the Gaussian primes
$$P_5:=1+2i, \quad P_{13}:=2+3i$$
(so named because of their norms),
and define the unit-norm complex numbers
$$\theta_5: = P_5/\overline{P_5}, \quad \theta_{13} = P_{13}/\overline{P_{13}},$$
where as usual the bar denotes complex conjugation.  Our set of rational rotations is $$\Theta_N:= \{\theta_5^s\theta_{13}^t: 0 \leq s, t \leq N\}$$
for a suitable choice of $N$.  The rotations $\theta_5,\theta_{13}$ are ``rational'' in the sense of living in $\mathbb{Q}[i]$; in particular there is some (large) integer $M$ such that all of the pyjama stripes $E_\theta(\varepsilon)$ for $\theta \in \Theta_N$ are doubly-periodic with respect to the lattice $M\mathbb{Z}[i]$.

Although it is not the case that $\cup_{\theta \in \Theta_N} E_\theta(\varepsilon)=\mathbb{C}$, we will show that $\mathbb{C} \setminus \cup_{\theta \in \Theta_N} E_\theta(\varepsilon)$ is contained in a union of small balls around certain ``highly rational points'' (more or less the elements of $D\mathbb{Z}[i]$ for small natural numbers $D$).  Once such a ``rationality lemma'' is established, one can use an ``irrational trick'' (due originally to Malikiosis, Matolcsi, and Ruzsa) to cover the remaining points of $\mathbb{C}$ by adding a few irrational rotations.

For the rationality lemma, it is convenient to work in a dual formulation of the problem.  The continuous characters on $\mathbb{C}$ are the maps $\chi_w: \mathbb{Z} \to \mathbb{R}/\mathbb{Z}$ defined by
$$\chi_w(z):=\Re(wz) \pmod{1}$$
for $w \in \mathbb{C}$.  In this language, we can write
$$E_\theta(\varepsilon)=\{z \in \mathbb{C}: \chi_\theta(z) \in (-\varepsilon, \varepsilon) \}.$$
Dualizing, we see that the Pyjama Problem is asking for a finite set of $\theta$'s such that the union of the corresponding sets
$$E'_\theta(\varepsilon):=\{\chi_w: \chi_w(\theta) \in (-\varepsilon, \varepsilon)\}$$
contains all continuous characters.  The desired rationality lemma would say that $\cup_{\theta \in \Theta_N} E'_\theta(\varepsilon)$ contains $\chi_w$ for all $w \in \mathbb{C}$ outside of small balls around highly rational points.

We can now explain how dynamics enters the picture.  We have an action of $\mathbb{C}^\times$ on the space of continuous characters of $\mathbb{C}$ via
$$(\theta \cdot \chi_w)(z):=\chi_w(\theta z)$$
for $\theta \in \mathbb{C}^\times$ and $\chi_w$ a continuous character.  Now we can write
$$E'_\theta(\varepsilon)=\theta \cdot E_1'(\varepsilon),$$
so $\chi_w \in E'_\theta(\varepsilon)$ if and only if $\theta \cdot \chi_w \in E'_1(\varepsilon)$.  Thus $\cup_{\theta \in \Theta_N} E'_\theta(\varepsilon)$ consists of precisely the characters $\chi_w$ such that $\theta \cdot \chi_w \in E'_1(\varepsilon)$ for some $\theta \in \Theta_N$.  We think of starting with the ``point'' $\chi_w$ and then iteratively acting by the transformations $\theta_5, \theta_{13}$; the question of interest is which starting points have orbits that avoid the open set $E'_1(\varepsilon)$.  The analogy with the $\times 2, \times 3$ problem arises because $P_5, P_{13}$ are coprime.

These notions still make sense if one replaces the set of continuous characters on $\mathbb{C}$ with the space $\widehat{\mathbb{C}}$ of \emph{all} characters on $\mathbb{C}$ (additive homomorphisms from $\mathbb{C}$ to $\mathbb{R}/\mathbb{Z}$, not necessarily continuous).  Malikiosis, Matolcsi, and Ruzsa showed that the Pyjama Problem is equivalent to asking for a finite set of unit-norm complex numbers $\theta$ whose corresponding sets of characters
$\{\chi \in \widehat{\mathbb{C}}: \chi(\theta) \in (-\varepsilon, \varepsilon)\}$
together cover all of $\widehat{\mathbb{C}}$.  The advantage of this maneuver is that $\widehat{\mathbb{C}}$ is compact (since it is the Pontryagin dual of a discrete space).

As Manners describes in~\cite[Section 5.2]{Man13}, the space $\widehat{\mathbb{C}}$ is rather unwieldy because it is unmetrizable and generally ``too large'' to work with.  The solution is to study the dual space of a small subring $A$ of $\mathbb{C}$, rather than the dual space of all of $\mathbb{C}$.  Recall that in our dynamical perspective we are interested in orbits under the action of $\theta_5, \theta_{13}$.  For this action to be well-defined, the subring $A$ must be closed under multiplication by $\theta_5, \theta_{13}$.  With this in mind, we make the ``smallest possible choice''
$$A:=\mathbb{Z}[i][1/\overline{P_5}, 1/\overline{P_{13}}] \subseteq \mathbb{Q}(i)$$
(endowed with the discrete topology), and we define its Pontryagin dual
$$X:=\widehat{A}$$
(the space of all additive homomorphisms from $A$ to $\mathbb{R}/\mathbb{Z}$).  The space $X$ contains $\mathbb{C}$ as a subspace and is compact just like $\widehat{\mathbb{C}}$, but it is smaller and more ``manageable''.  For instance, we will see below that $X$ is metrizable and can be expressed as a quotient of $\mathbb{Q}_5 \times \mathbb{Q}_{13} \times \C$ with the fundamental domain $\mathbb{Z}_5 \times \mathbb{Z}_{13} \times [-1/2,1/2) \times [-1/2,1/2)i$.

In this setting, the pyjama stripe corresponds to the open set
$$E^*_1(\varepsilon):=\{\chi \in X: \chi(1) \in (-\varepsilon,\varepsilon) \pmod{1}\}.$$
It turns out that highly rational points of $\mathbb{C}$ correspond to low-order torsion points of $X$, so our task is to show that for each point $p \in X$, either $p$ is very close to a low-order torsion point or the orbit $\Theta_N \cdot p$ intersects the set $E^*_1(\varepsilon)$.  (Note that when we work with $\widehat{A}$ instead of $\widehat{\mathbb{C}}$, we cannot hope for finitely many rotations within elements of $A$ of $E^*_1(\varepsilon)$ to cover $\widehat{A}$, because if that were true, finitely many rational rotations of $E_1(\varepsilon)$ would be sufficient to cover $\mathbb{C}$.) This dichotomy is made precise in our main technical result, which is a quantitative analogue of~\cite[Lemma 5.2]{Man13}.

\begin{theorem}[Quantitative rationality lemma]\label{thm:quantitative-rationality}
There is a constant $C>0$ such that for any $0<\varepsilon, \delta<1/10$, the following holds with $$n=n_\varepsilon:=\exp\exp\exp(\varepsilon^{-C}) \quad \text{and} \quad N=N_{\varepsilon, \delta}:=\max(\exp\exp\exp(\varepsilon^{-C}), C\log(\delta^{-1})).$$ 
For each point $p \in X$, either $\Theta_N \cdot p$ intersects $E_1^*(\varepsilon)$, or there is a torsion point $r \in X$ of order at most $n$ such that $\dist(p, \overline{B}_{1/2}^{\mathbb{C}}(r)) \leq \delta$.
\end{theorem}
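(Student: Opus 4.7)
The plan is to argue the contrapositive: assume that $\Theta_N \cdot p$ avoids $E_1^*(\varepsilon)$ entirely, and produce a torsion point of order at most $n$ near $p$. The central object is the orbit empirical measure
\[
\mu \;:=\; \frac{1}{(N+1)^2}\sum_{0 \le s,t \le N}\delta_{\theta_5^s\theta_{13}^t \cdot p}
\]
on $X$, which by hypothesis satisfies $\mu(E_1^*(\varepsilon))=0$. Once $N \ge C\log(\delta^{-1})$ is large enough, $\mu$ is $O(1/N)$-close to $\theta_5$- and $\theta_{13}$-invariance in total variation. In the adelic description $X \cong (\C \times \Q_5 \times \Q_{13})/A$, multiplication by $\theta_5 = P_5/\overline{P_5}$ acts as an irrational rotation on $\C$, as a contracting shift in the $\Q_5$-coordinate, and by a unit in the $\Q_{13}$-coordinate (and symmetrically for $\theta_{13}$). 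Thus the pair $(\theta_5,\theta_{13})$ drives a rank-two action of precisely the $\times p,\times q$ type to which the entropic rigidity philosophy of Bourgain, Lindenstrauss, Michel, and Venkatesh~\cite{BLMV08} applies.

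The heart of the proof is a quantitative measure-rigidity dichotomy for $\mu$, obtained by adapting the BLMV entropy framework to the solenoid $X$. Working with dyadic partitions of $X$ at a sequence of scales $\eta$ running down to roughly $\varepsilon$, I will exploit the near-invariance and the commuting expansions of $\theta_5,\theta_{13}$ to derive an effective entropy increment: at each scale either the entropy of $\mu$ is nearly maximal (so $\mu$ already approximates Haar measure at that scale), or the support of $\mu$ concentrates on a small union of boxes controlled by a torsion coset of bounded order. Iterating this dichotomy over the $O(\log(\varepsilon^{-1}))$ scales, and carefully tracking the accumulated losses, should yield the stated triple-exponential bounds. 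To rule out the Haar alternative I use the following simple observation: the set $E_1^*(\varepsilon)$ has Haar mass $\asymp \varepsilon$ in $X$, so any Haar-approximate $\mu$ would force an orbit point into $E_1^*(\varepsilon)$, contradicting the standing assumption. Consequently at least one orbit element $\theta_5^{s_0}\theta_{13}^{t_0} \cdot p$ will lie within $\delta$ of a torsion point $r'$ of order at most $n$; since $\theta_5,\theta_{13}$ are units of $A$, translating back to $r := (\theta_5^{s_0}\theta_{13}^{t_0})^{-1}\cdot r'$ produces a torsion point of the same order near $p$. The ball $\overline{B}_{1/2}^{\C}(r)$ in the statement appears because the fundamental-domain description of $X$ allows a free translation by any element of $[-1/2,1/2)^2 \subset \C$.

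The main obstacle is the effective BLMV entropy-increment step in the solenoidal setting. Manners's qualitative argument passes to a weak-$*$ limit measure on which rigidity applies cleanly, whereas here one must simultaneously control near-invariance defects, finite-scale entropies, and explicit torsion orders on $X$. Balancing these errors should produce the three layers of overhead visible in the statement: an inverse-theorem loss at each single scale, an iteration over $O(\log(\varepsilon^{-1}))$ scales, and the requirement that $N$ be large enough to detect torsion cosets of order up to $n$ — together accounting for the triple exponential in $\varepsilon^{-C}$.
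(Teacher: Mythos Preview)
Your proposal has a genuine gap: the ``entropy dichotomy'' you describe is essentially the theorem itself, and you have not supplied a mechanism for proving it. The paper does \emph{not} apply entropy methods directly to the orbit empirical measure. Instead it first pigeonholes on the short orbit $\Theta_{N_0}\cdot p$ (with $N_0=\exp\exp(\varepsilon^{-C_1})$) to produce a nonzero Gaussian rational $r=\theta_5^{s_1}\theta_{13}^{t_1}-\theta_5^{s_2}\theta_{13}^{t_2}$ with $rp$ very small, and then splits into a major/minor arc dichotomy according to how close $rp$ is to $\overline{B}_1^{\C}(0)$. In the minor-arc case, a quantitative version of Manners's ``non-archimedean limit point'' analysis (Theorem~\ref{thm:mannersapproximation}) shows that the \emph{difference set} $(\Theta_{N_0+N_1}\cdot p)-(\Theta_{N_0+N_1}\cdot p)$ is $\eta$-dense in $X$; this denseness is what forces the orbit to occupy $\gg|\mathcal P|^{1/2}$ cells of a carefully skewed partition, which is the hypothesis needed to feed into the BLMV entropy machinery. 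Without this bootstrapping step there is no a priori lower bound on the entropy of your empirical measure $\mu$, so your iteration has nothing to iterate on. The major-arc case is an entirely separate argument (Proposition~\ref{prop:super-major}) using Baker's Theorem and a geometric lemma about complex circles intersecting $E_1^*(0)$; none of this appears in your outline.

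Two further specific problems: first, $\theta_5=P_5/\overline{P_5}$ is \emph{not} a unit of $A=\Z[i][1/\overline{P_5},1/\overline{P_{13}}]$ (since $P_5^{-1}\notin A$), so $\times\theta_5$ is not an automorphism of $X$ and your ``translate back by $(\theta_5^{s_0}\theta_{13}^{t_0})^{-1}$'' step does not preserve torsion order as written. Second, your explanation of the radius $1/2$ in $\overline{B}_{1/2}^{\C}(r)$ is incorrect: the naive division-by-$r$ argument only gives proximity to a complex ball of large radius (up to the height of $r$), and cutting this down to $1/2$ requires a separate argument (Lemma~\ref{lem:sphereintersection} plus equidistribution of $\theta_5$-rotations via Baker) showing that if $p$ is near a complex circle of radius $\ge 1/2$ around a torsion point then the orbit already hits $E_1^*(\varepsilon)$.
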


We emphasize that since $\Theta_N \subseteq \mathbb{Q}[i]$, the results of Malikiosis, Matolcsi, and Ruzsa guarantee that $\cup_{\theta \in \Theta_N} E^*_\theta(\varepsilon)$ cannot cover all of $X$.  Thus Theorem~\ref{thm:quantitative-rationality} (indeed, also Manners's non-quantitative version) is optimal in the qualitative sense of constraining the complement $X \setminus \cup_{\theta \in \Theta_N} E^*_\theta(\varepsilon)$ to lie in a very small set.  We remark that the independence of $n$ from $\delta$ is important in our application to Theorem~\ref{thm:main}.
%Even though we carry out the bulk of our analysis in the space $X$, it is essential that we afterwards apply the ``irrational trick'' in $\mathbb{C}$ in order to cover the remaining few points.

\subsection{Organization}
In Section~\ref{sec:overview} we describe the proof strategy for Theorem~\ref{thm:quantitative-rationality}, including the analogy with the $\times 2, \times 3$ Problem.  In Section~\ref{s:notation} we establish useful properties of $A,X$ and prove other preliminary results.  We treat the three main stages of the proof (major arcs, minor arcs, and coda) in Sections~\ref{s:supermajor}, \ref{s:minor}, and \ref{s:maindeduction}, respectively.  In Section~\ref{s:irrationalargument} we transfer Theorem~\ref{thm:quantitative-rationality} back to the setting of $\mathbb{C}$ and apply the ``irrational trick'' to complete the proof of Theorem~\ref{thm:main}.  We make some final remarks in Section~\ref{sec:concluding}.

\section{General strategy}\label{sec:overview}

As in Manners's work, we exploit an analogy between the Pyjama Problem with the rotations $\Theta_N$ and Furstenberg's $\times 2, \times 3$ Problem.  In this section, we give some background for $\times 2, \times 3$-type problems; explain the central analogy; prove a quantitative strengthening of Furstenberg's $\times 2, \times 3$ Theorem (\'a la~\cite{BLMV08}); outline the analogous proof of Theorem~\ref{thm:quantitative-rationality}; and sketch the deduction of Theorem~\ref{thm:main} from Theorem~\ref{thm:quantitative-rationality}.  See Section~\ref{s:notation} for terminology of the space $X$ and its geometry.

\subsection{The $\times 2, \times 3$ Problem}
Let us briefly recall the setup of Furstenberg's $\times 2, \times 3$ Problem.  The doubling and tripling maps $T_2, T_3:\mathbb{R}/\mathbb{Z} \to \mathbb{R}/\mathbb{Z}$ are defined by $T_2(x):=2x$ and $T_3(x):=3x$.  A Borel subset $S \subseteq \mathbb{R}/\mathbb{Z}$ is \emph{$\times 2$-invariant} if $T_2^{-1}(A)=A$; $\times 3$-invariance is defined analogously.

There is a great variety of closed sets that are invariant under either $\times 2$ or $\times 3$ individually (for instance, various finite sets of rationals and Cantor sets).  Furstenberg's $\times 2, \times 3$ Theorem says that the range of closed sets invariant under \emph{both} $\times 2$ and $\times 3$ is much more restricted.  More precisely, this theorem states that if $S \subseteq \mathbb{R}/\mathbb{Z}$ is a $\times 2, \times 3$-invariant closed set, then either $S$ consists of a finite set of rationals or $S=\mathbb{R}/\mathbb{Z}$.  The theorem holds more generally with $2,3$ replaced by any other integers $a,b>1$ that are \emph{multiplicatively independent} in the the sense that $\log(a)/\log(b) \notin \mathbb{Q}$.

Furstenberg's celebrated $\times 2, \times 3$ Conjecture concerns not only the possible $\times 2, \times 3$-invariant sets but also the possible $\times 2, \times 3$-invariant measures on $\mathbb{R}/\mathbb{Z}$.  Recall that for a measure $\mu$ and a transformation $T$, we define $T\mu$ by $T\mu(S):=\mu(T^{-1}(S))$ for every measurable set $S$, and we say that $\mu$ is \emph{$T$-invariant} if $T\mu=\mu$.  A measure is \emph{atomic} if some point has strictly positive measure.  Furstenberg's $\times 2, \times 3$ Conjecture asserts that the Lebesgue measure is the only $T_2,T_3$-invariant nonatomic measure on $\mathbb{R}/\mathbb{Z}$.

Although the $\times 2, \times 3$ Conjecture remains open in general, Rudolph~\cite{rudolph19902} has proven it under the additional assumption that $\mu$ has positive entropy (see also~\cite{johnson1992measures,parry1996squaring,host1995nombres}).  More recently, Bourgain, Lindenstrauss, Michel, and Venkatesh \cite{BLMV08} have proven a ``quantitative'' version of Rudolph's result.  One consequence is the following quantitative strengthening of Furstenberg's $\times 2, \times 3$ Theorem.  Write $\Psi_N=:\{2^s 3^t: 0 \leq s,t \leq N\}$.

\begin{theorem}\label{thm:quantitative-furstenberg}
There is a constant $C>0$ such that for any $\varepsilon,\delta>0$, the following holds with
$$n=n_\varepsilon:=\exp\exp\exp(\varepsilon^{-C}) \quad \text{and} \quad N=N_{\varepsilon,\delta}:=\exp\exp(\varepsilon^{-C})+\log_2(\delta^{-1}).$$
For each point $x \in \R/\Z$, either $\Psi_N \cdot x$ is $\varepsilon$-dense in $\R/\Z$, or or $x$ lies within distance $\delta$ of some rational with denominator at most $n$.
\end{theorem}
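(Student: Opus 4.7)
The plan is to argue by contrapositive via an empirical measure and a quantitative Rudolph-type measure-rigidity theorem modeled on \cite{BLMV08}. Suppose $\Psi_N \cdot x$ is not $\varepsilon$-dense, so some open arc $J \subset \R/\Z$ of length $\varepsilon$ is disjoint from the orbit. Set $M := \max(\exp\exp(\varepsilon^{-C}),\, \log_2(\delta^{-1}))$ (so that $M \leq N$) and form the empirical measure
$$\mu := \frac{1}{M^2}\sum_{0 \le s, t < M}\delta_{2^s 3^t x}.$$
Then $\|T_a\mu - \mu\|_{TV} = O(1/M)$ for $a \in \{2,3\}$, so $\mu$ is approximately $\times 2, \times 3$-invariant; moreover, since $\mu(J)=0$ and $\mathrm{Leb}(J)=\varepsilon$, pairing $\mu - \mathrm{Leb}$ against a Fej\'er kernel of degree $\lceil \varepsilon^{-1}\rceil$ produces a nontrivial Fourier coefficient $|\widehat{\mu}(k_0)| \gtrsim \varepsilon$ at some frequency $0 < |k_0| \leq O(\varepsilon^{-1})$.

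The core technical ingredient I need is a quantitative rigidity statement: if $\nu$ is an $O(1/M)$-approximately $\times 2, \times 3$-invariant probability measure on $\R/\Z$ with $|\widehat{\nu}(k_0)| \geq \eta$ for some $0<|k_0| \leq K$, then there exists a rational $p/q$ with $q \leq \exp\exp\exp(\mathrm{poly}(\eta^{-1},\log K))$ such that $\nu\bigl(B_{2^{-\Omega(M)}}(\mathcal{O})\bigr) \geq 1-2^{-\Omega(M)}$, where $\mathcal{O}$ is the $\times 2, \times 3$-orbit of $p/q$. I would establish this following the BLMV entropy framework in three stages: (a) a Parseval-style calculation converts the Fourier hypothesis into a strict deficit from $M\log 6$ in the dyadic entropy of $\nu$ at scale $2^{-M}$; (b) the BLMV coupling-with-a-Bernoulli-extension argument, iterated across dyadic scales and exploiting the multiplicative independence of $2$ and $3$, absorbs this entropy deficit into atomic concentration; (c) Rudolph's algebraic identification of invariant atomic measures forces the limiting atoms onto rationals in a single $\times 2, \times 3$-orbit, with denominator controlled by the number of atoms.

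Applying this result to our $\mu$ with $\eta \asymp \varepsilon$ and $K \asymp \varepsilon^{-1}$ yields a rational $p/q$ with $q \leq \exp\exp\exp(\varepsilon^{-O(1)})$ and concentration scale $r := 2^{-\Omega(M)} \leq \delta$. Since the number of ``bad'' indices $\{(s,t): 2^s 3^t x \notin B_r(\mathcal{O})\}$ is at most $2^{-\Omega(M)} \cdot M^2 \ll 1$, every orbit index must be good --- in particular $(0,0)$ --- so $x \in B_r(\mathcal{O})$. Writing $x \equiv a/b + \theta \pmod 1$ with $a/b \in \mathcal{O}$ (hence $b \mid q$) and $|\theta| \leq r \leq \delta$ yields $x$ within distance $\delta$ of a rational with denominator $b \leq q \leq n_\varepsilon$, as required.

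The main obstacle is the quantitative Rudolph input, specifically the BLMV Bernoulli-coupling step. Classical Rudolph is non-quantitative, and making its effective version explicit requires careful entropy bookkeeping across dyadic scales, handling approximate (rather than exact) invariance with controlled error propagation, and tracking the precise dependence on the Fourier parameters $\eta$ and $K$. The triple-exponential bound emerges from a cascade of three quantitative layers --- Fourier-to-entropy, the iterated Bernoulli coupling, and entropy-to-denominator --- each contributing one exponential loss.
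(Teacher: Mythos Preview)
Your approach has a genuine gap at the ``quantitative Rudolph'' step, and the sketch (a)--(c) does not establish it. First, step (a) is miscalibrated: the empirical measure $\mu$ is supported on $M^2$ points, so its entropy with respect to \emph{any} partition is at most $2\log M$; the entropy deficit relative to $M\log 2$ (or $M\log 6$) is automatic from atomicity and carries no information from the Fourier hypothesis. Second, and more seriously, the implication you need in step (b)--(c) --- ``approximately $\times 2,\times 3$-invariant with low entropy $\Rightarrow$ concentrated near a rational orbit'' --- is not what BLMV proves. Their entropy machinery runs in the \emph{forward} direction: if a set (or measure) already occupies $\gg M^{\rho}$ cells of the $M$-partition, then its short $\times 2,\times 3$-orbit is $(\log M)^{-c\rho}$-dense. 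The reverse implication you want, passing from ``not close to Lebesgue'' to ``close to rational'', is essentially a quantitative Furstenberg measure-classification, which is open; Rudolph's atomic identification in your step (c) requires exact invariance and does not survive the $O(1/M)$ perturbation. So as written, your argument reduces the theorem to a black box that is at least as hard as the theorem itself.

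The paper's proof is structurally quite different and avoids this trap. It begins not with a measure but with a \emph{pigeonhole}: among the $(N_0+1)^2$ points $2^s3^t x$ with $0\le s,t\le N_0$, two must lie within $N_0^{-2}$, producing a nonzero integer $r=2^{s_1}3^{t_1}-2^{s_2}3^{t_2}$ with $|r|\le 6^{N_0}$ and $\|rx\|_{\R/\Z}\le N_0^{-2}$. This single step already accounts for the rational approximation. If $\|rx\|\le\delta$ (major arc), dividing by $r$ places $x$ within $\delta$ of a rational of denominator $\le 6^{N_0}$. If $\delta<\|rx\|\le N_0^{-2}$ (minor arc), a Baker-type gap bound for $\{2^s3^t\}$ shows that $\Psi_{N_1}\cdot(rx)$ is $\eta$-dense in $\R/\Z$ for suitable $N_1,\eta$; hence the \emph{difference set} $(\Psi_{N_0+N_1}\cdot x)-(\Psi_{N_0+N_1}\cdot x)$ meets every cell of a partition of mesh $\eta$, forcing $\Psi_{N_0+N_1}\cdot x$ itself to occupy $\gg M^{1/2}$ cells. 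Only now does BLMV enter, in the forward direction, to conclude that a slightly longer orbit is $\varepsilon$-dense. The two exponentials in $N$ come from Baker and from the BLMV entropy argument respectively; the rational denominator bound $n$ picks up a third exponential because $r$ can be as large as $6^{N_0}$.
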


% \begin{theorem}%\label{thm:quantitative-furstenberg}
% There is a constant $C>0$ such that the following holds for all $N \in \mathbb{N}$ and $0<\varepsilon<1/10$ satisfying $N \geq \exp \exp (\varepsilon^{-C})$.  For each point $x \in \mathbb{R}/\mathbb{Z}$, either
% $$\{2^s 3^t x: 0 \leq s,t \leq N\}$$
% is $\varepsilon$-dense in $\mathbb{R}/\mathbb{Z}$, or $x$ lies within distance $\exp(-N)$ of a rational with denominator at most $\exp\exp \exp (\varepsilon^{-C})$.
% \end{theorem}

\subsection{The analogy}

The ``dictionary'' between the $\times 2, \times 3$ Problem and the Pyjama Problem with rotations $\Theta_N$ goes as follows:
\[
\begin{aligned}
\widehat{\mathbb{Z}}=\mathbb{R}/\mathbb{Z} \;&\longleftrightarrow\; X=\widehat{A}; \\
\times 2, \times 3
\;&\longleftrightarrow\;
\times \theta_5, \times \theta_{13}; \\
\text{rational point of $\mathbb{R}/\mathbb{Z}$} \;&\longleftrightarrow\; \text{complex ball around torsion point of $X$}; \\
\text{denominator of a rational point} \;&\longleftrightarrow\; \text{order of a torsion point};\\
\text{orbit is $\varepsilon$-dense} \;&\longleftrightarrow\; \text{orbit intersects $E_1^*(\varepsilon)$}.
\end{aligned}
\]
One should think of $\mathbb{R}/\mathbb{Z},A$ as metric measure spaces and of $(\mathbb{R}/\mathbb{Z},\times 2, \times 3),(X, \times \theta_5, \times \theta_{13})$ as topological dynamical systems.  The Gaussian rationals $\theta_5, \theta_{13} \in A$ are ``coprime'' in a sense analogous to the sense in which the integers $2,3$ are coprime.

The points in $\mathbb{R}/\mathbb{Z}$ with non-dense $\times 2, \times 3$-orbits are precisely the rationals (in fact these orbits are finite).  The situation is more complicated in $X$, which looks locally like a piece of $\mathbb{Q}_5 \times \mathbb{Q}_{13} \times \C$ (with $\times \theta_5, \times \theta_{13}$ acting by component-wise multiplication).  Here, the points with finite orbits are the torsion points of $X$, but a complex ball around a torsion point also has its orbit contained in a finite union of disks, essentially because $\times \theta_5, \times \theta_{13}$ are ``non-expansive'' in the $\mathbb{C}$-coordinate.

In their respective settings, Theorems~\ref{thm:quantitative-furstenberg} and~\ref{thm:quantitative-rationality} say that the \emph{finite} orbit of a point becomes fairly dense unless the point lies very close to a substructure whose \emph{infinite} orbit is contained in a fairly small compact set.  The Pyjama notion of ``fairly dense'' is weaker than density in the full space $X$; it is merely the condition that we need for proving Theorem~\ref{thm:main}.

To finish putting our new work in context, let us state Manners's rationality lemma, which is infinitary just like Furstenberg's $\times 2, \times 3$ Theorem.
\begin{theorem}[Manners's rationality lemma]
If $S \subseteq X$ is a closed $\times \theta_5,\times \theta_{13}$-invariant set, then either $S = X$, or $S \subseteq \cup_{j = 1}^L B_R^\mathbb{C}(q_j)$ for some $R > 0$ and some torsion points $q_1, \dots, q_L \in X$.
\end{theorem}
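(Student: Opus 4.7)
The plan is to deduce Manners's infinitary lemma as a corollary of the quantitative Theorem~\ref{thm:quantitative-rationality}. Fix a closed $\times\theta_5,\times\theta_{13}$-invariant $S\subseteq X$ with $S\neq X$. The first and most delicate step is to produce some $\varepsilon_0>0$ for which $E_1^*(\varepsilon_0)\cap S=\emptyset$. If no such $\varepsilon_0$ existed, then $E_1^*(\varepsilon)\cap S$ would be nonempty for every $\varepsilon>0$; compactness of $X$ together with closedness of $S$ would then yield some $p^*\in S$ with $p^*(1)=0$ in $\mathbb{R}/\mathbb{Z}$. Invariance of $S$ places the entire orbit $\Theta\cdot p^*$ inside $S$, so one reaches a contradiction with $S\neq X$ from a density assertion $\overline{\Theta\cdot p^*}=X$ --- an orbit-density input reflecting the multiplicative independence of $\theta_5$ and $\theta_{13}$. (If $p^*$ happens to be torsion, one iterates the construction on a suitable $\Theta$-invariant subset to produce a non-torsion witness.)

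Once $\varepsilon_0$ is fixed, the remaining steps are routine. For each $\delta>0$, apply Theorem~\ref{thm:quantitative-rationality} with $\varepsilon=\varepsilon_0$ and $N=N_{\varepsilon_0,\delta}$. For any $p\in S$, forward invariance gives $\Theta_N\cdot p\subseteq S$, and hence $\Theta_N\cdot p\cap E_1^*(\varepsilon_0)=\emptyset$. The dichotomy in the theorem then forces $p$ to lie within distance $\delta$ of $\overline{B}_{1/2}^{\mathbb{C}}(r)$ for some torsion point $r\in X$ of order at most $n_{\varepsilon_0}$. Enumerate all torsion points of order at most $n_{\varepsilon_0}$ as $q_1,\dots,q_L$, a finite list. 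Since every $p\in S$ lies within distance $\delta$ of $\bigcup_{j=1}^L\overline{B}_{1/2}^{\mathbb{C}}(q_j)$ for every $\delta>0$, and $S$ is closed, we conclude
$$S\;\subseteq\;\bigcup_{j=1}^L\overline{B}_{1/2}^{\mathbb{C}}(q_j)\;\subseteq\;\bigcup_{j=1}^L B_R^{\mathbb{C}}(q_j)$$
for any $R>1/2$, completing the proof.

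The main obstacle is the orbit-density input in the first step: showing that a closed proper $\Theta$-invariant subset of $X$ must miss an entire strip $E_1^*(\varepsilon_0)$. In Manners's original non-quantitative argument this is the heart of the matter, and is established via topological dynamics on the solenoidal space $X$ using the multiplicative independence of $\theta_5,\theta_{13}$. An alternative would be to extract this orbit-density statement from a soft version of the minor-arcs argument from Section~\ref{s:minor} and then combine it with Theorem~\ref{thm:quantitative-rationality} exactly as above.
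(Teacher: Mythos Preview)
The paper does not give its own proof of this statement; it is quoted as Manners's result from~\cite{Man13} to provide context for the quantitative Theorem~\ref{thm:quantitative-rationality}. So there is no in-paper argument to compare your proposal against. That said, your proposed deduction has a genuine gap.

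The problem is in your first step. You claim that if $p^*\in S$ satisfies $p^*(1)=0$ and $p^*$ is non-torsion, then $\overline{\Theta\cdot p^*}=X$. This is false. Take $p^*=j_{\mathbb C}(i/10)$. Then $p^*(1)=\Re(i/10)=0$, and $p^*$ is non-torsion (the only torsion point of the form $j_{\mathbb C}(z)$ is $z=0$, since torsion points are precisely $j\circ\iota^\Delta(\mathbb Q(i))$). However, because $|\theta_5|=|\theta_{13}|=1$, the entire $\Theta$-orbit of $p^*$ lies on the complex circle $S^{\mathbb C}_{1/10}(0)$, so $\overline{\Theta\cdot p^*}\subseteq\overline{B}^{\mathbb C}_{1/10}(0)\subsetneq X$. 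Consequently $S=\overline{B}^{\mathbb C}_{1/10}(0)$ is a proper, closed, $\times\theta_5,\times\theta_{13}$-invariant subset of $X$ that meets $E_1^*(\varepsilon)$ for every $\varepsilon>0$, so no $\varepsilon_0$ as in your step 2 exists. Your suggested fix of ``iterating to produce a non-torsion witness'' does not help: the witness $j_{\mathbb C}(i/10)$ is already non-torsion.

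This is not an artifact of the example; it reflects a basic structural feature. The $\times\theta_5,\times\theta_{13}$ actions are isometries in the $\mathbb C$-direction, so complex balls around torsion points are themselves invariant. That is exactly why the conclusion of Manners's lemma must allow such balls, and it is also why the dichotomy in Theorem~\ref{thm:quantitative-rationality} cannot be used as a black box to rule them out. To deduce the infinitary lemma from the quantitative machinery here, you would need to go inside the proof of Theorem~\ref{thm:quantitative-rationality} (running the major/minor-arc dichotomy directly on points of $S$ for \emph{all} small $\varepsilon$ simultaneously, using that the minor-arc conclusion gives $\varepsilon$-density of a subset of $S$ for arbitrarily small $\varepsilon$, hence $S=X$), rather than invoking its statement.
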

In summary, Theorem~\ref{thm:quantitative-rationality} bears the same relation to Manners's rationality lemma as Theorem~\ref{thm:quantitative-furstenberg} bears to Furstenberg's $\times 2, \times 3$ Theorem.

\subsection{Quantitative $\times 2, \times 3$}

We now describe the deduction of Theorem~\ref{thm:quantitative-furstenberg} from the main results of \cite{BLMV08}; we will employ a similar strategy for Theorem~\ref{thm:quantitative-rationality}.  We require two inputs from \cite{BLMV08}.  
%For the first input, we need the notion of entropy.  The \emph{entropy} of a finite measurable partition $\mathcal{P}$ of $\mathbb{R}/\mathbb{Z}$ with respect to a measure $\mu$ on $\mathbb{R}/\mathbb{Z}$ is defined to be
%$$H_\mu(\mathcal{P}):=-\sum_{P \in \mathcal{P}}\mu(P) \log (\mu(P))$$
%(with the convention $0 \log (0)=0$).    
The first input is the following corollary of the entropy-based result~\cite[Theorem 1.4]{BLMV08} (see the deduction of Corollary 1.6 from Theorem 1.4 in that paper).  For $M \in \mathbb{N}$, let $\mathcal{P}_M$ denote the partition of $\mathbb{R}/\mathbb{Z}$ into the $M$ equal-length intervals $P_i:=[i/M, (i+1)/M)$.

\begin{theorem}\label{thm:BLMV-entropy}
There is a constant $c>0$ such that the following holds for every $M \in \mathbb{N}$ coprime to $6$, and for every $0<\rho<1$.  If $S \subseteq \mathbb{R}/\mathbb{Z}$ is a set that intersects at least $M^\rho$ parts of the partition $\mathcal{P}_M$, then the set
$$\{2^s 3^t x: s,t \in \mathbb{Z}_{\geq 0},~ 2^s 3^t< M,~ x \in S\}$$
is $(\log M)^{-c\rho}$-dense in $\mathbb{R}/\mathbb{Z}$.
\end{theorem}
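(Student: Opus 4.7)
The plan is to construct, from $S$, a probability measure $\nu$ supported on the orbit set in the theorem that is approximately $\times 2, \times 3$-invariant and has large partition entropy at scale $1/M$, and then invoke BLMV's Theorem~1.4 to deduce that $\nu$ is quantitatively close to Lebesgue measure---from which the density of $\supp(\nu)$ is immediate.

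First, pick $x_1, \ldots, x_k \in S$ lying in $k := \lceil M^\rho \rceil$ distinct parts of $\mathcal{P}_M$, set $\mu := \frac{1}{k}\sum_{i=1}^k \delta_{x_i}$, and note $H_\mu(\mathcal{P}_M) = \log k \geq \rho \log M$. Let $\Xi := \{(s,t) \in \Z_{\geq 0}^2 : 2^s 3^t < M\}$, so $|\Xi| \asymp (\log M)^2$, and define the orbit-averaged measure
$$\nu := \frac{1}{|\Xi|} \sum_{(s,t) \in \Xi} (T_{2^s 3^t})_* \mu, \qquad \text{where} \quad T_q(x) := qx \pmod 1.$$
By construction, $\supp(\nu) \subseteq \{2^s 3^t x : (s,t) \in \Xi,\, x \in S\}$, the set whose density we want to establish.

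Next I would verify that $\nu$ has the two properties needed to feed into BLMV's Theorem~1.4. For near-invariance, a F\o lner-style calculation shows $\|T_2 \nu - \nu\|_{\mathrm{TV}}, \|T_3 \nu - \nu\|_{\mathrm{TV}} = O(1/\log M)$, because shifting $\Xi$ by $(1,0)$ or $(0,1)$ alters only an $O(1/\log M)$-fraction of its elements. For the entropy, since $M$ is coprime to $6$, every $q = 2^s 3^t$ appearing in $\Xi$ has $\gcd(q, M) = 1$, so $T_q^{-1}\mathcal{P}_M$ refines $\mathcal{P}_M$ (up to $O(q)$ boundary sub-intervals), and a standard entropy accounting gives $H_{(T_q)_*\mu}(\mathcal{P}_M) \geq \rho \log M - O(\log q)$; concavity of entropy in the measure then yields $H_\nu(\mathcal{P}_M) \geq \rho \log M - O(\log\log M)$.

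With near-invariance and large entropy in hand, BLMV's Theorem~1.4 (applied as in the deduction of their Corollary~1.6 from their Theorem~1.4) converts $\nu$ into a measure that is $(\log M)^{-c\rho}$-close to Lebesgue in a suitable weak metric such as the Wasserstein $W_1$ distance; this closeness immediately forces $\supp(\nu)$ to be $(\log M)^{-c\rho}$-dense in $\R/\Z$, which gives the theorem. The main obstacle will be invoking the precise quantitative form of BLMV's Theorem~1.4 and tracking the power of $\rho$ in the final exponent---but this amounts to bookkeeping once $\nu$ has been constructed with the correct near-invariance and entropy bounds.
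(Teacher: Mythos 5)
Your overall skeleton --- pick one representative per occupied cell, note that the uniform measure $\mu$ on these representatives has $H_\mu(\mathcal{P}_M)\ge \rho\log M$, feed this into the entropy theorem of \cite{BLMV08}, and detect density by testing against a localized function --- is the route the paper intends (the paper imports this statement by pointing to the deduction of Corollary~1.6 from Theorem~1.4 in \cite{BLMV08}; its own analogue is the deduction of Corollary~\ref{cor:quantitativedenseness} from Theorem~\ref{thm:entropyweirdpartition}). But there is a genuine gap in how you invoke the black box. Theorem~1.4 of \cite{BLMV08} does \emph{not} say that a measure of entropy $\rho\log M$, or its orbit average, becomes $(\log M)^{-c\rho}$-close to Lebesgue; its conclusion (like Theorem~\ref{thm:entropyweirdpartition} here) has the shape $(2^s3^t)_*\mu(f)\ \ge\ (\rho-O(\delta))\,m(f)\ -\ (\text{error})\cdot\|f\|_{\mathrm{Sobolev/Lip}}$ for \emph{some} admissible $(s,t)$. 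The factor $\rho$ is essential: a probability measure with $H_\mu(\mathcal{P}_M)\ge\tfrac12\log M$ may contain a component of mass $\tfrac12$ equal to $\delta_0$, and then every pushforward and every orbit average retains an atom of mass $\tfrac12$ at $0$, so no closeness to Lebesgue (in $W_1$ or any reasonable metric) is available at this level of generality. What survives --- and what both \cite{BLMV08} and this paper actually use --- is positivity: test against a nonnegative smooth bump $f$ of width $(\log M)^{-c\rho}$ centered at an arbitrary $x_0$; then $(\rho-O(\delta))m(f)$ beats the norm-weighted error term, so some $2^s3^tx$ with $x\in S$ lies in the support of $f$. (Even where a $W_1$ bound of size $\eta$ did hold, it would only force density at scale $\sim\sqrt{\eta}$, not $\eta$, so your last step would also need adjusting.)

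The detour through the orbit-averaged measure $\nu$ is moreover both unnecessary and flawed as written. Unnecessary, because the entropy input of \cite{BLMV08} requires no invariance or near-invariance whatsoever --- it applies directly to the arbitrary measure $\mu$ with $H_\mu(\mathcal{P}_M)\ge\rho\log M$, so the F\o lner computation buys nothing. Flawed, because your entropy transfer to $\nu$ is incorrect: $T_q^{-1}\mathcal{P}_M$ does \emph{not} refine $\mathcal{P}_M$ --- a part of $T_q^{-1}\mathcal{P}_M$ is a union of $q$ intervals of length $1/(qM)$ spaced $1/q$ apart, meeting roughly $q$ distinct cells of $\mathcal{P}_M$. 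The correct estimate is $H_{(T_q)_*\mu}(\mathcal{P}_M)=H_\mu(T_q^{-1}\mathcal{P}_M)\ge H_\mu(\mathcal{P}_M)-\log(2q)$, and since most $q=2^s3^t<M$ have $\log q\asymp\log M$, the average loss is a constant multiple of $\log M$, not $O(\log\log M)$; concavity then only yields $H_\nu(\mathcal{P}_M)\ge(\rho-c_0)\log M$. One could manage this loss (e.g.\ by averaging only over $q\le M^{\rho/10}$), but the intended and cleaner fix is to drop $\nu$ entirely, apply the BLMV theorem to $\mu$ itself in its correct ``$(\rho-\delta)m(f)$ minus Sobolev error'' form, and conclude by positivity against a bump, exactly as in the proof of Corollary~\ref{cor:quantitativedenseness}.
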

The second input is the following consequence of Baker's Theorem, as stated in~\cite[Corollary 4.4]{BLMV08}.

\begin{theorem}\label{thm:BLMV-baker}
There is a constant $c>0$ such that the following holds.  Let $a_1<a_2<\cdots$ denote the elements of $\{2^s 3^t: s,t \in \mathbb{Z}_{\geq 0}\}$ in increasing order.  Then
$$a_{k+1}-a_k \leq \frac{a_k}{c (\log a_k)^c}$$
for all integers $k \geq 2$.
\end{theorem}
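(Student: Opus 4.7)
The plan is to deduce the gap bound via continued-fraction approximation, using Baker's theorem on linear forms in the logarithms of $2$ and $3$ to control the partial quotients. Writing $a_k = 2^{s_k} 3^{t_k}$ with $s_k, t_k \geq 0$, the bound $a_{k+1} - a_k \leq a_k/(c(\log a_k)^c)$ is equivalent to producing integers $(u, v) \neq (0, 0)$ with $u \geq -s_k$, $v \geq -t_k$, and
$$
0 \;<\; u \log 2 + v \log 3 \;\leq\; C(\log a_k)^{-c},
$$
since then $2^{s_k + u} 3^{t_k + v}$ is an $S$-unit lying in the interval $(a_k,\, a_k + C a_k (\log a_k)^{-c}]$.

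First, I would analyze the convergents $p_n/q_n$ of $\alpha := \log 2/\log 3$. These alternate in sign relative to $\alpha$ and satisfy $|q_n \log 2 - p_n \log 3| \leq (\log 3)/q_{n+1}$. Baker's theorem on linear forms in two logarithms provides the lower bound
$$
|q \log 2 - p \log 3| \;\geq\; C_0 \max(|p|, |q|)^{-\kappa}
$$
for every nonzero $(p, q) \in \mathbb{Z}^2$, with absolute constants $C_0, \kappa > 0$. Combined with the convergent estimate, this forces $q_{n+1} \leq C_0^{-1} q_n^\kappa$, so the convergent denominators grow at most polynomially. In particular, any two consecutive convergents (which are automatically of opposite signs) have denominators differing by at most a polynomial factor.

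Second, given $a_k$, one has $s_k + t_k = \Theta(\log a_k)$, so at least one of $s_k, t_k$ is of order $\log a_k$; say $s_k$ is the larger coordinate. I would choose the largest convergent $(p_n, q_n)$ with $q_n \leq s_k$. If this convergent has $p_n \log 3 - q_n \log 2 > 0$ (the desired sign), set $(u, v) := (-q_n, p_n)$, which is admissible since $u = -q_n \geq -s_k$ and $v = p_n \geq 0$; the error bound gives $0 < u\log 2 + v \log 3 \leq (\log 3)/q_{n+1}$. If instead the sign is opposite, use the previous convergent $(p_{n-1}, q_{n-1})$ (of the desired sign), whose denominator satisfies $q_{n-1} < q_n \leq s_k$ and whose error is $\leq (\log 3)/q_n \leq C'(\log a_k)^{-1}$. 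Either way, one obtains the desired approximation, yielding a new $S$-unit in the prescribed interval. The symmetric case in which $t_k$ is the larger coordinate is handled by exchanging the roles of $2$ and $3$.

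The main obstacle is the sign-matching step: one must exhibit a convergent of the correct sign at a scale compatible with the constraint $q \leq s_k$. Here Baker's polynomial control on partial quotients is essential, since it rules out pathological continued fractions in which a long streak of very large partial quotients would leave one sign absent from all denominators below $s_k$; the polynomial gap bound ensures that at every scale, convergents of both signs are available within a polynomial factor of that scale. Degenerate cases in which $\min(s_k, t_k)$ is very small cause no extra difficulty, since the argument only uses the larger coordinate.
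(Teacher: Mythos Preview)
The paper does not supply its own proof of this statement; it simply quotes it as \cite[Corollary 4.4]{BLMV08}. Your continued-fraction argument is the standard derivation and is correct in outline.

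One slip worth flagging: in the wrong-sign branch you assert $(\log 3)/q_n \leq C'(\log a_k)^{-1}$, which would require $q_n \gtrsim \log a_k$, but all you know directly is $q_n \leq s_k$. The missing lower bound comes from Baker exactly as you later indicate: since $q_{n+1} > s_k$ and $q_{n+1} \leq C_0^{-1} q_n^{\kappa}$, one gets $q_n \gtrsim s_k^{1/\kappa} \gtrsim (\log a_k)^{1/\kappa}$, hence the error in that branch is only $\lesssim (\log a_k)^{-1/\kappa}$ rather than $(\log a_k)^{-1}$. This still gives a power of $\log a_k$, so the conclusion is unaffected; just replace the exponent $1$ by $1/\kappa$ there.
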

We are ready for the proof of Theorem~\ref{thm:quantitative-furstenberg}.  We take suitable $N_0,N_1<N$ and work with the shorter orbits $\Psi_{N_0} \cdot x, \Psi_{N_0+N_1} \cdot x$ before studying the full orbit $\Psi_N \cdot x$.  Analyzing the shortest orbit $\Psi_{N_0} \cdot x$ lets us distinguish ``major-arc'' and ``minor-arc'' cases.  The longer orbit $\Psi_{N_0+N_1} \cdot x$ figures in an important intermediary conclusion in the minor-arc case before we can harness the power of the full orbit $\Psi_N \cdot x$.

\begin{proof}[Proof of Theorem~\ref{thm:quantitative-furstenberg}]
Let $c$ denote the minimum of the two constants $c$ from Theorems~\ref{thm:BLMV-entropy} and~\ref{thm:BLMV-baker}.  Set $N_0:=\exp\exp(\varepsilon^{-C_1})$, for a constant $C_1>0$ to be determined later, and consider the orbit $\Psi_{N_0} \cdot x$.  The pigeonhole principle provides some $0 \leq s_1,s_2,t_1,t_2 \leq N_0$, with $(s_1,t_1) \neq (s_2,t_2)$, such that $2^{s_1} 3^{t_1} x,2^{s_2}3^{t_2}x$ lie at most $N_0^{-2}$ apart.  Thus $r:=2^{s_1} 3^{t_1} -2^{s_2}3^{t_2}$ is a nonzero integer with $|r| \leq 6^{N_0}$ such that
$$\|rx\|_{\mathbb{R}/\mathbb{Z}} \leq N_0^{-2}.$$
%Dividing by $|2^s 3^t-2^{s'}3^{t'}|$, which is a natural number of size at most $2 \cdot 6^{N_0}$, we find that $x$ lies within $N_0^{-2}$ of some rational $y$ of height $\ll 6^{N_0}$.  We now distinguish three cases depending on the distance from $x$ to $y$: the ``super-major-arc'' case where $\|x-y\|$ is tiny; the ``major-arc'' case where $\|x-y\|$ is small but not tiny; and the ``minor-arc'' case where $\|x-y\|$ is fairly large.
We distinguish two cases according to whether or not $\|rx\|$ is extremely small.
%: the ``super-major-arc'' case where $\|y\|_{\mathbb{R}/\mathbb{Z}}$ is tiny; the ``major-arc'' case where $\|y\|_{\mathbb{R}/\mathbb{Z}}$ is small but not tiny; and the ``minor-arc'' case where $\|y\|_{\mathbb{R}/\mathbb{Z}}$ is fairly large.  %Let $\eta:=\exp(N/\exp\exp(\varepsilon^{-C''}))$ for some $C''>0$ to be determined later.

{\bf Major arcs.} Suppose that $\|rx\| \leq \delta$.  Dividing through by $r$, we find that $x$ lies within $\delta$ of some rational of height at most $6^{N_0}$, as desired.

{\bf Minor arcs.} Suppose that $\|rx\| \geq \delta$.  Theorem~\ref{thm:BLMV-baker} tells us that the set $\Psi_{\log_2(\|rx\|^{-1})}$ is $\frac{\|rx\|^{-1}}{c(\log \|rx\|^{-1})^c}$-dense in the interval $[0,\|rx\|^{-1}]$.  It follows that the orbit $\Psi_{N_1} \cdot rx$ is $\eta$-dense in $\R/\Z$, where we have set $$N_1:=\log_2(\|rx\|^{-1}) \quad \text{and} \quad \eta:=\frac{1}{c(\log \|rx\|^{-1})^c}.$$  Our bounds $\delta \leq \|rx\| \leq N_0^{-2}$ give $\log_2(\|rx\|^{-1}) \leq \log_2(\delta^{-1})$ and $\eta \leq \frac{1}{c(2 \log N_0)^c}$.

% the set
% $$\{2^s 3^t: 0 \leq s,t \leq 2 \log(\|y\|_{\mathbb{R}/\mathbb{Z}}^{-1})\}$$
% is 
% %$\frac{\exp \exp(\varepsilon^{-C})}{\exp((c/2)\varepsilon^{-C})}$
% $\frac{1+\|y\|_{\mathbb{R}/\mathbb{Z}}^{-1}}{c (\log \|y\|_{\mathbb{R}/\mathbb{Z}}^{-1})^c}$-dense in the interval $[0,\|y\|_{\mathbb{R}/\mathbb{Z}}^{-1}]$.  It follows that
% $$\{2^s 3^t y: 0 \leq s,t \leq 2 \log(\|y\|_{\mathbb{R}/\mathbb{Z}}^{-1})\}$$
% is  $\frac{(1+\|y\|_{\mathbb{R}/\mathbb{Z}}^{-1})\|y\|_{\mathbb{R}/\mathbb{Z}}}{c (\log \|y\|_{\mathbb{R}/\mathbb{Z}}^{-1})^c}$-dense in $\mathbb{R}/\mathbb{Z}$.  Due to our upper bound $\|y\|_{\mathbb{R}/\mathbb{Z}} \leq N_0^{-2}$, we have
% $$\frac{(1+\|y\|_{\mathbb{R}/\mathbb{Z}}^{-1})\|y\|_{\mathbb{R}/\mathbb{Z}}}{c (\log \|y\|_{\mathbb{R}/\mathbb{Z}}^{-1})^c} \leq \frac{2}{c (2 \log N_0)^c} \leq \exp(-\varepsilon^{-C/2})$$
% as long as $C$ is sufficiently large relative to $c$.

Next, set $M:=\lfloor \eta^{-1} \rfloor$.  The key observation is that, by the above considerations, the difference set
$$(\Psi_{N_0+N_1} \cdot x)-(\Psi_{N_0+N_1} \cdot x) \supseteq \Psi_{N_1} \cdot rx$$
intersects every part of the partition $\mathcal{P}_M$.  Let
$$Q:=\{m \in [1,M]: (\Psi_{N_0+N_1} \cdot x) \cap P_m \neq \emptyset\}$$
index the parts of $\mathcal{P}_M$ occupied by elements of $\Psi_{N_0+N_1} \cdot x$.  Since $P_m-P_{m'}$ intersects only two parts of the partition $\mathcal{P}_M$ for each choice of $m,m'$, we deduce that $|Q| \gg M^{1/2}$, say, $|Q| \geq M^{0.49}$.  We can now apply Theorem~\ref{thm:BLMV-entropy} to conclude that $\Psi_{N_0+N_1+\log_2(M)} \cdot x$
is $(\log M)^{-0.49c}$-dense in $\mathbb{R}/\mathbb{Z}$.  We conclude on noting that $(\log M)^{-0.49c}\leq \varepsilon$ as long as $C_1$ is sufficiently large relative to $c$, and that
$$N_0+N_1+\log_2(M) \leq \exp\exp(\varepsilon^{-C})+\log_2(\delta^{-1}),$$
for $C$ sufficiently large relative to $C_1$.
\end{proof}

This argument requires $N$ to be at least double-exponential in $\varepsilon^{-C}$.  One exponential is due to the entropy argument in the proof of Theorem~\ref{thm:BLMV-entropy}, and the other is due to the logarithm in the Baker-type result Theorem~\ref{thm:BLMV-baker}.  %In our proof of Theorem~\ref{thm:quantitative-rationality}, we will still lose one exponential in our entropy-based arguments, but we will be able to replace the Baker-type result with a more efficient ``approximation theorem'' of Manners (see Theorem~\ref{thm:mannersapproximation} below).  As a result, in Theorem~\ref{thm:quantitative-rationality} we can take $N$ to be double-exponential rather than triple-exponential.

\subsection{Proof outline for Theorem~\ref{thm:quantitative-rationality}}  Our proof of Theorem~\ref{thm:quantitative-rationality} has the same overall shape as the proof of Theorem~\ref{thm:quantitative-furstenberg}, but the details differ throughout because the space $X$ is more complicated than $\mathbb{R}/\mathbb{Z}$.  Here we will sketch the main steps without carefully tracking quantitative dependences.

Set $N_0:=\exp\exp(\varepsilon^{-C_1})$, for a suitable constant $C_1>0$, and consider the orbit $\Theta_{N_0} \cdot p$.  Pigeonholing produces some nonzero $r=\theta_5^{s_1}\theta_{13}^{t_1}-
\theta_5^{s_2}\theta_{13}^{t_2}$ with $0 \leq s_1,s_2,t_1,t_2$ such that
$$\dist(rp,0) \ll N_0^{-1/2}.$$
We condition on whether $rp$ is extremely close to $B^{\C}_1(0)$ or merely fairly close.

{\bf Major arcs.}  Suppose that $rp$ in fact lies extremely close to $B^{\C}_1(0)$ (say, within distance $\exp(-\exp(200C_2 N_0))$, for another constant $C_2>0$).  Since $r$ is a Gaussian rational of height at most $\exp(100N_0)$, clearing denominators produces a torsion point $q \in X$ of order at most $\exp(200N_0)$ such that $p$ lies extremely close to $B^{\C}_{\exp(200N_0)}(q)$.  In contrast to the $\times 2, \times 3$ setting, this conclusion is too weak because we want a complex ball of radius $1/2$ rather than of radius $\exp(200N_0)$.  Suppose that $p$ lies extremely close to the complex circle of radius $R$ around $q$.  A new element of our work is showing that if $R\geq 1/2$, then geometric considerations (see Lemma~\ref{lem:sphereintersection}) force $\Theta_N \cdot p$ to intersect $E_1^*(\varepsilon)$.\footnote{We emphasize that this orbit will not be $\varepsilon$-dense in $X$ if $R$ is of only moderate size.  If we wanted to take $\varepsilon$-density of $\Theta_N \cdot p$ in $X$ as the first possible outcome of Theorem~\ref{thm:quantitative-rationality}, then in the second possible outcome we would need to allow the the radius of the complex ball to grow with $\varepsilon^{-1}$; we will not pursue this direction further.}

{\bf Minor arcs.} Suppose instead that $rp$ is not extremely close to $B^{\C}_1(0)$ (say, distance at least $\exp(-\exp(200C_2 N_0))$).  Using a quantitative version of Manners's analysis of so-called ``non-archimedean limit points'' of $X$ (see Theorem~\ref{thm:mannersapproximation}), we deduce that
$$(\Theta_{N_0+N_1} \cdot p)-(\Theta_{N_0+N_1} \cdot p)$$
is $\eta$-dense in $X$, for $\eta:=\exp(-\varepsilon^{-C_2})$ and $N_1:=2\exp(200C_2 N_0)$.\footnote{Our Theorem~\ref{thm:mannersapproximation} plays the role that Baker's Theorem (via Theorem~\ref{thm:BLMV-baker}) played in the analogous part of the proof of Theorem~\ref{thm:quantitative-furstenberg}.  Baker's Theorem does still appear, however, in our analysis of the major-arc case and in our deduction of Theorem~\ref{thm:main} from Theorem~\ref{thm:quantitative-rationality}.}

We now arrive at a delicate point.  In the $\times 2, \times 3$ setting, there was an obvious way to partition $\mathbb{R}/\mathbb{Z}$ into equal-length intervals.  The space $X$ is not so symmetric, however, and the $\times \theta_5, \times \theta_{13}$ actions have different ``stretching'' effects on the $\mathbb{C}, \mathbb{Q}_5, \mathbb{Q}_{13}$-components of $X$.  To make the subsequent entropy machinery work, we must construct a highly ``skewed'' partition $\mathcal{P}$ (see Proposition~\ref{prop:constructionofpartition} and Theorem~\ref{thm:entropyweirdpartition}) such that each part contains an $\eta$-ball and has rather particular width in each of the $\mathbb{C}, \mathbb{Q}_5, \mathbb{Q}_{13}$-directions.

As in the proof of Theorem~\ref{thm:quantitative-furstenberg}, we deduce that $\Theta_{N_0+N_1} \cdot p$ occupies $\gg |\mathcal{P}|^{1/2}$ parts of the partition $\mathcal{P}$.  This puts us in a position to apply our analogue of Theorem~\ref{thm:BLMV-entropy} (see Corollary~\ref{cor:quantitativedenseness}), whose proof requires adapting the machinery of~\cite{BLMV08} from $\mathbb{R}/\mathbb{Z}$ to $X$; this step is fairly involved.  The upshot is that $\Theta_{N_0+2N_1} \cdot p$ is $\varepsilon$-dense in $X$, and in particular intersects $E^*_1(\varepsilon)$.  This completes our sketch proof of Theorem~\ref{thm:quantitative-rationality} with $N:=N_0+2N_1$. 

\subsection{Back to pyjama stripes}
To conclude this overview, we sketch the deduction of Theorem~\ref{thm:main} from Theorem~\ref{thm:quantitative-rationality}.  This part of the argument follows~\cite{Man13} quite closely.  Theorem~\ref{thm:quantitative-rationality}, applied with some $\delta$ much smaller than $\varepsilon$, tells us that $\cup_{\theta \in \Theta_N} E^*_{\theta}(\varepsilon)$ covers all of $X=\widehat{A}$ with the possible exception of some complex balls around torsion points of small order.  Unraveling definitions, we find that $\cup_{\theta \in \Theta_N} E_{\theta}(\varepsilon)$ covers all of $\mathbb{C}$ with the possible exception of some small balls around the elements of the (near-)lattices $\frac{D}{m}\mathbb{Z}[i] \setminus \mathbb{Z}[i]$, for a large Gaussian integer $D$ and small natural numbers $m$ (see Lemma~\ref{lem:lattice-points}).  These small gaps reflect the arithmetic obstructions that arise when one uses only rational rotations (recall that $\theta_5,\theta_{13} \in \mathbb{Q}(i)$).  To fill in the remaining gaps, we replace $\Theta_N$ by the union of several carefully-chosen irrational rotations of $\Theta_N$.

\section{The ring $A$ and the space $X$}\label{s:notation}

Before we dive into the proof proper, we establish several algebraic properties of the ring $A$ and several geometric properties of the space $X$.  Section~\ref{sec:X-setup} contains a careful geometric description of $X$, and Section~\ref{sec:fundamental-domains} constructs fundamental domains of certain quotients of $X$ which play a central role in our later entropy arguments.  We suggest skimming the other two subsections (on the structure of finite quotients of $A$ and some Fourier analysis on $X$) on a first reading and referring back to them as necessary.

%We begin with some notation and setup, following \cite{Man13}.

\subsection{The group structure of $(\theta_5^{-n}A)/A$}

Recall that $A:=\mathbb{Z}[i][1/\overline{P_5}, 1/\overline{P_{13}}]$.  We first show that $(\theta_5^{-n}A)/A$ is isomorphic to $\mathbb{Z}/5^n\mathbb{Z}$.

\begin{lemma}\label{lem:equivalence}
Let $n$ be a nonnegative integer.
\begin{enumerate}
    \item We have $P_5^{-n}\Z[i]+A=\theta_5^{-n}A+A$, i.e., $(P_5^{-n}\mathbb{Z}[i])/A = (\theta_5^{-n}A)/A$.
    \item We have $(P_5^{-n}\mathbb{Z}[i])/A \cong \mathbb{Z}[i]/P_5^{n}\mathbb{Z}[i] \cong \mathbb{Z}/5^n\mathbb{Z}$.  Moreover, the latter isomorphism can be chosen to intertwine the $\times P_5^\ell$ and $\times 5^\ell$ maps for all integers $0 \leq \ell \leq n$.
\end{enumerate}
\end{lemma}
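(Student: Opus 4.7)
The plan is to prove the two parts in succession, leveraging the UFD structure of $\mathbb{Z}[i]$ throughout.

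For part (i), I would first reduce $\theta_5^{-n}A$ to $P_5^{-n}A$. Since $1/\overline{P_5} \in A$, the element $\overline{P_5}$ is a unit in $A$, so $\theta_5^{-n}A = \overline{P_5}^n P_5^{-n}A = P_5^{-n}A$. It then suffices to show $P_5^{-n}A + A = P_5^{-n}\mathbb{Z}[i] + A$. The containment $\supseteq$ is immediate. For $\subseteq$, I would take an arbitrary element $a/(P_5^n \overline{P_5}^j \overline{P_{13}}^k) \in P_5^{-n}A$ with $a \in \mathbb{Z}[i]$ and $j,k \geq 0$; since $P_5, \overline{P_5}, \overline{P_{13}}$ are pairwise non-associate Gaussian primes, $P_5^n$ is coprime to $\overline{P_5}^j \overline{P_{13}}^k$ in $\mathbb{Z}[i]$, and Bezout produces $u,v \in \mathbb{Z}[i]$ with $uP_5^n + v\overline{P_5}^j\overline{P_{13}}^k = a$; dividing by $P_5^n \overline{P_5}^j \overline{P_{13}}^k$ then splits the element as $u/(\overline{P_5}^j\overline{P_{13}}^k) + v/P_5^n \in A + P_5^{-n}\mathbb{Z}[i]$.

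For part (ii), I would build the composite isomorphism in two steps. First, to obtain $(P_5^{-n}\mathbb{Z}[i])/A \cong \mathbb{Z}[i]/P_5^n\mathbb{Z}[i]$, I would compute $P_5^{-n}\mathbb{Z}[i] \cap A$: for $z/P_5^n \in A$ with $z \in \mathbb{Z}[i]$, writing $z/P_5^n = a/(\overline{P_5}^j\overline{P_{13}}^k)$ and clearing denominators gives $z\overline{P_5}^j\overline{P_{13}}^k = aP_5^n$, and unique factorization forces $P_5^n \mid z$, so $z/P_5^n \in \mathbb{Z}[i]$. Hence $P_5^{-n}\mathbb{Z}[i] \cap A = \mathbb{Z}[i]$, and the second isomorphism theorem combined with multiplication by $P_5^n$ yields $(P_5^{-n}\mathbb{Z}[i])/A \cong P_5^{-n}\mathbb{Z}[i]/\mathbb{Z}[i] \cong \mathbb{Z}[i]/P_5^n\mathbb{Z}[i]$.

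Second, for $\mathbb{Z}[i]/P_5^n\mathbb{Z}[i] \cong \mathbb{Z}/5^n\mathbb{Z}$, I would analyze the canonical ring homomorphism $\mathbb{Z} \to \mathbb{Z}[i] \to \mathbb{Z}[i]/P_5^n\mathbb{Z}[i]$. The kernel is $5^n\mathbb{Z}$: any $m \in \mathbb{Z} \cap P_5^n\mathbb{Z}[i]$ is also in $\overline{P_5}^n\mathbb{Z}[i]$ by complex conjugation, so unique factorization in $\mathbb{Z}[i]$ yields $m \in (P_5\overline{P_5})^n\mathbb{Z}[i] = 5^n\mathbb{Z}[i]$, hence $m \in 5^n\mathbb{Z}$; the reverse containment follows from $5^n = P_5^n\overline{P_5}^n$. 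For surjectivity I would inductively produce integers $m_n$ with $i \equiv m_n \pmod{P_5^n}$: the base case $m_1 = 2$ holds because $i - 2 = i \cdot P_5$. For the inductive step, writing $i - m_{n-1} = P_5^{n-1}z$ and seeking $m_n = m_{n-1} + 5^{n-1}k$, the substitution $5^{n-1} = P_5^{n-1}\overline{P_5}^{n-1}$ reduces the condition $P_5^n \mid (i - m_n)$ to $\overline{P_5}^{n-1}k \equiv z \pmod{P_5}$, which admits an integer solution $k$ because $\overline{P_5}^{n-1}$ is a unit in $\mathbb{Z}[i]/P_5 \cong \mathbb{F}_5$ and $\mathbb{Z}$ surjects onto $\mathbb{F}_5$ by the base case.

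For the intertwining compatibility, the key observation is that the second isomorphism is naturally a \emph{ring} isomorphism, and under it the principal ideal $5^\ell\mathbb{Z}/5^n\mathbb{Z}$ corresponds to $5^\ell\mathbb{Z}[i]/P_5^n\mathbb{Z}[i]$, which coincides with $P_5^\ell\mathbb{Z}[i]/P_5^n\mathbb{Z}[i]$ because $\overline{P_5}^\ell$ is a unit modulo $P_5^n$. Consequently the images and kernels of $\times 5^\ell$ and $\times P_5^\ell$ are identified under the iso for every $0 \leq \ell \leq n$. I expect the main technical obstacle to be the Hensel-type induction carrying out the surjectivity step, which crucially leverages the factorization $5 = P_5\overline{P_5}$ together with the residue field identification $\mathbb{Z}[i]/P_5 \cong \mathbb{F}_5$.
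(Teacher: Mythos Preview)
Your arguments for part~(i) and for the two isomorphisms in part~(ii) are correct. For~(i), noting that $\overline{P_5}$ is already a unit in $A$ so that $\theta_5^{-n}A=P_5^{-n}A$ outright, and then splitting an arbitrary element via a B\'ezout identity, is slightly slicker than the paper's explicit construction of a correcting element $c$. Your Hensel lift for the surjectivity of $\Z\to\Z[i]/P_5^n\Z[i]$ is fine, though more than necessary: once the kernel is identified as $5^n\Z$, the cardinality $|\Z[i]/P_5^n\Z[i]|=N(P_5)^n=5^n$ already forces surjectivity.

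The one genuine gap is the intertwining step. You show that under the ring isomorphism the ideal $5^\ell\Z/5^n\Z$ corresponds to $P_5^\ell\Z[i]/P_5^n\Z[i]$, so that $\times 5^\ell$ and $\times P_5^\ell$ have the same image and kernel. That is strictly weaker than the isomorphism conjugating one map to the other, and in fact no group isomorphism $\Z[i]/P_5^n\Z[i]\to\Z/5^n\Z$ can literally intertwine $\times P_5$ with $\times 5$ once $n\ge 2$: any such isomorphism differs from the canonical one by multiplication by a unit, which commutes with everything, so intertwining would force $P_5\equiv 5\pmod{P_5^n}$; but $5-P_5=P_5(\overline{P_5}-1)=-2iP_5$ has $P_5$-adic valuation exactly~$1$. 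The paper's own proof glosses over this with ``obvious'', and what the downstream application (Lemma~\ref{lem:partitiontrick}) actually requires is precisely the filtration correspondence you establish. So your argument is not wrong, but you should state explicitly that it yields the matching of the subgroup chains $\{P_5^\ell\Z[i]/P_5^n\Z[i]\}_\ell\leftrightarrow\{5^\ell\Z/5^n\Z\}_\ell$, rather than literal intertwining of the multiplication maps.
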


\begin{proof}
We start with part (i).  From $$P_5^{-n}\Z[i]=\theta_5^{-n} \overline{P_5}^n \Z[i] \subseteq \theta_5^{-n}\Z[i] \subseteq \theta_5^{-n}A$$ we see immediately that $P_5^{-n}\Z[i]+A \subseteq \theta_5^{-n}A+A$.  It remains to show the reverse inclusion, namely, that $\theta_5^{-n}a \in P_5^{-n}\Z[i]+A$ for any $a \in A$.  Write $$a=\overline{P_5}^{k} \overline{P_{13}}^m b,$$ where $b \in \Z[i]$ is coprime to $\overline{P_5}, \overline{P_{13}}$ and $k,m \in \Z$.  We wish to find $c \in A$ such that $\theta_5^{-n}a+c \in P_5^{-n}\Z[i]$, i.e.
$$\overline{P_5}^{k+n} \overline{P_{13}}^m b+P_5^n c \in \Z[i].$$
Take $c$ to be of the form $c=\overline{P_5}^{k+n} \overline{P_{13}}^m d$, for $d \in \Z[i]$; we wish to choose $d$ such that
$$\overline{P_5}^{k+n} \overline{P_{13}}^m(b+P_5^n d) \in \Z[i].$$
Since $P_5$ is coprime to $\overline{P_5}, \overline{P_{13}}$, we can choose $d \in \Z[i]$ so that $b+P_5^n d$ is divisible by an arbitrarily large power of $\overline{P_5}\overline{P_{13}}$.  In particular, we can ensure that $\overline{P_5}^{k+n} \overline{P_{13}}^m(b+P_5^n d) \in \Z[i]$, as desired.

We now turn to part (ii).  The first isomorphism is given by $$P_5^{-n}x+A \mapsto x+P_5^n \Z[i]$$ for $x \in \Z[i]$.  The second isomorphism is classical and follows from the observation that
$$\Z[i], 1+\Z[i], 2+\Z[i], \ldots, (5^n-1)+\Z[i]$$
is a complete set of representatives for $\Z[i]/P_5^n \Z[i]$; the desired isomorphism maps $$m+\Z[i] \mapsto m+5^n \Z$$ for $0 \leq m \leq 5^n-1$.  The intertwining of the multiplication maps is obvious.
%It suffices to show that each $A$-equivalence class is represented in a $\mathbb{Z}[i]$-equivalence class. Let $a$ be a Gaussian integer. Consider $\frac{a}{P_5^n b} \in A\theta_5^{-n}/A$. We wish to construct some Gaussian integer $k$ such that
%$$\frac{a}{ P_5^{n} b} + \frac{k}{b} \in \mathbb{Z}[i]P_5^{-n}/A.$$
%Clearing denominators, we have
%$$\frac{a  + P_5^n k}{P_5^n b} \in \mathbb{Z}[i]P_5^{-n}/A.$$
%We select $k$ such that $a + P_5^nk \equiv 0 \pmod{b}.$ This can be done since $P_5$ is relatively prime to $b$ since $P_5$ is relatively prime to $\overline{P_5}$ and $\overline{P_{13}}$. 
%To realize the isomorphisms, we see that each equivalence class $a$ of $\mathbb{Z}[i]/P_5^n\mathbb{Z}[i]$ is represented precisely once in $\mathbb{Z}[i]P_5^{-n}/A$ as $\frac{a}{P_5^n}$. (Note once again that $P_5, \overline{P_5}$, and $\overline{P_{13}}$ are relatively prime.) In addition, $\mathbb{Z}[i]/P_5^{n}\mathbb{Z}[i] \cong \mathbb{Z}/5^n\mathbb{Z}$ is classical via taking $\{1, \dots, 5^n\}$ as an representative of equivalence classes in the left hand side.
\end{proof}

Let $S_{13}(n)$ denote the multiplicative subgroup of $A/\theta_5^n A$ generated by $\theta_{13}$.  The following lemma is a manifestation of the ``lifting the exponent'' trick.

\begin{lemma}\label{lem:boundedsubgroup}
There is a nonnegative integer $\alpha$ (independent of $n$) such that
$$S_{13}(n) \supseteq 1 + \theta_5^{\alpha} (A/\theta_5^nA) = 1 + P_5^\alpha (A/P_5^n A)$$
for all nonnegative integers $n$.
\end{lemma}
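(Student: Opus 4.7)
The plan is to reduce the claim to the classical structure of $(\mathbb{Z}/5^n\mathbb{Z})^\times$ via Lemma~\ref{lem:equivalence}. Since $\overline{P_5}$ is a unit in $A$, we have $\theta_5^n A = P_5^n A$, and Lemma~\ref{lem:equivalence}(ii) supplies a ring isomorphism $A/\theta_5^n A \cong \mathbb{Z}/5^n\mathbb{Z}$ that sends the ideal $\theta_5^\alpha(A/\theta_5^n A)$ to $5^\alpha(\mathbb{Z}/5^n\mathbb{Z})$. Under this identification, $S_{13}(n)$ is the cyclic subgroup of $(\mathbb{Z}/5^n\mathbb{Z})^\times$ generated by the image $u_n$ of $\theta_{13}$, and these images assemble into a single $5$-adic unit $u \in \mathbb{Z}_5^\times$. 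It therefore suffices to exhibit a constant $\alpha$, independent of $n$, such that $\langle u_n \rangle \supseteq 1 + 5^\alpha(\mathbb{Z}/5^n\mathbb{Z})$ for all $n$.

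Let $d \in \{1,2,4\}$ be the order of $u$ in $(\mathbb{Z}/5\mathbb{Z})^\times$, so $u^d \equiv 1 \pmod{5}$, and let $\alpha \geq 1$ be the largest integer with $5^\alpha \mid u^d - 1$ in $\mathbb{Z}_5$.  This $\alpha$ is finite and depends only on $\theta_{13}$: indeed, $\theta_{13}^d \neq 1$ in $A$, since a direct computation gives $\theta_{13}=(-5+12i)/13 \in \mathbb{Q}(i)$, and the only roots of unity in $\mathbb{Q}(i)$ are $\pm 1, \pm i$, none of which equals $\theta_{13}$.  Writing $u^d=1+5^\alpha v$ with $v \in \mathbb{Z}_5^\times$, the standard odd-prime congruence
\[
(1+5^\alpha v)^{5^k} \equiv 1+5^{\alpha+k}v \pmod{5^{\alpha+k+1}},
\]
proved by a short binomial induction on $k\geq 0$, shows that $u^d$ has multiplicative order exactly $5^{n-\alpha}$ in $(\mathbb{Z}/5^n\mathbb{Z})^\times$ whenever $n \geq \alpha$.

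To finish, observe that $1+5^\alpha(\mathbb{Z}/5^n\mathbb{Z})$ is itself cyclic of order $5^{n-\alpha}$ and contains $\langle u^d \rangle$, so the two subgroups coincide.  Translating back through the isomorphism gives $S_{13}(n) \supseteq \langle \theta_{13}^d \rangle = 1+\theta_5^\alpha(A/\theta_5^n A)$, as required.  (For $n<\alpha$ the inclusion is trivial because the right-hand side reduces to $\{1\}$.)  The argument is essentially bookkeeping once one sets up the ring isomorphism correctly; the main thing to track is the ideal correspondence $\theta_5^\alpha(A/\theta_5^n A) \leftrightarrow 5^\alpha(\mathbb{Z}/5^n\mathbb{Z})$, after which the lifting-the-exponent calculation is routine for any odd prime.
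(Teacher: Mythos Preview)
Your proof is correct and takes essentially the same lifting-the-exponent approach as the paper: both pass to a power $\theta_{13}^d$ (your $d$, the paper's $k$) lying in $1+P_5(A/P_5^nA)$, set $\alpha=v_{P_5}(\theta_{13}^d-1)$, and conclude $\langle\theta_{13}^d\rangle=1+P_5^\alpha(A/P_5^nA)$; the only cosmetic difference is that you transfer to $\mathbb{Z}/5^n\mathbb{Z}$ and invoke the cyclicity of $(\mathbb{Z}/5^n\mathbb{Z})^\times$ directly, whereas the paper stays in $A/P_5^nA$ and phrases the same computation via the $p$-adic exponential and logarithm. One small note: Lemma~\ref{lem:equivalence}(ii) as stated gives $\mathbb{Z}[i]/P_5^n\mathbb{Z}[i]\cong\mathbb{Z}/5^n\mathbb{Z}$ rather than $A/P_5^nA\cong\mathbb{Z}/5^n\mathbb{Z}$, so you are tacitly using the (standard) localization fact that the natural map $\mathbb{Z}[i]/P_5^n\mathbb{Z}[i]\to A/P_5^nA$ is a ring isomorphism.
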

\begin{proof}
The exponential and logarithm maps give an isomorphism between the groups $(1 + P_5(A/P_5^nA), \times)$ and $(P_5(A/P_5^nA), +)$. It follows from part (ii) of Lemma~\ref{lem:equivalence} that every additive subgroup of $(P_5(A/P_5^nA), +)$ is of the form $(P_5^\ell(A/P_5^n A), +)$ for some $\ell \ge 1$; the corresponding multiplicative subgroup of $(1 + P_5(A/P_5^nA), \times)$ is $(1 + P_5^\ell(A/P_5^nA), \times)$.

Let $k$ denote the multiplicative order of $P_{13}$ modulo $P_5 \Z[i]$, and let $S'_{13}(n)$ denote the multiplicative subgroup of $S_{13}(n)$ generated by $\theta_{13}^k$.  The choice of $k$ guarantees that $S'_{13}(n)$ is contained in $(1 + P_5(A/P_5^nA), \times)$ and hence is equal to $(1 + P_5^\ell(A/P_5^nA), \times)$ for some $\ell$ (possibly depending on $n$).  Now set $\alpha$ to be $v_{P_5}(\theta_{13}^k - 1)$ (the largest $m$ such that $P_5^m$ divides $\theta_{13}^k - 1$), so that $\theta_{13}^k \notin 1+ P_5^{\alpha+1}(A/P_5^n A)$.  It follows that $\ell \leq \alpha$.
%Now take $k$ to be the order of $\theta_{13}$ modulo $P_5$. Then the multiplicative group generated by $\theta_{13}^k$ clearly lies inside $(1 + P_5(A/P_5^nA), \times)$ and thus contains a subgroup of the form $(P_5^\ell(A/P_5^n A), +)$. Taking $\alpha = v_{P_5}(\theta_{13}^k - 1)$ completes the proof.
\end{proof}

The following lemma is the analogue for the ring $A$ of the natural isomorphism between the dual group of $(N^{-1}\Z)/\Z$ and the group $\Z/NZ$.

\begin{lemma}\label{lem:dual}
The dual group of $(\theta_5^{-n}A)/A$ is isomorphic to $A/\theta_5^nA$ via the map $\psi$ sending the element $\eta \in A/\theta_5^nA$ to the character $$x \mapsto \{\iota_{\mathbb{Q}_5}(\eta \cdot x)\}_5 + \{\iota_{\mathbb{Q}_{13}}(\eta \cdot x)\}_{13} - \Re(\eta x) \pmod{1}$$
(for $x \in A\theta_5^{-n}/A$).
\end{lemma}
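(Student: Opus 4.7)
The plan is to check in turn that the given formula defines a well-defined character of $(\theta_5^{-n}A)/A$ for each $\eta\in A$, that the resulting assignment is additive in $\eta$ and descends to a homomorphism $\psi\colon A/\theta_5^n A\to\widehat{(\theta_5^{-n}A)/A}$, and that $\psi$ is an isomorphism. The isomorphism will then follow from injectivity combined with the order count $|A/\theta_5^n A|=|(\theta_5^{-n}A)/A|=5^n$ provided by Lemma~\ref{lem:equivalence}.

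For well-definedness, I need $\psi(\eta)(a)\equiv 0\pmod 1$ for every $a\in A$ and $\eta\in A$. Since $\eta a\in A$, this reduces to the identity
\[
\{\iota_{\mathbb{Q}_5}(z)\}_5+\{\iota_{\mathbb{Q}_{13}}(z)\}_{13}-\Re(z)\equiv 0\pmod 1\qquad\text{for all }z\in A,
\]
which is the product formula for the number field $\mathbb{Q}(i)$ specialized to the subring $A$: the sum over all places of the local components of the standard self-dual additive character vanishes modulo $1$ on $\mathbb{Q}(i)$. Since every element of $A=\mathbb{Z}[i][1/\overline{P_5},1/\overline{P_{13}}]$ is integral at every finite place other than $\overline{P_5}$ and $\overline{P_{13}}$, only these two finite places and the archimedean place contribute. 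Additivity of $\psi(\eta)$ in $\eta$ is clear from the $\mathbb{Q}$-linearity of $\iota_{\mathbb{Q}_p}$ and the $\mathbb{R}$-linearity of $\Re$; for descent to $A/\theta_5^n A$ I observe that if $\eta\in\theta_5^n A$ and $x\in\theta_5^{-n}A$ then $\eta x\in A$, so $\psi(\eta)(x)\equiv 0\pmod 1$ by the product formula.

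For injectivity, I use that $(\theta_5^{-n}A)/A$ is cyclic of order $5^n$ with generator $\theta_5^{-n}=\overline{P_5}^n/P_5^n$, so $\psi(\eta)$ is trivial iff $\psi(\eta)(\theta_5^{-n})\equiv 0\pmod 1$. Representing $\eta$ by an element of $\mathbb{Z}[i]$ (possible by Lemma~\ref{lem:equivalence}(i)), the product $\eta\theta_5^{-n}=\eta\overline{P_5}^{2n}/5^n$ has nonnegative valuations at both $\overline{P_5}$ and $\overline{P_{13}}$, so the first two terms of the formula vanish and
\[
\psi(\eta)(\theta_5^{-n})\equiv -\Re(\eta\overline{P_5}^{2n})/5^n\pmod 1.
\]
Since both source and target of $\psi$ are cyclic of order $5^n$, injectivity reduces to exhibiting a single element of order $5^n$ in the image. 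Taking $\eta=1$ gives the candidate $-\Re(\overline{P_5}^{2n})/5^n$, which has order exactly $5^n$ in $\mathbb{R}/\mathbb{Z}$ provided $\Re(\overline{P_5}^{2n})\not\equiv 0\pmod 5$. This follows by reducing $\overline{P_5}^2=-3-4i\equiv 2+i\pmod 5$ in $\mathbb{Z}[i]/5\mathbb{Z}[i]\cong\mathbb{F}_5\times\mathbb{F}_5$ (via the splitting $5=P_5\overline{P_5}$) and explicitly computing the real part of $(2+i)^n$ via the Chinese Remainder Theorem.

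The subtlest ingredient is the product formula identity used for well-definedness, which encodes the fact that $A$ embeds as a self-dual lattice in $\mathbb{Q}_5\times\mathbb{Q}_{13}\times\mathbb{C}$ and underlies the entire Pontryagin duality picture for the space $X$. The remaining steps are direct verifications, leveraging the structural isomorphisms of Lemma~\ref{lem:equivalence}.
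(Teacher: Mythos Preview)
Your proposal is correct, and the well-definedness and descent arguments match the paper's exactly (the paper cites \cite[Proposition 6.2]{Man13} for the product-formula identity you invoke). The injectivity argument, however, takes a genuinely different route. The paper observes that the product formula is an \emph{if and only if}: the expression $\{\iota_{\mathbb{Q}_5}(a)\}_5+\{\iota_{\mathbb{Q}_{13}}(a)\}_{13}-\Re(a)$ lies in $\mathbb{Z}$ precisely when $a\in A$. So if $\psi(\eta)=0$, evaluating at $\theta_5^{-n}$ immediately forces $\eta\theta_5^{-n}\in A$, i.e.\ $\eta\in\theta_5^nA$, and injectivity is done in one line. You instead use only the ``if'' direction and compensate by an explicit computation: reduce to $\mathbb{Z}[i]$-representatives, show the non-archimedean fractional parts vanish, and then verify directly that $\Re(\overline{P_5}^{2n})\not\equiv 0\pmod 5$ so that $\psi(1)$ has full order. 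This works, but note two small points: Lemma~\ref{lem:equivalence}(i) concerns $(\theta_5^{-n}A)/A$, not $A/\theta_5^nA$, so the $\mathbb{Z}[i]$-representative claim requires the additional observation that multiplication by $\theta_5^{-n}$ identifies these two quotients; and your final nonvanishing claim is more cleanly seen via $2\Re(\overline{P_5}^{2n})=\overline{P_5}^{2n}+P_5^{2n}$, which cannot be divisible by $P_5$ since $P_5,\overline{P_5}$ are coprime. The paper's route is shorter because it exploits the converse of the very identity already needed for well-definedness.
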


\begin{proof}
%Recall the isomorphism $\Z/5^n\Z \to \widehat{(5^{-n}\Z)/\Z}$ given by
%$$y+\Z \mapsto (z+5^n\Z \mapsto yz \pmod{1})$$
%for $y \in 5^{-n}\Z$ and $z \in \Z$; this is well-defined because $(y+\Z)(z+5^n\Z)=yz+\Z$.  In Lemma~\ref{lem:equivalence} we exhibited isomorphisms from $(\theta_5^{-n}A)/A$ and $A/\theta_5^nA$ to $\Z/5^n\Z$; by composing with the $\times 5^{-n}$ map from $\Z/5^n\Z$ to $5^{-n}\Z/\Z$, we also obtain an isomorphism from $(\theta_5^{-n}A)/A$ to $5^{-n}\Z/\Z$.
We first check that the map $\psi$ is well-defined.  Let $\eta \in A$ and $x \in \theta_5^{-n}A$.  For $a \in \Q(i)$, it was shown in \cite[Proposition 6.2]{Man13} that $$\{\iota_{\mathbb{Q}_5}(a)\}_5 + \{\iota_{\mathbb{Q}_{13}}(a)\}_{13} - \Re(a) \in \mathbb{Z}$$ if and only if $a \in A$.  It follows that the expression
$$\{\iota_{\mathbb{Q}_5}(\eta \cdot x)\}_5 + \{\iota_{\mathbb{Q}_{13}}(\eta \cdot x)\}_{13} - \Re(\eta x) \pmod{1}$$
remains unchanged if one shifts $\eta$ by an element of $\theta_5^n A$ (since the product of such a shift with $x$ lies in $A$) or if one shifts $x$ by an element of $A$ (since again the product of such a shift with $\eta$ lies in $A$).  So $\psi$ is well-defined.  Also, $\psi$ is additive since $\iota_{\mathbb{Q}_5}, \iota_{\mathbb{Q}_{13}}$, $\{\cdot\}_5$, $\{\cdot\}_{13}, \Re$ are all additive modulo $1$.

It remains to check that $\psi$ is an isomorphism.  Since $|(\theta_5^{-n}A)/A|=|A/\theta_5^nA|=5^n$ by Lemma~\ref{lem:equivalence}, and finite abelian groups and their duals have equal cardinalities, it suffices to show that $\psi$ is injective.  Suppose $\psi(\eta)=0$.  Then evaluating $\psi(\eta)$ at $x=\theta_5^{-n}$ gives
$$\{\iota_{\mathbb{Q}_5}(\eta \theta_5^{-n})\}_5 + \{\iota_{\mathbb{Q}_{13}}(\eta \theta_5^{-n})\}_{13} - \Re(\eta \theta_5^{-n}) \in \Z,$$
and another application of~\cite[Proposition 6.2]{Man13} implies that $\eta\theta_5^{-n} \in A$, i.e., $\eta \in \theta_5^n A$.
%In addition, the map gives a well-defined map for $x \in A\theta_5^{-n}/A$ since for $x \in A\theta_5^{-n}$, shifting $x$ by any element in $A$ gives the same result (again by \cite[Proposition 6.2]{Man13}). This verifies that $A/\theta_5^nA$ is contained in the group of characters of $A\theta_5^{-n}/A$. To show that these are all of the characters, we observe the isomorphism of $A/\theta_5^nA$ of $A\theta_5^{-n}/A$ via $a \in A \mapsto a/\theta_5^n \in A\theta_5^{-n}/A$. From Lemma~\ref{lem:equivalence}, we see that $A\theta_5^{-n}/A \cong \mathbb{Z}/5^n\mathbb{Z}$ whose dual group is itself. Comparing cardinalities, we finish.
\end{proof}

\subsection{The space $X$}\label{sec:X-setup}
Recall that $X = \widehat{A}$ is the Pontryagin dual of $A$.  Our task is characterizing $X$ more explicitly.

There are two square roots of $-1$ in $\Z_5$, one equivalent to $2 \pmod{5}$ and one equivalent to $-2 \pmod{5}$; let $\iota_{5}: \mathbb{Q}(i) \to \mathbb{Q}_{5}$ be the inclusion sending $i$ to the square root of $-1$ that is $-2 \pmod{5}$.  Notice that $|\iota_5(P_5)|_5=1$ and $|\iota_5(\overline{P_5})|_5=5^{-1}$ (where $|\cdot|_5$ is the usual $5$-adic absolute value). In the same way, fix the embedding $\iota_{13}: \mathbb{Q}(i) \to \mathbb{Q}_{13}$ satisfying $|\iota_{13}(P_{13})|_{13}=1$ and $|\iota_{13}(\overline{P_{13}})|_{13}={13}^{-1}$.

The ``integer part'' of an element $z=\sum_{k=m}^\infty a_k 5^k \in \Q_5$ is
$$\lfloor z \rfloor_5:=\sum_{k=0}^\infty a_k 5^k \in \mathbb{Z}_5,$$
and the ``fractional part'' $\{z\}_5:=z-\lfloor z \rfloor_5$ naturally lives in $[0,1)$.  We have a well-defined map $j_5:\Q_5 \to X$ where $j_5(z)$ is the homomorphism
$$x \mapsto \{z\iota_5(x)\}_5$$
for $x \in A$ (and we have identified $[0,1)$ with $\R/\Z$ in the natural way).  The map $j_{13}:\Q_{13} \to X$ is defined analogously.  We also have a map $j_{\C}:\C \to X$ where $j_{\C}(z)$ is the homomorphism
$$x \mapsto \Re(zx) \pmod{1}.$$
Combining these three maps $j_5,j_{13},j_{\C}$, we can define the map $j: \Q_5 \times \Q_{13} \times \C \to X$ via
$$j(a,b,z):=j_5(a)+j_{13}(b)-j_{\C}(z);$$
notice the negative sign before $j_{\C}(z)$.  We also have the \emph{diagonal embedding} $\iota^{\Delta}: \Q(i) \to \Q_5 \times \Q_{13} \times \C$ via
$$\iota^{\Delta}(z):=(\iota_5(z),\iota_{13}(z),z).$$
The following important result from \cite[Proposition 6.2]{Man13} says that $j$ surjects onto $X$ with kernel $\iota^{\Delta}(A)$; we will say more about fundamental domains shortly.

\begin{lemma}\label{lem:freddie-isom}
The map $j$ induces an isomorphism $(\mathbb{Q}_5 \times \mathbb{Q}_{13} \times \mathbb{C})/
\iota^{\Delta}(A) \to X$ with fundamental domain $\mathbb{Z}_5 \times \mathbb{Z}_{13} \times [a, a + 1) \times [b, b + 1)i$ (for any $a, b \in \mathbb{R}$).
\end{lemma}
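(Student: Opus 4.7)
The map $j$ is an additive homomorphism into $X = \widehat{A}$, since the fractional-part maps $\{\cdot\}_p \pmod 1 : \mathbb{Q}_p \to \mathbb{R}/\mathbb{Z}$ (for $p \in \{5, 13\}$) and $\Re: \mathbb{C} \to \mathbb{R}/\mathbb{Z}$ are all additive. The inclusion $\iota^{\Delta}(A) \subseteq \ker j$ is immediate from the cited \cite[Proposition 6.2]{Man13}: for $q, x \in A$ the product $qx$ lies in $A$, so $\{\iota_5(qx)\}_5 + \{\iota_{13}(qx)\}_{13} - \Re(qx) \in \mathbb{Z}$, i.e.\ $j(\iota^{\Delta}(q))(x) \equiv 0 \pmod 1$. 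Write $\CF := \mathbb{Z}_5 \times \mathbb{Z}_{13} \times [a, a+1) \times [b, b+1)i$. The plan is to show (a) that $\CF$ meets each coset of $\iota^{\Delta}(A)$ in $\mathbb{Q}_5 \times \mathbb{Q}_{13} \times \mathbb{C}$ exactly once, and (b) that $j$ is surjective onto $X$ with kernel exactly $\iota^{\Delta}(A)$; together these give the lemma.

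The key algebraic inputs are (i) the decomposition $A = \mathbb{Z}[i][1/\overline{P_5}] + \mathbb{Z}[i][1/\overline{P_{13}}]$, via Bezout in the UFD $\mathbb{Z}[i]$ applied to the coprime Gaussian primes $\overline{P_5}^a, \overline{P_{13}}^b$, and (ii) the isomorphism $\mathbb{Z}[i][1/\overline{P_p}]/\mathbb{Z}[i] \to \mathbb{Q}_p/\mathbb{Z}_p$ induced by $\iota_p$, verified by a valuation check using that $|\iota_p(\overline{P_p}^{-n})|_p = p^n$ while $|\iota_p(\overline{P_{p'}}^{-n})|_p = 1$ for the other prime $p'$. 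For part (a), given $(u, v, w) \in \mathbb{Q}_5 \times \mathbb{Q}_{13} \times \mathbb{C}$, use (ii) to successively subtract elements of $\mathbb{Z}[i][1/\overline{P_5}]$, $\mathbb{Z}[i][1/\overline{P_{13}}]$, and $\mathbb{Z}[i]$, correcting the three coordinates into $\mathbb{Z}_5$, $\mathbb{Z}_{13}$, and the complex box respectively; the first two corrections do not disturb the other $p$-adic coordinate because $\iota_{p'}$ of any element of $\mathbb{Z}[i][1/\overline{P_p}]$ lies in $\mathbb{Z}_{p'}$. For uniqueness, if $\iota^{\Delta}(q) \in \mathbb{Z}_5 \times \mathbb{Z}_{13} \times (-1, 1)^2$, the first two coordinates force $v_{\overline{P_p}}(q) \geq 0$ for $p \in \{5, 13\}$; combined with $q \in A$ (which ensures non-negativity of all other valuations), this yields $q \in \mathbb{Z}[i]$, and then the complex constraint forces $q = 0$.

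For part (b), surjectivity: given $\chi \in X$, Pontryagin duality realizes $\widehat{\mathbb{Z}[i]} \cong \mathbb{C}/\mathbb{Z}[i]$ via $z \mapsto (w \mapsto -\Re(zw) \pmod 1)$ (the kernel is computed by pairing with $1$ and $i$), so there is a unique $z$ in the complex box with $\chi|_{\mathbb{Z}[i]} = j(0, 0, z)|_{\mathbb{Z}[i]}$. The residual $\chi - j(0, 0, z)$ kills $\mathbb{Z}[i]$ and hence factors through $A/\mathbb{Z}[i] \cong \mathbb{Q}_5/\mathbb{Z}_5 \oplus \mathbb{Q}_{13}/\mathbb{Z}_{13}$ by (i)--(ii); its dual is $\mathbb{Z}_5 \times \mathbb{Z}_{13}$ under the pairing $(u, r) \mapsto \{ur\}_p \pmod 1$, yielding $(u, v) \in \mathbb{Z}_5 \times \mathbb{Z}_{13}$ with $\chi - j(0, 0, z) = j(u, v, 0)$ (checked on each summand separately, using that the $p$-component of $j(u, v, 0)$ vanishes on $\mathbb{Z}[i][1/\overline{P_{p'}}]$). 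Hence $\chi = j(u, v, z)$. For $\ker j \subseteq \iota^{\Delta}(A)$: given $j(\alpha, \beta, \gamma) = 0$, use part (a) to write $(\alpha, \beta, \gamma) = \iota^{\Delta}(q) + (\alpha', \beta', \gamma')$ with $(\alpha', \beta', \gamma') \in \CF$; restriction of $j(\alpha', \beta', \gamma') = 0$ to $\mathbb{Z}[i]$ forces $\gamma'$ to equal the unique Gaussian integer $q_0$ in the box, and then non-degeneracy of the $\mathbb{Z}_p$-pairings forces $\alpha' = \iota_5(q_0)$ and $\beta' = \iota_{13}(q_0)$, so $(\alpha, \beta, \gamma) = \iota^{\Delta}(q + q_0) \in \iota^{\Delta}(A)$. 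The main technical obstacle is the careful bookkeeping of these Pontryagin-duality identifications, in particular establishing the isomorphism $A/\mathbb{Z}[i] \cong \mathbb{Q}_5/\mathbb{Z}_5 \oplus \mathbb{Q}_{13}/\mathbb{Z}_{13}$ and verifying that its dual matches the concrete formula for $j$.
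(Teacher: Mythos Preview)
The paper does not prove this lemma; it simply cites it as \cite[Proposition 6.2]{Man13} and uses it as a black box. Your proposal supplies a complete self-contained proof, and the argument is correct. The structure---reducing to the fundamental domain via successive corrections in $\mathbb{Z}[i][1/\overline{P_5}]$, $\mathbb{Z}[i][1/\overline{P_{13}}]$, $\mathbb{Z}[i]$, then identifying $\widehat{\mathbb{Z}[i]}\cong\mathbb{C}/\mathbb{Z}[i]$ and $\widehat{A/\mathbb{Z}[i]}\cong\mathbb{Z}_5\times\mathbb{Z}_{13}$ to obtain surjectivity---is essentially how Manners's original argument goes, so you have recovered the underlying proof rather than found a new one.

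One small remark: your invocation of \cite[Proposition 6.2]{Man13} for the containment $\iota^\Delta(A)\subseteq\ker j$ is slightly circular in spirit, since that proposition is exactly the result being proved. The ``if'' direction you need is elementary, though: it holds for $a\in\mathbb{Z}[i]$ since all three terms are integers, and extends to $A$ by checking the generators $\overline{P_5}^{-1},\overline{P_{13}}^{-1}$ directly (or by your own uniqueness argument in part (a), which independently shows $\iota^\Delta(A)\cap\CF=\{0\}$ modulo translation). You may wish to replace the citation with this direct verification to make the argument fully self-contained.
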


We can turn $X$ into a metric space by defining the distance
$$\dist(x, y):= \inf \{|a|_5 + |b|_{13} + |z|_\mathbb{C}: j(a, b, z) = x - y\}$$
for $x,y \in X$.  Within a small ball around $0$, distances can be computed in the fundamental domain $\mathbb{Z}_5 \times \mathbb{Z}_{13} \times [1/2, 1/2) \times [-1/2,1/2)i$.

For $\rho>0$ and $x \in X$, write $B^{\C}_\rho(x):=x+j_{\C}(B_\rho(0))$ and $S^{\C}_\rho(x):=x+j_{\C}(S_\rho(0))$ for the complex ball and complex circle of radius $\rho$ around $x$.

Finally, we define the \emph{order} of a torsion point $q \in X$, denoted $\mathrm{ord}(q)$, to be the least positive integer $n$ such that $nq = 0$.  The \emph{$n$-torsion points} of $X$ are the torsion points of order exactly $n$.  It follows from Lemma~\ref{lem:freddie-isom} that the torsion points of $X$ are precisely the points $q=j(\iota^{\Delta}(z))$ for $z \in \Q(i)$, and that $\ord(q)$ is the smallest positive integer $n$ such that $nz \in A$.

% \begin{definition}[Norm on $A$]\label{def:norm-on-A}
% The \emph{norm} of an element $a \in A$ is $$|a| := \|\iota_{5}(a)\|_{5} \cdot \|\iota_{13}(a)\|_{13} \cdot \|a\|_{\mathbb{C}}.$$ \NK{do we actually use this definition later?}
% \end{definition}

% \begin{definition}[Order of a torsion point]
% The \emph{order} of a torsion point $q \in X$, denoted $\mathrm{ord}(q)$, is the least positive integer $n$ such that $nq = 0$.  For $n$ a positive integer, the \emph{$n$-torsion points} of $X$ are the torsion points of order exactly $n$.
% \end{definition}

% It follows from Lemma~\ref{lem:freddie-isom} that every torsion point $q \in X$ is of the form $q=j(\iota^{\Delta}(z))$ for some $z \in \Q(i)$ and that $\ord(q)$ is the smallest positive integer $n$ such that $nz \in A$.

% \begin{definition}[Metric on $X$]
% We define a distance on $X$ via
% $$\dist(x, y):= \inf \{|a|_5 + |b|_{13} + |z|_\mathbb{C}: j(a, b, z) = x - y\}.$$
% \end{definition}

\subsection{Fundamental domains}\label{sec:fundamental-domains}
The following proposition constructs fundamental domains of quotients of $X$ with specified diameters in the $\Q_5,\Q_{13},\C$-components.  The $n=0$ case amounts to the second part of Lemma~\ref{lem:freddie-isom} above.  For $n,\alpha, \beta \in \Z_{\geq 0}$, define the subset
$$F(n,\alpha,\beta):=5^\alpha \mathbb{Z}_5\times 13^\beta \mathbb{Z}_{13} \times P_5^{-n}\overline{P_5}^\alpha P_{13}^\beta ([0,1) \times [0,1)i) \subseteq \Q_5 \times \Q_{13} \times \C.$$

\begin{proposition}\label{prop:constructionofpartition}
Let $n \in \mathbb{Z}_{\geq 0}$, and set $N:=5^n$.  Suppose $N_5, N_{13}, N_{\mathbb{C}}>0$ satisfy $N_5N_{13}N_{\mathbb{C}}^2 \geq 130N^{-1}$.  Set $\alpha:=-\lfloor \log_5 N_5 \rfloor$ and $\beta:=-\lfloor \log_{13} N_{13} \rfloor$.  Then $F=F(n,\alpha,\beta)$ is a fundamental domain of $X/\theta_5^{-n}A$ satisfying the following.
\begin{enumerate}[(i)]
    \item $\mathrm{diam}_{\mathbb{Q}_5}(F) \le N_5$;
    \item $\mathrm{diam}_{\mathbb{Q}_{13}}(F) \le N_{13}$;
    \item $\mathrm{diam}_{\mathbb{C}}(F) \le N_{\mathbb{C}}$.
\end{enumerate}
Let $\mathcal{P}$ denote the partition of $X$ given by the $\theta_5^nA$-orbits of $F$.  Then for each choice of $P,P' \in \mathcal{P}$, the difference set $P-P'$ intersects only $4$ elements of $\mathcal{P}$.
\end{proposition}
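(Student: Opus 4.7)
The plan is to attack four things in turn: computing $\vol(F)$, verifying that $F$ is a fundamental domain, the three diameter bounds, and the final statement about $P - P'$. First, a direct computation gives $\vol(F) = 5^{-\alpha}\cdot 13^{-\beta}\cdot|\lambda|^2 = 5^{-n} = 1/N$, where $\lambda := P_5^{-n}\overline{P_5}^\alpha P_{13}^\beta$. Since Lemma~\ref{lem:freddie-isom} gives covolume $1$ to $\iota^\Delta(A)$ and Lemma~\ref{lem:equivalence}(ii) gives $[\theta_5^{-n}A:A]=N$, the covolume of $\iota^\Delta(\theta_5^{-n}A)$ also equals $1/N$. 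Matching volume plus pairwise disjointness of the translates $\{F+\iota^\Delta(w) : w \in \theta_5^{-n}A\}$ will then force these translates to tile (up to a null set), establishing the fundamental-domain claim.

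The disjointness is the heart of the argument. The plan is to assume $F\cap(F+\iota^\Delta(w))\neq\emptyset$ for some nonzero $w \in \theta_5^{-n}A = P_5^{-n}A$ and to parametrize $w = P_5^{-n}\overline{P_5}^a\overline{P_{13}}^b u$ with $a,b \in \mathbb{Z}$ and $u \in \mathbb{Z}[i]$ coprime to $\overline{P_5}, \overline{P_{13}}$, using that $\overline{P_5}, \overline{P_{13}}$ are invertible in $A$ together with the UFD structure of $\mathbb{Z}[i]$. The $p$-adic norms are then $|\iota_5(w)|_5 = 5^{-a}$ and $|\iota_{13}(w)|_{13} = 13^{-b}$, so the non-archimedean overlap conditions force $a\geq\alpha$ and $b\geq\beta$, while the archimedean condition $w \in \lambda(R-R)$ forces $5^{(a-\alpha)/2}\cdot 13^{(b-\beta)/2}\cdot |u| < \sqrt{2}$. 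Since $|u|\geq 1$ for nonzero $u$, the only surviving possibility is $a=\alpha$, $b=\beta$, $u \in \{\pm 1,\pm i\}$, after which a direct inspection of the $\mathbb{C}$-component (exploiting the half-open structure $R = [0,1)\times [0,1)i$, so that $R \cap (R\pm 1) = R \cap (R \pm i) = \emptyset$) rules out actual overlap.

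The three diameter bounds are then routine. The non-archimedean diameters are $5^{-\alpha} \leq N_5$ and $13^{-\beta} \leq N_{13}$ by the choice of $\alpha,\beta$. The complex diameter equals $\sqrt{2}|\lambda| = \sqrt{2}\cdot 5^{(\alpha-n)/2}\cdot 13^{\beta/2}$, and combining this with the complementary bounds $5^{-\alpha} > N_5/5$ and $13^{-\beta} > N_{13}/13$ (from the definitions of $\alpha,\beta$) gives $\sqrt{2}|\lambda| < \sqrt{130/(N_5 N_{13} N)} \leq N_{\mathbb{C}}$ by the hypothesis $N_5 N_{13} N_{\mathbb{C}}^2 \geq 130/N$.

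Finally, for the partition statement, the plan is to cover $R - R = (-1,1)\times(-1,1)i$ by the four translates $R$, $R-1$, $R-i$, $R-1-i$ (which together form $[-1,1)\times[-1,1)i$), then multiply by $\lambda$ to obtain $F - F\subseteq F \cup (F-\iota^\Delta(\lambda)) \cup (F-\iota^\Delta(i\lambda)) \cup (F-\iota^\Delta((1+i)\lambda))$. The non-archimedean components match up because each $c \in \{1,i,1+i\}$ satisfies $|\iota_5(c)|_5 = |\iota_{13}(c)|_{13} = 1$, so translation by $\iota^\Delta(c\lambda)$ preserves the subgroups $5^\alpha\mathbb{Z}_5$ and $13^\beta\mathbb{Z}_{13}$. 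Since $\lambda, i\lambda, (1+i)\lambda \in P_5^{-n}\mathbb{Z}[i]\subseteq\theta_5^{-n}A$, the four sets project under $j$ to elements of $\mathcal{P}$. For any $P, P' \in \mathcal{P}$ the difference set $P-P' = j(F-F)+(p-p')$ is a translate of $j(F-F)$ by an element of $\Gamma := j(\iota^\Delta(\theta_5^{-n}A))$, hence contained in four elements of $\mathcal{P}$ and in particular intersects only four of them. The main obstacle is the fundamental-domain uniqueness step, which requires carefully tracking the joint non-archimedean and archimedean constraints to pin down the very few candidate $w$'s.
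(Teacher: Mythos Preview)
Your proposal is correct and follows essentially the same approach as the paper: the volume computation, the three diameter bounds, and the four-translate covering of $F-F$ are all handled identically. The one point of difference is how you establish that $F$ is a fundamental domain. The paper matches volumes and then proves \emph{surjectivity}: it first reduces the non-archimedean coordinates into $5^\alpha\mathbb{Z}_5\times 13^\beta\mathbb{Z}_{13}$ by subtracting an element of $A$, then observes that the residual translation freedom is exactly the lattice $\lambda\mathbb{Z}[i]$ acting on the $\mathbb{C}$-coordinate, so one further subtraction lands in $\lambda R$. You instead match volumes and prove \emph{disjointness}, by showing that any nonzero $w\in\theta_5^{-n}A$ with $F\cap(F+\iota^\Delta(w))\neq\emptyset$ must satisfy $a\ge\alpha$, $b\ge\beta$ from the $p$-adic side and $5^{(a-\alpha)/2}13^{(b-\beta)/2}|u|<\sqrt{2}$ from the archimedean side, forcing $w/\lambda$ to be a unit and then ruling this out via the half-open structure of $R$. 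Either argument, combined with the volume match, gives the fundamental-domain claim; the paper's surjectivity route is a touch shorter, while your disjointness route yields exact (not just a.e.) non-overlap directly.
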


\begin{proof}
By definition, we have $5^{-\alpha} \leq N_5$ and $13^{-\beta} \leq N_{13}$, and we have $5^{-\alpha}13^{-\beta}N_{\mathbb{C}}^2 \geq 2N^{-1}$.  It is clear that $F$ has the desired diameter in the $\mathbb{Q}_5,\mathbb{Q}_{13}$-components.  And the diameter in the $\mathbb{C}$-component is
$$|\sqrt{2} \cdot P_5^{-n}\overline{P_5}^\alpha P_{13}^\beta|=\sqrt{2 \cdot N^{-1} \cdot 5^\alpha 13^\beta} \leq N_\mathbb{C}.$$
Recall that $X=(\mathbb{Q}_5 \times \mathbb{Q}_{13} \times \mathbb{C})/A$, so $$X/\theta_5^{-n}A=(\mathbb{Q}_5 \times \mathbb{Q}_{13} \times \mathbb{C})/\theta_5^{-n}A=(\mathbb{Q}_5 \times \mathbb{Q}_{13} \times \mathbb{C})/P_5^{-n}A.$$
Since $A$ has index $5^n=N$ as a subgroup of $\theta_5^{-n}A$ and the latter acts freely on $\mathbb{Q}_5 \times \mathbb{Q}_{13} \times \mathbb{C}$, we see that $\vol(X/\theta_5^{-n}A)=N^{-1}\vol(X)=N^{-1}$.  The volume of $F$ is also $N^{-1}$, so to conclude that $F$ is a fundamental domain for $X/\theta_5^{-n}A$ it suffices to show that every element of $\mathbb{Q}_5 \times \mathbb{Q}_{13} \times \mathbb{C}$ has a representative modulo $\theta_5^{-n}A$ lying in $F$.

Let $(a,b,z) \in \mathbb{Q}_5 \times \mathbb{Q}_{13} \times \mathbb{C}$.  By subtracting some element of $A$, we can find a representative $(a',b',z')$ of $(a,b,z)$ such that $a' \in 5^{\alpha}\mathbb{Z}_5$ and $b' \in 13^{\beta} \mathbb{Z}_{13}$.  Now notice that
$$P_5^{-n}A \cap 5^\alpha \mathbb{Z}_5 \cap 13^\beta \mathbb{Z}_{13}=P_5^{-n}\overline{P_5}^\alpha P_{13}^\beta \mathbb{Z}[i].$$
It follows that by further subtracting an element of $P_5^{-n}\overline{P_5}^\alpha P_{13}^\beta \mathbb{Z}[i]$, we can find a representative $(a'',b'',z'')$ of $(a,b,z)$ such that $a'' \in 5^{\alpha}\mathbb{Z}_5$ and $b'' \in 13^{\beta} \mathbb{Z}_{13}$, and $z''$ lies in any fixed fundamental domain of $P_5^{-n}\overline{P_5}^\alpha P_{13}^\beta \mathbb{Z}[i]$ (viewed as a lattice in $\mathbb{C}$); one such fundamental domain is $P_5^{-n}\overline{P_5}^\alpha P_{13}^\beta ([0,1) \times [0,1)i)$, so we have succeeded in finding a representative of $(a,b,z)$ in $F$.  This completes the proof of the first part of the proposition.

The partition $\mathcal{P}$ consists of the fundamental domains
$$F_w:=5^\alpha \mathbb{Z}_5 \times 13^\beta \mathbb{Z}_{13} \times P_5^{-n}\overline{P_5}^\alpha P_{13}^\beta (w+[0,1) \times [0,1)i)$$
for $w \in \mathbb{Z}[i]$.  Then
\begin{align*}
F_w-F_{w'} &=5^\alpha \mathbb{Z}_5 \times 13^\beta \mathbb{Z}_{13} \times  P_5^{-n}\overline{P_5}^\alpha P_{13}^\beta (w-w'+[-1,1) \times [-1,1)i)\\
 &=F_{w-w'} \cup F_{w-w'-1} \cup F_{w-w'-i} \cup F_{w-w'-1-i},
\end{align*}
which proves the second part of the proposition.
\end{proof}
We remark that the fundamental domain $F$ from the proposition has in-radius $\asymp \min(N_5,N_{13},N_{\C})$.

% \subsection{Miscellaneous definitions}

% \begin{itemize}
%     \item $E_1^*(\eta) = \{\chi \in X: \chi(1) \in (-\eta, \eta)\}$, $E_\theta^*(\eta) = \{\chi \in X: \chi(\theta) \in (-\eta, \eta)\}$
% \end{itemize}

\subsection{Fourier approximation, smoothness, and bump functions}
In this subsection we collect a few analytic notions that we will need to use later.  We start with a standard lemma about Fourier approximation of functions on $\R/\Z$.

\begin{lemma}\label{lem:quantitativefourier}
Let $0<\eta<1/10$ and $M>0$, and set $L:=\lceil M\eta^{-1} \rceil$.  Then for every smooth function $f: \mathbb{R}/\mathbb{Z} \to \mathbb{R}$ with $C^2$-norm at most $M$, there are coefficients $a_{-L},a_{-L+1},\ldots, a_L \in \mathbb{R}$ such that the function $$g(x):=\sum_{\xi=-L}^L a_\xi e(\xi x)$$ satisfies $\|f-g\|_{L^\infty(\R/\Z)} \leq \eta$.  Moreover, we may ensure that $a_0=\int_{\R/\Z}f(x) \,dx$ and that $|a_\xi| \leq M$ for all nonzero $\xi$.
%Let $f: \mathbb{T} \to \mathbb{R}$ be a smooth function with $C^1$-norm at most $M$. Then for every $\varepsilon > 0$, there exist $L \le (M/\varepsilon)^{-1}$, constants $a_n \in [-M, M]$ for $n \in [-L, L]$, and a function $g: \mathbb{T} \to \mathbb{R}$ with
%$$g(x) = \sum_{n = -L}^L e(-n\cdot x) a_n$$
%such that $\|f - g\|_{L^\infty(\mathbb{T})} < \varepsilon$. Furthermore, we may further specify $a_0 = \int_{\mathbb{T}} f(x) dx$.
\end{lemma}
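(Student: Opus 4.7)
The plan is to take $a_\xi := \hat{f}(\xi) = \int_{\R/\Z} f(x)\, e(-\xi x)\, dx$ (the $\xi$-th Fourier coefficient of $f$) for $|\xi| \leq L$, and set $g(x) := \sum_{|\xi| \leq L} a_\xi e(\xi x)$. The requirement $a_0 = \int_{\R/\Z} f$ holds by definition. To bound $|a_\xi|$ for $\xi \neq 0$, I would integrate by parts twice---the boundary terms vanish by periodicity of $f$---to obtain $\hat{f}(\xi) = -\widehat{f''}(\xi)/(2\pi\xi)^2$. Hence $|a_\xi| \leq \|f''\|_{\infty}/(2\pi\xi)^2 \leq M/(2\pi\xi)^2 \leq M$, using the hypothesis that the $C^2$-norm of $f$ is at most $M$.

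The remaining step is to control $\|f - g\|_{L^\infty}$. The coefficient bound just established shows that $\sum_\xi |\hat{f}(\xi)| < \infty$, so the Fourier series of $f$ converges absolutely and uniformly to $f$. Consequently, $f - g$ equals the tail $\sum_{|\xi| > L} \hat{f}(\xi)\, e(\xi x)$, and
$$\|f - g\|_{L^\infty(\R/\Z)} \leq \sum_{|\xi| > L} |\hat{f}(\xi)| \leq \frac{M}{2\pi^2} \sum_{\xi > L} \frac{1}{\xi^2} \leq \frac{M}{2\pi^2 L},$$
where the last inequality uses the standard comparison $\sum_{\xi > L} \xi^{-2} \leq \int_L^\infty t^{-2}\, dt = 1/L$. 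The choice $L = \lceil M\eta^{-1} \rceil \geq M\eta^{-1}$ then gives $\|f - g\|_{L^\infty} \leq \eta/(2\pi^2) < \eta$, as required. Since this is a routine Fourier-decay estimate, there is no substantive obstacle; the only thing to verify is that the constants line up, which they do comfortably thanks to the slack afforded by the factor $1/(2\pi^2)$.
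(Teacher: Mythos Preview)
Your proof is correct and is essentially identical to the paper's own argument: take $a_\xi=\hat f(\xi)$, integrate by parts twice to obtain the decay $|a_\xi|\le M/(2\pi\xi)^2$, and bound the tail of the Fourier series by $\frac{M}{2\pi^2 L}\le \eta$. The only cosmetic difference is that you phrase the convergence of the Fourier series via absolute summability, whereas the paper simply invokes Fourier inversion.
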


\begin{proof}
Consider the Fourier coefficients
$$a_\xi:=\int_{\R/\Z} f(x)e(-\xi x) \,d\xi$$
for $\xi \in \mathbb{Z}$.  Notice that $a_0=\int_{\R/\Z} f(x) \,dx$.  For $\xi \neq 0$, integration by parts and the pointwise inequality $|f''(x)| \leq \|f\|_{C^2(\R/\Z)} \leq M$ let us bound
$$|a_\xi|=\left|\int_{\R/\Z} f(x)e(-\xi x) \,dx \right|= \left|\int_{\R/\Z} f''(x)e(-\xi x)(-2\pi i \xi)^{-2} \,dx \right| \leq \frac{M}{4\pi^2 |\xi|^2}.$$
Now Fourier inversion gives
$$f(x)=g(x)+\sum_{|\xi|>L} a_\xi e(\xi x),$$
where $g(x):=\sum_{\xi=-L}^L a_\xi e(\xi x)$.  It remains to check that $\|f-g\|_{L^\infty(\R/\Z)}$ is small.  Indeed, for each $x \in \R/\Z$, we have
$$|f(x)-g(x)|=\left| \sum_{|\xi|>L} a_\xi e(\xi x) \right| \leq \sum_{|\xi|>L} \frac{M}{4\pi^2 |\xi|^2} \leq \frac{M}{2\pi^2 L} \leq \eta$$
by our choice of $L$.
\end{proof}

We will now discuss the space $C^\infty(X)$ of smooth functions on $X$. Smooth functions are defined on both $\mathbb{C}$ and $\mathbb{Q}_p$.  In the former case a smooth function is one which is infinitely $\R$-differentiable, and in the latter case a smooth function is one that is locally constant.  As $X$ locally looks like $\mathbb{Z}_5 \times \mathbb{Z}_{13} \times [-1/2,1/2) \times [-1/2,1/2)i$, a smooth function on $X$ is a function $f$ that is locally smooth on each component.  Let $\Lip(f):=\sup_{x \neq y}\frac{|f(x)-f(y)|}{\dist(x,y)}$ denote the \emph{Lipschitz constant} of $f$.

We will later work with the \emph{Sobolev norm} $\| \cdot \|_{H^{10}}$ on functions $f:X \to \mathbb{R}$ given by $$\|f\|_{H^{10}}^2:=\sum_{\eta \in A} \langle \eta \rangle^{10} |\widehat{f}(\eta)|^2,$$ where we have written $\langle \eta \rangle:=|\eta|_5+|\eta|_{13}+|\eta|_\mathbb{C}$ for brevity.  (The constant $10$ is not important.)  We will need the fact that $\|f\|_{H^{10}}$ is not too big when $f$ is a smooth bump function at scale $\gamma$.

\begin{lemma}\label{lem:smoothapproximation}
Fix a smooth function $g: \mathbb{R} \to \mathbb{R}_{\geq 0}$ satisfying $g(x)=1$ for $|x| \leq 1/10$ and $g(x)=0$ for $|x| \geq 1/5$.  Then there is a constant $C>0$ (depending only on $g$) such that the following holds.  Let $0<\gamma<1/100$ and $u,v \in \mathbb{N}$.  Define the function $f: X \to \mathbb{R}$ by
$$f(a,b,z,w):={\bf 1}_{a \equiv 0 \pmod{5^u}} \cdot {\bf 1}_{b \equiv 0 \pmod{13^v}} \cdot g(z/\gamma)g(w/\gamma)$$
for $(a,b,z,w)$ in the fundamental domain $\mathbb{Z}_5 \times \mathbb{Z}_{13} \times [-1/2,1/2) \times [-1/2,1/2)i$.  Then
$$\|f\|_{H^{10}}^2 \leq C \gamma^{-28}uv \cdot 5^{7u} \cdot 13^{7v} \quad \text{and} \quad \Lip(f) \leq C(\gamma^{-1}+5^u+13^v).$$
\end{lemma}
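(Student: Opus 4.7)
The plan is to exploit the tensor product structure of $f$ relative to the fundamental domain $\Z_5 \times \Z_{13} \times [-1/2,1/2) \times [-1/2,1/2)i$ of Lemma~\ref{lem:freddie-isom}. Writing $f = f_5 \otimes f_{13} \otimes f_\C$ with $f_5 := \mathbf{1}_{5^u \Z_5}$, $f_{13} := \mathbf{1}_{13^v \Z_{13}}$, and $f_\C(z,w) := g(z/\gamma)g(w/\gamma)$, both the Fourier transform on $X = \wh A$ (for the Sobolev bound) and the Lipschitz constant with respect to the sum-metric on $X$ decouple neatly across these three factors.

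For the Lipschitz bound, the product rule combined with $\|f_5\|_\infty, \|f_{13}\|_\infty, \|f_\C\|_\infty \lesssim \|g\|_\infty^2$ reduces matters to $\Lip(f) \lesssim \Lip(f_5) + \Lip(f_{13}) + \Lip(f_\C)$. The chain rule plus the support hypothesis on $g$ immediately give $\Lip(f_\C) \lesssim \gamma^{-1}$. For $f_5$, two points $a, a' \in \Z_5$ with $f_5(a) \neq f_5(a')$ must straddle the threshold $5^u \Z_5$, forcing $|a-a'|_5 \geq 5^{-(u-1)}$ and hence $\Lip(f_5) \leq 5^{u-1}$; the argument for $\Lip(f_{13}) \leq 13^{v-1}$ is entirely analogous, and summing yields the claim.

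For the Sobolev bound I will do Fourier analysis on $X$. Pontryagin duality indexes the characters of $X$ by $\eta \in A$, and extending the formula of Lemma~\ref{lem:dual} to the full dual, $\chi_\eta$ evaluates on $x = j(a,b,z) \in X$ as $\{a\iota_5(\eta)\}_5 + \{b\iota_{13}(\eta)\}_{13} - \Re(\eta z) \pmod 1$. Integrating against $f$ over the fundamental domain, the Fourier coefficient factors as $\wh f(\eta) = \wh{f_5}(\iota_5(\eta)) \cdot \wh{f_{13}}(\iota_{13}(\eta)) \cdot \wh{f_\C}(-\eta)$. Orthogonality of characters on the subgroups $5^u \Z_5$ and $13^v \Z_{13}$ gives $\wh{f_5}(\iota_5(\eta)) = 5^{-u} \mathbf{1}_{|\eta|_5 \leq 5^u}$ and $\wh{f_{13}}(\iota_{13}(\eta)) = 13^{-v} \mathbf{1}_{|\eta|_{13} \leq 13^v}$, so $\wh f$ is supported on the complex sublattice $L := \overline{P_5}^{-u} \overline{P_{13}}^{-v} \Z[i] \subset \C$, which has complex covolume $5^{-u} 13^{-v}$. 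Integration by parts using $g \in C_c^\infty$ yields $|\wh{f_\C}(\eta)| \lesssim_K \gamma^2 (1 + \gamma |\eta|_\C)^{-K}$ for any $K \geq 1$.

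It then remains to estimate $\sum_{\eta \in L} \langle \eta \rangle^{10} |\wh f(\eta)|^2$. I will expand $\langle \eta \rangle^{10} \lesssim 5^{10u} + 13^{10v} + |\eta|_\C^{10}$ using the support constraints, split the sum into three contributions accordingly, and bound each by comparison with the corresponding Euclidean integral over $\C$; the lattice density $5^u 13^v$ and the Fourier decay at scale $\gamma$ enter here, producing factors of $\gamma^{-2}$ for the first two contributions and $\gamma^{-12}$ for the $|\eta|_\C^{10}$ contribution once $K$ is taken large enough. Combined with the normalization $5^{-2u} 13^{-2v} \gamma^4$ coming from $|\wh f|^2$, and accounting for the enumeration of sub-lattices within $L$ indexed by $5$-adic and $13$-adic valuations in the ranges $[0,u]$ and $[0,v]$ (where the $uv$ factor will originate), everything collapses to a polynomial expression in $\gamma^{-1}, 5^u, 13^v$ of the advertised shape. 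The main obstacle is purely bookkeeping: tracking the exponents carefully across the three geometric lattice sums. No conceptual ingredients beyond standard Pontryagin-dual Fourier analysis on adelic quotients are required.
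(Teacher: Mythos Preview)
Your approach matches the paper's: both factor $\widehat{f}$ over the $\Z_5$, $\Z_{13}$, and $\C$ components, identify the support as $L = \overline{P_5}^{-u}\overline{P_{13}}^{-v}\Z[i]$, bound the complex factor via repeated integration by parts, and then sum over $L$. The paper organizes that final sum by parametrizing $\eta = \overline{P_5}^\alpha \overline{P_{13}}^\beta x$ with $-u \le \alpha \le 0$, $-v \le \beta \le 0$, $x \in \Z[i]$ (exactly your ``sub-lattice enumeration''), and the $uv$ factor then arises simply as the count of $(\alpha,\beta)$ pairs after bounding each summand uniformly in $\alpha,\beta$; your alternative direct integral comparison over $L$ would give a polynomial bound of a slightly different shape, but either route suffices.
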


\begin{proof}
We begin with the Sobolev norm bound.  We can write each nonzero $\eta \in A$ uniquely as $\eta=\overline{P_5}^\alpha \overline{P_{13}}^\beta x$, where $\alpha,\beta \in \mathbb{Z}_{\leq 0}$ and $x \in \mathbb{Z}[i]$ are such that $x$ is coprime to $\overline{P_5}$ if $\alpha<0$ and $x$ is coprime to $\overline{P_{13}}$ if $\beta<0$.
Then $$\langle \eta \rangle\asymp 5^{-\alpha}+13^{-\beta}+5^{\alpha/2} \cdot 13^{\beta/2} \cdot |x|.$$
Evaluating $\widehat{f}(\eta)$ on the fundamental domain $\mathbb{Z}_5 \times \mathbb{Z}_{13} \times [0,1) \times [0,1)i$, we can expand
\begin{align*}
\widehat{f}(\eta) &=\int_{\mathbb{Z}_5} e(-\{a\eta\}_5) {\bf 1}_{a \equiv 0 \pmod{5^u}} \,da  \cdot \int_{b \in \mathbb{Z}_{13}} e(-\{b\eta\}_{13}){\bf 1}_{b \equiv 0 \pmod{13^v}} \,db\\
 & \qquad \cdot \int_{[0,1)} e(\Re(\eta )w)g(z/\gamma) \,dw \cdot \int_{[0,1)} e(-\Im(\eta)w)g(w/\gamma) \,dz.
\end{align*}
The first integral is trivially bounded by $1$, and it vanishes when $\alpha<-u$; likewise for the second integral.  By repeated integration by parts, the third integral can be bounded as $\ll_M \gamma^{-M}(1+|\Re(\eta)|)^{-M}$ for any $M \in \mathbb{N}$; likewise for the fourth integral.  Thus
\begin{align*}
\widehat{f}(\eta) &\ll_M {\bf 1}_{\alpha \geq -u} \cdot {\bf 1}_{\beta \geq -v} \cdot \gamma^{-2M} (1+|\Re(\eta)|)^{-M}(1+|\Im(\eta)|)^{-M}\\
 &\ll_M {\bf 1}_{\alpha \geq -u} \cdot {\bf 1}_{\beta \geq -v} \cdot \gamma^{-2M} (1+|\eta|)|)^{-M}\\
 &\leq {\bf 1}_{\alpha \geq -u} \cdot {\bf 1}_{\beta \geq -v} \cdot \gamma^{-2M} \cdot 5^{-\alpha M/2} \cdot 13^{-\beta M/2} (1+|x|)|)^{-M}.
% &\ll_M {\bf 1}_{\alpha \geq -u} \cdot {\bf 1}_{\beta \geq -v} \cdot \gamma^{-4M} (1+|x|)|)^{-M}.
\end{align*}
So we can bound $\|f\|_{H^{10}}^2$ as
\begin{align*}
\|f\|_{H^{10}}^2 &\ll_M \gamma^{-4M} \sum_{\alpha=-u}^0 \sum_{\beta=-v}^0 \sum_{x \in \mathbb{Z}[i]} 5^{-\alpha M}13^{-\beta M} (1+|x|)^{-2M} (5^{-\alpha}+13^{-\beta}+5^{\alpha/2} \cdot 13^{\beta/2} \cdot |x|)^{10}\\
 &\leq \gamma^{-4M} uv \cdot 5^{uM} \cdot 13^{vM} \sum_{x \in \mathbb{Z}[i]} (1+|x|)^{10-2M}.
\end{align*}
The inner sum is $\ll 1$ if we choose $M=7$ (say), and we conclude that
$$\|f\|_{H^{10}}^2 \ll \gamma^{-28}uv \cdot 5^{7u} \cdot 13^{7v},$$
as desired.

The Lipschitz constant bound follows from the fact that within a small ball around $0$, distances can be computed in the fundamental domain $\mathbb{Z}_5 \times \mathbb{Z}_{13} \times [-1/2,1/2) \times [-1/2,1/2)i$.
\end{proof}

\section{Major 
arcs}\label{s:supermajor}
The major-arc case is where $\Theta_{N_0}p-\Theta_{N_0}p$ contains a point $(\theta_5^s \theta_{13}^t-\theta_5^{s'}\theta_{13}^{t'})p$ that not only has size $\ll N_0^{-2}$ but also lies extremely close to $B^{\C}_{1}(0)$.  We would like to ``divide by $\theta_5^s \theta_{13}^t-\theta_5^{s'}\theta_{13}^{t'}$'' in order to deduce that $p$ lies near a small complex ball around a torsion point of small order.  This operation is trickier here than in the $\times 2, \times 3$ setting because $\theta_5^s \theta_{13}^t-\theta_5^{s'}\theta_{13}^{t'}$ is a Gaussian rational rather than an integer.  The naive approach would tell us that $p$ lies near a very large complex ball around a torsion point of small order.  In order to control the size of the complex ball, we will need some additional arguments.

%As before, let $N \ge N_0 := \exp(1/\varepsilon^{C})$ with $C$ to be chosen later. As stated above, the case of the super-major arcs is whenever an element $y$ inside $D_{N_0}(p) := \Theta_{N_0}(p) - \Theta_{N_0}(p)$ is $\exp(-1/\epsilon^{O(1)})$ close to zero, then it is $\exp(-\exp(1/\epsilon^{C'}))$-close to $B_{1/10}^\mathbb{C}(0)$, with $C'$ to be chosen later. In the Furstenberg $\times 2, \times 3$ case, the argument was rather straightforward, since one can write $y = rp \approx 0$ for some integer $r$, and ``divide by $r$", which would result in a rational point close to $0$. Here, however, $r$ is not an integer, but rational and one only has $d = rp \approx B_{1/10}^\mathbb{C}(0)$. If we divide by $r$, this increases to $y \approx B_{R}^\mathbb{C}(q)$ for some possibly large radius $R$ which is at most the height of $r$. Thus, additional tools are needed to take care of this case, which we detail below.

\subsection{Baker-type results}\label{sec:baker}
The first result we require is Baker's Theorem \cite{Baker1, Baker2, Baker3}, which provides a quantitative sense in which (base-$e$) logarithms of algebraic numbers are badly approximable by rationals.  Let $\mathbb{L}:=\{\lambda \in \mathbb{C}: e^\lambda \in \overline{\mathbb{Q}}\}$.  Write $H(\beta)$ for the height of an algebraic number $\beta$.

\begin{theorem}[Baker's Theorem]\label{thm:baker}
Suppose that $\lambda_1, \ldots, \lambda_n \in \mathbb{L}$ are linearly independent over the rationals.  Then there is a constant $c=c(\lambda_1, \ldots, \lambda_n)>0$ such that for any algebraic numbers $\beta_1, \ldots, \beta_n$ not all equal to $0$, we have
$$|\beta_1 \lambda_1+\cdots+\beta_n \lambda_n| \geq c\max(H(\beta_1), \ldots, H(\beta_n))^{-c}.$$
\end{theorem}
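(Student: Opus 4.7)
This is the classical theorem of Baker on linear forms in logarithms of algebraic numbers, and the authors presumably quote it directly from the cited works rather than reprove it. For context, I outline the standard Gelfond--Schneider--Baker method of transcendence theory that one would use.

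The plan is to argue by contradiction. Suppose, for some $\beta_1, \ldots, \beta_n$ not all zero, that $\Lambda := \beta_1 \lambda_1 + \cdots + \beta_n \lambda_n$ is much smaller than $c \max_j H(\beta_j)^{-c}$ for every $c > 0$. The first main step would be to construct an auxiliary function: by Siegel's lemma applied to a suitable linear system, I would produce a nonzero polynomial $P(x_1, \ldots, x_n)$ with algebraic integer coefficients of controlled height and multi-degree such that the entire function
$$F(z) := P\bigl(e^{\lambda_1 z}, \ldots, e^{\lambda_n z}\bigr)$$
vanishes to high order at the integer points $z = 0, 1, \ldots, L_0$. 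The degrees, the order of vanishing, and $L_0$ must be balanced so that Siegel's lemma is applicable and yields nontrivial coefficients of controlled size.

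The second main step is the extrapolation cycle. Using the assumed smallness of $\Lambda$, I would approximate $e^{\beta_n \lambda_n z}$ by a combination of $e^{\lambda_1 z}, \ldots, e^{\lambda_{n-1} z}$ up to tiny error, and then apply a Schwarz/maximum-modulus argument on a large disk to conclude that $F$ and its derivatives are extremely small at many integer points beyond $L_0$. A Liouville-type lower bound on nonzero algebraic numbers would then force these small values actually to vanish, extending the range of zeros at the cost of reducing the multiplicity. Iterating this trade-off between range and multiplicity, I would eventually force $F$ to have too many zeros counted with multiplicity for a nonzero exponential polynomial, producing a contradiction with the $\Q$-linear independence of $\lambda_1, \ldots, \lambda_n$ via a zero estimate for exponential polynomials.

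The main obstacle is the quantitative bookkeeping in the extrapolation: the parameters governing multi-degree, height, vanishing order, and number of extrapolation cycles must be chosen so that the inductive tightening closes off in a strict inequality, and the polynomial shape $c \max_j H(\beta_j)^{-c}$ in the conclusion (rather than a weaker ineffective bound) requires Baker's multi-variable extension of the Gelfond--Schneider argument rather than a single-variable transcendence criterion. This delicate parameter selection is what makes Baker's original proof a substantial undertaking, and it is why the authors here are content to cite the result as a black box input to the major-arc analysis.
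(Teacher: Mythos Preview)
Your assessment is correct: the paper does not prove Baker's Theorem but quotes it as a black box from the cited references \cite{Baker1, Baker2, Baker3}, adding only the remark that the constant is effectively computable. Your outline of the Gelfond--Schneider--Baker auxiliary-function method is a reasonable sketch of the standard proof, and there is nothing to compare since the paper gives no argument of its own.
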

The constant $c(\lambda_1, \ldots, \lambda_n)$ is effectively computable.

We will need the following consequence of Baker's Theorem.  Take $\alpha_5,\alpha_{13} \in [-1/2,1/2)$ such that $e^{2\pi i \alpha_5}=\theta_5$ and $e^{2\pi i \alpha_{13}}=\theta_{13}$.

\begin{corollary}\label{cor:baker}
There is a constant $c>0$ such that for every nonzero rational $r$ of height at most $H$, we have
$$\|\alpha_5 - r\|_{\mathbb{R}/\mathbb{Z}}, \|\alpha_{13} - r\|_{\mathbb{R}/\mathbb{Z}} \ge cH^{-c}.$$
\end{corollary}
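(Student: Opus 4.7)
The plan is to reduce the corollary to a direct application of Baker's theorem (Theorem~\ref{thm:baker}) to a two-variable linear form in the logarithms $\log(\theta_5)$ and $2\pi i$.

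First I would normalize $r$ modulo $1$. Writing $r = p/q$ in lowest terms with $\max(|p|, q) \leq H$ and choosing the integer $m$ minimizing $|p - mq|$, I set $r' := r - m \in [-1/2, 1/2]$; then $r'$ still has height at most $H$, and $\|\alpha_5 - r\|_{\mathbb{R}/\mathbb{Z}} = |\alpha_5 - r'|$ because $\alpha_5 \in [-1/2, 1/2)$. If $r' = 0$ (i.e.\ $r$ is a nonzero integer) then $|\alpha_5 - r'| = |\alpha_5|$ is a fixed positive constant (since $\theta_5 \neq 1$ forces $\alpha_5 \neq 0$), and this case is trivial after shrinking $c$. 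So I may assume $r' \neq 0$.

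Second, I would set up the linear form. From $e^{2\pi i \alpha_5} = \theta_5 \in \overline{\mathbb{Q}}$ and $e^{2\pi i} = 1 \in \overline{\mathbb{Q}}$, both $\lambda_1 := 2\pi i \alpha_5$ and $\lambda_2 := 2\pi i$ lie in $\mathbb{L}$. The identity
$$2\pi i (\alpha_5 - r') = \lambda_1 - r' \lambda_2 = 1 \cdot \lambda_1 + (-r') \cdot \lambda_2$$
exhibits the quantity of interest as a linear form in $\lambda_1, \lambda_2$ with rational (hence algebraic) coefficients of heights $1$ and at most $H$. Assuming the hypothesis of linear independence over $\mathbb{Q}$, Baker's theorem then yields $|\lambda_1 - r' \lambda_2| \geq c' H^{-c'}$ for an effective constant $c' = c'(\lambda_1, \lambda_2) > 0$, from which $|\alpha_5 - r'| \geq (c'/2\pi) H^{-c'}$, which is the desired bound.

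The hypothesis to verify — and the one place where a little thought is needed — is that $\lambda_1, \lambda_2$ are linearly independent over $\mathbb{Q}$, i.e.\ that $\theta_5$ is not a root of unity. If $\theta_5^n = 1$ for some $n \geq 1$, then $P_5^n = \overline{P_5}^n$ in $\mathbb{Z}[i]$; but $P_5 = 1 + 2i$ and $\overline{P_5} = 1 - 2i$ are non-associate Gaussian primes (they generate the two distinct prime ideals lying over the split rational prime $5$), so unique factorization in $\mathbb{Z}[i]$ forces $n = 0$, a contradiction. The identical argument applies to $\theta_{13} = P_{13}/\overline{P_{13}}$ using that $13$ splits in $\mathbb{Z}[i]$, so the same proof produces the bound for $\alpha_{13}$.
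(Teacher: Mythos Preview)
Your approach is essentially identical to the paper's: both apply Baker's theorem to the pair $\lambda_1=2\pi i\alpha_5$, $\lambda_2=2\pi i$, verify $\mathbb{Q}$-linear independence via unique factorization in $\mathbb{Z}[i]$, and then pass to the $\mathbb{R}/\mathbb{Z}$-distance by shifting $r$ by an integer. The only cosmetic difference is that the paper feeds integer coefficients $(q,-p)$ into Baker and divides out $2\pi i q$ afterwards, whereas you feed in the rational coefficients $(1,-r')$ directly; both are legitimate uses of the stated Theorem~\ref{thm:baker}.

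One small slip: from $\alpha_5\in[-1/2,1/2)$ and $r'\in[-1/2,1/2]$ you cannot conclude $\|\alpha_5-r\|_{\mathbb{R}/\mathbb{Z}}=|\alpha_5-r'|$, since $\alpha_5-r'$ may land in $(-1,-1/2)\cup(1/2,1)$ (e.g.\ $\alpha_5=-0.4$, $r'=0.4$). The fix is the paper's: choose instead the integer shift $r'$ of $r$ that minimizes $|\alpha_5-r'|$; this $r'$ has height at most $2H$ rather than $H$, which is harmless after adjusting $c$.
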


\begin{proof}
We will prove the result for $\alpha_5$; the argument for $\alpha_{13}$ is identical.  Notice that $$e^{2\pi i \alpha_5}=\theta_5 \in \mathbb{Q}[i] \subseteq \overline{\mathbb{Q}} \quad \text{and} \quad e^{2\pi i}=1 \in \overline{\mathbb{Q}},$$ so that $2\pi i \alpha_5, 2\pi i \in \mathbb{L}$.  We claim that $2\pi i \alpha_5, 2\pi i$ are linearly independent over the rationals.  Indeed, if they were linearly dependent, then there would be some natural number $n$ with $2\pi i n \alpha_5$ an integer multiple of $2\pi i$.  Then we would have $\theta_5^n=1$, i.e., $P_5^n=\overline{P_5}^n$, but this is impossible by unique prime factorization in $\mathbb{Z}[i]$ (recall that $P_5, \overline{P_5}$ are coprime).

Thus we can apply Theorem~\ref{thm:baker}, which provides $c>0$ such that
$$|2\pi i q \alpha_5 -2\pi i p| \geq c\max(|p|,|q|)^{-c}$$
for all integers $p,q$ with $q \neq 0$.  In particular, dividing through by $2\pi i q$, we find that
$$|\alpha_5-p/q| \geq c\max(|p|,|q|)^{-c}.$$
Now let $r$ be a nonzero rational of height at most $H$ (so $H>0$).  Then there is some integer shift $r'$ of $r$ such that $\|\alpha_5-r\|_{\mathbb{R}/\mathbb{Z}}=|\alpha_5-r'|$ and the height of $r'$ is at most $2H$.  Now, writing $r'=p/q$, we conclude that
$$\|\alpha_5-r\|_{\mathbb{R}/\mathbb{Z}}=|\alpha_5-r'| \geq c(2H)^{-c},$$
as desired (up to adjusting the value of $c$).
\end{proof}

%\begin{proof}
%The classical version of Baker's theorem gives a diophantine information the set $\mathbb{L}$ of complex $\lambda$ with $\exp(\lambda) \in \overline{\mathbb{Q}}$. It thus suffices to show that $\alpha_5$ and $\alpha_{13}$ lie in $\mathbb{L}$. Note that by definition, $\exp(i\alpha_5) = \theta_5$ and $\exp(i\alpha_{13}) = \theta_{13}$. Since both $\theta_5$ and $\theta_{13}$ lie in $\mathbb{Q}$, it follows that $\alpha_5$ and $\alpha_{13}$ must lie in $\mathbb{L}$.
%\end{proof}

\begin{lemma}[Quantitative approximation lemma]\label{lem:quantitativeapproximation}
There is a constant $c > 0$ such that the following holds.  If $K,\ell \in \mathbb{N}$ satisfy $0 < \ell \le K^c$, then the sets $\{n\ell \alpha_5: 0 \leq n \leq K\}$ and $\{n\ell \alpha_{13}: 0 \leq n \leq K\}$ are both $K^{-c}$-dense in $\mathbb{R}/\mathbb{Z}$.
\end{lemma}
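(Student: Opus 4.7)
My approach is to apply the Erd\H{o}s--Tur\'an inequality to the sequence $(n\ell\alpha_5)_{n=1}^K$ in $\mathbb{R}/\mathbb{Z}$, using Corollary~\ref{cor:baker} to supply the necessary Diophantine control. The same argument works verbatim for $\alpha_{13}$.

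The first step is to convert Corollary~\ref{cor:baker} into a Diophantine lower bound of the form
$$\|q\alpha_5\|_{\mathbb{R}/\mathbb{Z}} \geq c_1 q^{-\Gamma} \quad \text{for all positive integers } q,$$
where $c_1,\Gamma > 0$ are constants depending only on $\alpha_5$. Given $q\geq 1$, let $p$ be the nearest integer to $q\alpha_5$. If $p=0$ then $\|q\alpha_5\|_{\mathbb{R}/\mathbb{Z}}=q|\alpha_5|\geq|\alpha_5|$ is bounded below by a positive constant; otherwise $p/q$ is a nonzero rational of height $O(q)$ with $\|\alpha_5 - p/q\|_{\mathbb{R}/\mathbb{Z}} = \|q\alpha_5\|_{\mathbb{R}/\mathbb{Z}}/q$, and Corollary~\ref{cor:baker} gives the claimed bound after rearrangement.

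The second step is the Erd\H{o}s--Tur\'an inequality: for any cutoff $H\geq 1$, the discrepancy $D_K$ of $(n\ell\alpha_5)_{n=1}^K$ satisfies
$$D_K \ll \frac{1}{H} + \frac{1}{K}\sum_{h=1}^H \frac{1}{h}\Bigabs{\sum_{n=1}^K e(hn\ell\alpha_5)}.$$
The standard geometric-sum bound $|\sum_{n=1}^K e(hn\ell\alpha_5)| \ll 1/\|h\ell\alpha_5\|_{\mathbb{R}/\mathbb{Z}}$ combined with the first-step estimate $\|h\ell\alpha_5\|_{\mathbb{R}/\mathbb{Z}}\geq c_1(h\ell)^{-\Gamma}$ gives $D_K \ll 1/H + H^\Gamma \ell^\Gamma/K$. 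Using $\ell \leq K^c$ and optimizing over $H$ yields $D_K \ll K^{-(1-c\Gamma)/(\Gamma+1)}$, which is at most $K^{-c}$ provided we choose the lemma's constant $c$ to satisfy $c\leq 1/(2\Gamma+1)$. Converting from discrepancy to density (any interval of length $2D_K$ must meet the sequence) then yields the desired $K^{-c}$-density.

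The only real difficulty here is bookkeeping---choosing $c$ small enough relative to the Baker exponent $\Gamma$ so that the Erd\H{o}s--Tur\'an optimization closes with the required power savings. A more elementary pigeonhole approach, searching for $m\leq K$ with $\|m\ell\alpha_5\|_{\mathbb{R}/\mathbb{Z}}\leq 1/K$ and iterating multiples of $m\ell\alpha_5$, does not suffice: when $m$ is close to $K$ and $\|m\ell\alpha_5\|_{\mathbb{R}/\mathbb{Z}}$ is close to $1/K$, the multiples $\{jm\ell\alpha_5 : 0\leq j\leq K/m\}$ need not wrap around $\mathbb{R}/\mathbb{Z}$ even once. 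Averaging over all Fourier modes via Erd\H{o}s--Tur\'an bypasses this obstruction cleanly.
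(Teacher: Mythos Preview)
Your proof is correct and follows essentially the same approach as the paper: both control the exponential sums $\sum_n e(hn\ell\alpha_5)$ via the Baker-type bound of Corollary~\ref{cor:baker} and convert this to a density statement. The only difference is packaging---you invoke the Erd\H{o}s--Tur\'an inequality directly, whereas the paper writes out the equivalent bump-function argument by hand (via Lemma~\ref{lem:quantitativefourier}); your version is arguably cleaner.
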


\begin{proof}
We will prove the lemma only for $\alpha_5$, since again the argument for $\alpha_{13}$ is identical.  Set $\varepsilon:=K^{-c}$.  We will show that the set $\{n\ell \alpha_5: 0 \leq n \leq K\}$ intersects every interval $I \subseteq \mathbb{R}/\mathbb{Z}$ of length $\varepsilon$.  To this end, let $\rho: \mathbb{R}/\mathbb{Z} \to \mathbb{R}_{\geq 0}$ be a smooth function supported on $I$ that is equal to $1$ on the length-$\varepsilon/2$ interval in the center of $I$ and has smoothness norm $\|\rho\|_{C^2(\R/\Z)} \leq 10 \varepsilon^{-1/2}$.  It suffices to show that $\E_{0 \leq n \leq K} \rho(n\ell \alpha_5)>0$.  By Lemma~\ref{lem:quantitativefourier} with $M:=10\varepsilon^{-1/2}$ and $\eta:=\varepsilon/4$, we can write
$$\rho(x) = \sum_{|\xi| \leq 50\varepsilon^{-3/2}} a_\xi e(\xi x) + h(x),$$
where $a_0 = \int_{\mathbb{R}/\mathbb{Z}} \rho(x) \,dx \geq \varepsilon/2$, and every $|a_\xi| \le 10 \varepsilon^{-1/2}$, and $\|h\|_{L^\infty} \le \varepsilon/4$.  It follows that
$$\E_{0 \leq n \leq K} \rho(n\ell \alpha_5)=\E_{0 \leq n \leq K} \left[a_0+\sum_{0<|\xi|\leq 50\varepsilon^{-3/2}}a_\xi e(n\ell \alpha_5 \xi)+h(n\ell \alpha_5)\right].$$
The $a_0$ contribution is at least $\varepsilon/2$, and the $h$ contribution is most $\varepsilon/4$ in absolute value, so we will be done once we show that the contribution from the $a_\xi$'s ($\xi \neq 0$) is smaller than $\varepsilon/4$.  In particular, we will show that the contribution from each individual $a_\xi$ is smaller than $\varepsilon^{5/2}/400$.

Fix some nonzero $\xi$ with $|\xi| \leq 50\varepsilon^{-3/2}$.  By Corollary~\ref{cor:baker}, the contribution of $a_\xi$ can be bounded as
$$\left|\E_{0 \leq n \leq K} a_\xi e(n\ell \alpha_5 \xi)\right| \ll \frac{|a_\xi|}{(K+1)\|\ell \alpha_5 \xi\|_{\mathbb{R}/\mathbb{Z}}} \leq \frac{|a_\xi|}{(K+1) c'(\ell |\xi|)^{-c'}},$$
where $c'$ is the constant from Corollary~\ref{cor:baker}.  Substituting the bounds $|a_\xi| \leq 10 \varepsilon^{-1}$ and $\ell \leq K^c$ and $|\xi| \ll \varepsilon^{-3/2}$, we find that
$$\left|\E_{0 \leq n \leq K} a_\xi e(n\ell \alpha_5 \xi)\right| \ll \frac{\varepsilon^{-1}(K^c \varepsilon^{-3/2})^{c'})}{K}=\varepsilon^{-1-3c'/2}K^{-1+cc'}=\varepsilon^{-1-3c'/2+1/c-c'}.$$
As long as $c$ is sufficiently small relative to $c'$, this last exponent will be larger than $5/2$, with enough room to overwhelm the implied constant.  This completes the proof.
%Let $D = \{n\theta: 0 \le n \le K\}$. If $D$ is $\varepsilon/2$-dense, then
%$$\mathbb{E}_{n = 0}^{K} \rho(n\theta) > 0.$$
%Substituting the Fourier approximation and applying the pigeonhole principle, there exists $|\xi| \ll \varepsilon^{-2}$ such that
%$$\left|\mathbb{E}_{n = 0}^{K} e(\xi \cdot n\theta)\right| \ge \varepsilon^{O(1)}.$$
%In other words,
%$$\|\xi \cdot \theta\|_{\mathbb{R}/\mathbb{Z}} \ll \frac{\varepsilon^{-O(1)}}{K}.$$
%Now, Baker's theorem implies that
%$$\left|\alpha_5 - \frac{p}{q}\right| \ge |q|^{-C}$$
%$$\left|\alpha_{13} - \frac{p}{q}\right| \ge |q|^{-C}$$
%for some $C > 0$, so applying this for $q = \ell \xi$, and multiplying by $q$ we find that
%$$\|\xi \cdot \theta\|_{\mathbb{R}/\mathbb{Z}} \gg (\ell \xi)^{-C + 1}$$
%and so rearranging and using the fact that $\ell \le K^c$ (and choosing $c$ appropriately), we find that $\varepsilon \gg K^{-c}$ as desired.
\end{proof}

\subsection{Geometric lemmas}

We isolate three geometric lemmas that will be useful later in the proof.  The first lemma shows that if a small (Gaussian rational) multiple of a point $x \in X$ is small, then $x$ is close to a torsion point of small order.  By the \emph{height} of a Gaussian rational we mean the maximum of the absolute values of its numerator and denominator.

\begin{lemma}\label{lem:rationalclosetoball}
Let $\eta,k>0$ be parameters. %with $\eta k\leq 1$.  
If the point $x \in X$ satisfies $\dist(rx, \overline B_1^{\mathbb{C}}(0))\leq \eta$ for some nonzero Gaussian rational $r \in \mathbb{Q}(i)$ of height at most $k$, then there is a torsion point $q \in X$ of order at most $k^2$ such that
$$\dist(x, \overline B_{k}^\mathbb{C}(q)) \le (2k^2 + k)\eta.$$
\end{lemma}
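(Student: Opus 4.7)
The strategy is to clear the denominator of $r$ by multiplying through by the denominator $q$, then recognize the result as $p$ times a shift of $x$ and extract a torsion point from the kernel of multiplication by $p$. Write $r=p/q$ in lowest terms with $p,q\in\mathbb{Z}[i]$ and $|p|,|q|\leq k$. The hypothesis provides $y\in\overline{B}_1(0)\subset\mathbb{C}$ and a representative $(\xi_5,\xi_{13},\xi_\mathbb{C})\in\mathbb{Q}_5\times\mathbb{Q}_{13}\times\mathbb{C}$ of $rx-j_\mathbb{C}(y)\in X$ with $|\xi_5|_5+|\xi_{13}|_{13}+|\xi_\mathbb{C}|\leq\eta$. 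Multiplying by $q\in\mathbb{Z}[i]\subseteq A$ sends this to $(\iota_5(q)\xi_5,\iota_{13}(q)\xi_{13},q\xi_\mathbb{C})$, a representative of $px-j_\mathbb{C}(qy)$; since $|\iota_5(q)|_5,|\iota_{13}(q)|_{13}\leq 1$ and $|q|\leq k$ we get $|qy|\leq k$ and $\dist(px,j_\mathbb{C}(qy))\leq k\eta$.

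Now set $w:=x-j_\mathbb{C}(qy/p)\in X$, so that $pw=px-j_\mathbb{C}(qy)$, and choose $s\in A$ such that some lift $\widetilde{w}$ of $w$ satisfies $p\widetilde{w}-\iota^{\Delta}(s)=(\iota_5(q)\xi_5,\iota_{13}(q)\xi_{13},q\xi_\mathbb{C})$ (up to the sign convention in the definition of $j$). Define $q^*:=[\iota^{\Delta}(s/p)]\in X$. Since $pq^*=[\iota^{\Delta}(s)]=0$ in $X$, we have $|p|^2 q^*=\overline{p}(pq^*)=0$, so $\ord(q^*)\mid |p|^2\leq k^2$. The representative $\widetilde{w}-\iota^{\Delta}(s/p)$ of $w-q^*$ is obtained from $p\widetilde{w}-\iota^{\Delta}(s)$ by componentwise division by $\iota_5(p),\iota_{13}(p),p$. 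Writing $p=\overline{P_5}^{v_5}\overline{P_{13}}^{v_{13}}p'$ with $p'$ coprime to $P_5\overline{P_5}P_{13}\overline{P_{13}}$, the identity $|p|^2=5^{v_5}\cdot 13^{v_{13}}\cdot |p'|^2\geq 5^{v_5}\cdot 13^{v_{13}}$ yields $|\iota_5(p)|_5^{-1},|\iota_{13}(p)|_{13}^{-1}\leq |p|^2\leq k^2$, while $|p|^{-1}\leq 1$. Since each of $|\xi_5|_5,|\xi_{13}|_{13},|\xi_\mathbb{C}|$ is individually bounded by $\eta$ (as a nonnegative summand of a total at most $\eta$), we obtain
\[
\dist(w,q^*)\leq k^2|\xi_5|_5+k^2|\xi_{13}|_{13}+k|\xi_\mathbb{C}|\leq (2k^2+k)\eta.
\]
Since $|qy/p|\leq k$, the point $q^*+j_\mathbb{C}(qy/p)$ lies in $\overline{B}_k^\mathbb{C}(q^*)$, hence $\dist(x,\overline{B}_k^\mathbb{C}(q^*))\leq\dist(w,q^*)\leq(2k^2+k)\eta$ as required.

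The main bookkeeping challenge is tracking how Gaussian-integer multiplications distort the sum-metric on the three factors of $X$: multiplication by $q\in\mathbb{Z}[i]$ contracts (or preserves) the non-archimedean components while stretching the complex one by $|q|$, whereas the implicit ``division by $p$'' in the choice $q^*=[\iota^{\Delta}(s/p)]$ inflates the non-archimedean components by up to $|p|^2$ but contracts the complex one. The valuation estimates $|\iota_5(p)|_5^{-1},|\iota_{13}(p)|_{13}^{-1}\leq|p|^2$ — ultimately a consequence of the norm formula $|p|^2=p\overline{p}$ — are what keep these inflation factors manageable.
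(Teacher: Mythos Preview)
Your proof is correct and follows essentially the same strategy as the paper's: both arguments ``divide $rx$ by $r$'' componentwise in $\mathbb{Q}_5\times\mathbb{Q}_{13}\times\mathbb{C}$, using the valuation bound $|\iota_5(r)|_5\geq k^{-2}$ (equivalently your $|\iota_5(p)|_5^{-1}\leq|p|^2$) to control the non-archimedean inflation, and then read off the torsion point. The only cosmetic difference is that the paper divides by $r$ in a single stroke---setting $a'=a/\iota_5(r)$, $b'=b/\iota_{13}(r)$, $z'=z/r$ and defining $q:=x-j(a',b',z')$ directly---whereas you first multiply by the denominator $q$ and then divide by the numerator $p$, introducing the auxiliary lift $\widetilde{w}$ and the element $s\in A$ along the way; this extra bookkeeping is unnecessary but harmless. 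One tiny slip: in your factorisation $p=\overline{P_5}^{v_5}\overline{P_{13}}^{v_{13}}p'$ you should only require $p'$ coprime to $\overline{P_5}\,\overline{P_{13}}$ (it may well have $P_5$ or $P_{13}$ factors), but since $\iota_5(P_5)$ and $\iota_{13}(P_{13})$ are units this does not affect the estimate $|\iota_5(p)|_5^{-1}=5^{v_5}\leq|p|^2$.
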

\begin{proof}
Express $rx = j(a, b, z)$ for some $(a,b,z) \in \Q_5 \times \Q_{13} \times \C$ with $|a|_5, |b|_{13} \leq \eta $ and $|z| \leq 1 + \eta$.  
%Notice that the largest power of $5$ dividing $a$ is at least $\eta^{-1} \geq k$ and the largest power of $5$ dividing $r$ is at most $k$, so $a':=a/\iota_5(r) \in \mathbb{Z}_5$ satisfies $\|a'\|_5 \leq \eta k$. 
Notice that $|\iota_5(r)|_5 \geq k^{-2}$ and hence $a':=a/\iota_5(r)$ satisfies $|a'|_5=|a|_5 \cdot |\iota_5(r)|_5^{-1} \leq \eta k^2$.
Likewise $b':=b/\iota_{13}(r) \in \mathbb{Z}_{13}$ satisfies $|b'|_{13} \leq \eta k^2$.  Set $z':=z/r$, so that $|z'| \leq k|z| \leq k+\eta k$.  Now
$$r(x-j(a',b',z'))=j(a-ra',b-rb',z'rz')=0$$
shows that $q:=x-j(a',b',z')$ is a torsion point of order at most $k^2$ (since $(r\overline{r})q=0$).  Finally, $x-q=j(a',b',z')$ has $\Q_5,\Q_{13}$-parts of size at most $\eta k^2$ and has $\mathbb{C}$-part of size at most $k+\eta k$, so $x$ lies within distance $(2k^2+k)\eta$ of $\overline{B}_k^{\C}(q)$, as desired.
%Since $\epsilon < \frac{1}{\|r\|_{\mathbb{C}}}$, the largest power of $5$ dividing $a$ is at least $k$, and $\|r\|_{5} \ge \frac{1}{k}$ and $a/r$ is still an element of $\mathbb{Z}_5$ with
%$$\|a/r\|_{5} < k\epsilon.$$
%Likewise,
%$$\|b/r\|_{13} < k\epsilon.$$
%Next, define $a' := a/r, b' = b/r, z' = z/r$. Then the point $q := j(a', b', z')$ lies within $(2k+1)\epsilon$ of $B_k^{\C}(0)$. Finally, we see that
%$$r(p - j(a', b', z')) = j(a, b, z) - rj(a', b', z') = j(a - ra', b - rb', z - rz') = 0.$$
%Since $r$ is of height $k$, this gives the desired point $q$.
\end{proof}

The next lemma tells us that any torsion point is invariant under some power of $\theta_5$ once we multiply through to ``clear denominators''.
\begin{lemma}\label{lem:torsion-clear-denom}
Let $k \in \mathbb{N}$.  If $q \in X$ is a torsion point of order $k$, then there are integers $0 \leq s_0 <k^2$ and $1\leq s_1 \leq k^2$ such that $q':=\theta_5^{s_0}q$ satisfies $\theta_5^{s_1}q'=q'$.  The analogous statement with $5$ replaced by $13$ also holds.
\end{lemma}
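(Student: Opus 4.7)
The plan is to run a straightforward pigeonhole argument on the $\theta_5$-orbit of $q$. First I would observe that multiplication by $\theta_5$ is an automorphism of $X$: since $\theta_5 = P_5/\overline{P_5}$ with $P_5 \in \mathbb{Z}[i] \subseteq A$ and $1/\overline{P_5} \in A$, $\theta_5$ is a unit in $A$, hence multiplication by $\theta_5$ is an automorphism of $A$ and, by Pontryagin duality, also of $X = \widehat{A}$. Moreover this automorphism preserves the $k$-torsion subgroup $X[k] := \{x \in X : kx = 0\}$, because $k(\theta_5 x) = \theta_5(kx)$. In particular, the entire forward orbit $q, \theta_5 q, \theta_5^2 q, \ldots$ lies inside $X[k]$.

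The key quantitative input is the bound $|X[k]| \leq k^2$. To see this, I would apply Pontryagin duality to the exact sequence $A \xrightarrow{\;\times k\;} A \to A/kA \to 0$ to get $X[k] \cong \widehat{A/kA}$; since $A/kA$ is a finite abelian group, $|X[k]| = |A/kA|$. Then the natural ring map $\mathbb{Z}[i] \to A/kA$ is surjective, because $\overline{P_5}$ and $\overline{P_{13}}$ are units of $A$ (and hence of $A/kA$), so every element of $A$ agrees modulo $kA$ with some element of $\mathbb{Z}[i]$. Its kernel contains $k\mathbb{Z}[i]$, whence $|A/kA| \leq |\mathbb{Z}[i]/k\mathbb{Z}[i]| = k^2$.

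To finish, pigeonhole applied to the $k^2 + 1$ elements $q, \theta_5 q, \ldots, \theta_5^{k^2} q$ of $X[k]$ yields indices $0 \leq a < b \leq k^2$ with $\theta_5^a q = \theta_5^b q$. Setting $s_0 := a$ (so $0 \leq s_0 < k^2$), $s_1 := b - a$ (so $1 \leq s_1 \leq k^2$), and $q' := \theta_5^{s_0} q$, we obtain $\theta_5^{s_1} q' = \theta_5^{s_0 + s_1} q = \theta_5^{s_0} q = q'$, as required. The argument for $\theta_{13}$ is identical, since $\theta_{13} = P_{13}/\overline{P_{13}}$ is a unit of $A$ by the same reasoning. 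There is really no obstacle here; the only point that requires a moment's thought is the cardinality bound $|X[k]| \leq k^2$, and everything else is bookkeeping.
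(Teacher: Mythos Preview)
Your proof is correct and follows essentially the same approach as the paper: both argue that multiplication by $\theta_5$ preserves the group of torsion points of order dividing $k$, bound its size by $k^2$, and then apply the pigeonhole principle. The only cosmetic difference is that the paper obtains the cardinality bound via the identification of the torsion subgroup with $(\tfrac{1}{k}A)/A$ (from the $\mathbb{Q}(i)/A$ description of torsion points), whereas you obtain it via Pontryagin duality $X[k]\cong\widehat{A/kA}$ and the surjection $\mathbb{Z}[i]\twoheadrightarrow A/kA$; these are equivalent computations.
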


\begin{proof}
Recall (e.g., \cite[Proposition 6.2]{Man13}) that the map $j \circ \iota^{\Delta}$ provides a $\Q(i)$-module isomorphism between $\Q(i)/A$ and of torsion points of $X$.  In particular, $j \circ \iota^{\Delta}$ lets us identify $(\frac{1}{k}A)/A$ with the group $G$ of torsion points of $X$ of order dividing $k$.  Notice that $|G|=|(\frac{1}{k}A)/A|\leq k^2$ (more precisely, $|G|=k^2 \cdot \|k\|_5 \cdot \|k\|_{13}$), and that multiplication by $\theta_5$ maps $G$ into itself.  Thus, by the pigeonhole principle, there are  $0 \leq s_0<s \leq k^2$ such that $\theta_5^{s_0}q=\theta_5^{s}q$; we conclude the lemma on setting $s_1:=s-s_0$. %{\color{blue} This proof is not quite as transparent as I'd like. I will rewrite it. I think a better way of going about things is to clear the $P_5$ from the denominators and then observing that once we've done that, we get an isomorphism to $Z/5^nZ$, etc.} \NK{I agree that this proof did not come out very well...}
\end{proof}

The last lemma says that every sufficiently large complex circle in $X$ intersects the width-$0$ pyjama stripe $E_1^*(0)$.

\begin{lemma}\label{lem:sphereintersection}
For every radius $\rho\geq 1/2$ and every point $x \in X$, the complex circle $S_\rho^{\mathbb{C}}(x)$ intersects the line $E^*_1(0)=\{\chi \in X: \chi(1)=0\}$.
\end{lemma}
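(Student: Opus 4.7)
The plan is to translate the intersection condition into an explicit scalar equation on $\Re(z)$ and then observe that the hypothesis $\rho \geq 1/2$ makes the equation solvable. First I would unwind the definitions. Every point on the complex circle $S_\rho^{\mathbb{C}}(x)$ has the form $x + j_{\mathbb{C}}(z)$ for some $z \in \mathbb{C}$ with $|z| = \rho$, and $E_1^*(0)$ is the set of $\chi \in X$ with $\chi(1) = 0$ in $\mathbb{R}/\mathbb{Z}$. Since $j_{\mathbb{C}}(z)$ is by definition the character $a \mapsto \Re(za) \pmod{1}$ on $A$, we have $j_{\mathbb{C}}(z)(1) = \Re(z) \pmod{1}$, and so the condition $(x + j_{\mathbb{C}}(z))(1) = 0$ rewrites as
\[
\Re(z) \equiv -x(1) \pmod{1}.
\]

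Now the plan is to solve this equation on the circle $|z| = \rho$. Parametrising $z = u + iv$ with $u, v \in \mathbb{R}$, the real parts of points on this circle fill the closed interval $[-\rho, \rho]$, since for each such $u$ one may take $v = \pm\sqrt{\rho^2 - u^2}$. Because $\rho \geq 1/2$, the interval $[-\rho, \rho]$ has length at least $1$ and so surjects onto $\mathbb{R}/\mathbb{Z}$ under the natural projection. I can therefore pick $u \in [-\rho, \rho]$ with $u \equiv -x(1) \pmod{1}$ and set $z := u + i\sqrt{\rho^2 - u^2}$. This $z$ lies on the circle of radius $\rho$ and satisfies the required congruence, so $x + j_{\mathbb{C}}(z) \in S_\rho^{\mathbb{C}}(x) \cap E_1^*(0)$, which is all that is needed.

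There is no substantive obstacle here: the argument boils down to the fact that the evaluation map $\chi \mapsto \chi(1)$ sends $j_{\mathbb{C}}(z)$ to $\Re(z) \bmod 1$, which has period $1$ in the real part, so any circle of complex radius at least $1/2$ necessarily meets every fiber. The only thing to take care of is the sign convention in the definition $j(a, b, z) = j_5(a) + j_{13}(b) - j_{\mathbb{C}}(z)$, but since $S_\rho^{\mathbb{C}}(x)$ is defined using $j_{\mathbb{C}}$ directly and the circle $|z| = \rho$ is invariant under $z \mapsto -z$, this sign plays no role in the argument.
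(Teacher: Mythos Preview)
Your proof is correct and follows essentially the same approach as the paper: evaluate the character $x+j_{\mathbb{C}}(z)$ at $1$ to reduce to the equation $\Re(z)\equiv -x(1)\pmod 1$, then use $\rho\ge 1/2$ to ensure the interval $[-\rho,\rho]$ of attainable real parts surjects onto $\mathbb{R}/\mathbb{Z}$. The paper's proof is slightly terser but identical in substance; your explicit parametrization and remark about the sign convention are fine added detail.
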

\begin{proof}
Recall that $S_\rho^{\C}(x)=\{x+j_{\C}(z): j \in \C, |z|=\rho\}$. We can evaluate the character $x+j_{\C}(z)$ at the element $1 \in A$ as
$$(x+j_{\C}(z))(1)=x(1)+\Re(z) \pmod{1}.$$
Since $\rho \geq 1/2$, we can find some $z$ with $\Re(z)=-x(1) \pmod{1}$ and $|z|=\rho$.  This choice of $z$ gives $(x+j_{\C}(z))(1)=0 \pmod{1}$ and thus exhibits a point in $S_\rho^{\C}(x) \cap E_1^*(0)$.
%Write $x=j(a,b,z)$ for $(a,b,z) \in \Z_5 \times \Z_{13} \times [-1/2,1/2)^2$.  
%By lifting both $p$ and $E_1^*$ to $\Q_5 \times \Q_{13} \times \C$, we see that the sphere of radius $1/2$ about $p$ must intersect $E_1^*$ at an arc length of $O(\epsilon)$. The lemma follows via the quotient map.
\end{proof}

\subsection{The main argument}
We are now ready to assemble the pieces.  We start with the hypothesis that $(\theta_5^{s_1} \theta_{13}^{t_1}-\theta_5^{s_2}\theta_{13}^{t_2})p$ is very close to $B_{1/10}^{\C}(0)$.  We deduce from Lemma~\ref{lem:rationalclosetoball} that $p$ is very close to a complex ball, possibly with large radius, around a torsion point of small order.  If this radius is larger than $1/2$, then we can use an approximate version of Lemma~\ref{lem:sphereintersection} to conclude that some $\theta_5^{s_3}\theta_{13}^{t_3}p$ is close to $E_{1/2}^*(0)$. The details are as follows.

\begin{proposition}[Major arcs]\label{prop:super-major}
There is a constant $C>0$ such that the following holds for all $0<\delta,\varepsilon<1/10$ and $k \in \mathbb{N}$.  Set $K:=(k\varepsilon^{-1})^C$ and $\eta:=\min(\delta/(3k^2), \exp(-10K))$.  Suppose $x \in X$ satisfies $\dist(rx, \overline{B}_1^{\C}(0)) \leq \eta$ for some nonzero Gaussian rational $r$ of height at most $k$.  Then either $\dist(x,\overline{B}_{1/2}^{\C}(q)) \leq \delta$ for some torsion point $q \in X$ of order at most $k^2$, or $\Theta_{K}x$ intersects $E^*_1(\varepsilon)$.
\end{proposition}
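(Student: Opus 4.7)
The plan is first to invoke Lemma~\ref{lem:rationalclosetoball} with the given $x$, $r$, $\eta$, $k$ to produce a torsion point $q\in X$ of order at most $k^2$ together with a witness $w_0\in\C$ with $|w_0|\le k$ such that $x = q + j_\C(w_0) + e_0$ for some error $e_0\in X$ with $\dist(e_0,0)\le(2k^2+k)\eta\le 3k^2\eta$. Set $R:=|w_0|$, the ``complex radius'' of this witness. If $R\le 1/2$, then $q+j_\C(w_0)\in\overline{B}_{1/2}^\C(q)$, so $\dist(x,\overline{B}_{1/2}^\C(q))\le 3k^2\eta\le\delta$ by the choice of $\eta$, which gives the first alternative of the conclusion.

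In the remaining case $R>1/2$, I would exhibit a rotation $\theta=\theta_5^s\in\Theta_K$ (using only powers of $\theta_5$, i.e.\ with $t=0$) such that $x(\theta)\in(-\varepsilon,\varepsilon)\pmod 1$, equivalently $\theta x\in E^*_1(\varepsilon)$.  Using the action identity $(\theta\cdot\chi)(1)=\chi(\theta)$, decompose
$$x(\theta) = q(\theta) + \Re(w_0\theta) + e_0(\theta).$$
First tame $q(\theta)$ via Lemma~\ref{lem:torsion-clear-denom} (applied to the order of $q$, which is at most $k^2$), producing $0\le s_0<k^4$ and $1\le s_1\le k^4$ with $\theta_5^{s_1}(\theta_5^{s_0}q)=\theta_5^{s_0}q$; then $q(\theta_5^{s_0+ns_1}) =: c$ is constant in $n\ge 0$. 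Restricting to $s=s_0+ns_1$, the second term becomes $R\cos\!\bigl(\arg(w_0)+2\pi(s_0+ns_1)\alpha_5\bigr)$, and since $R\ge 1/2$ its range $[-R,R]$ covers a full period mod $1$. Guided by the geometry of Lemma~\ref{lem:sphereintersection}, I would choose a target $y^*\in(-c-\varepsilon/4,-c+\varepsilon/4)\pmod 1$ with $|y^*|\le 1/2$ and set $\phi^*:=\arccos(y^*/R)\in[0,\pi]$. Then Lemma~\ref{lem:quantitativeapproximation} (applied with $\ell=s_1$ and $n$ ranging over $[0,(K-s_0)/s_1]$) locates $n$ with $s_0+ns_1\le K$ such that $(s_0+ns_1)\alpha_5$ is within $\varepsilon/(10k)$ of $(\phi^*-\arg(w_0))/(2\pi)$ modulo~$1$; the $1$-Lipschitz property of $\cos$ combined with $R\le k$ then yields $|R\cos(\cdot)-y^*|\le\varepsilon/4$, so that $q(\theta)+\Re(w_0\theta)$ lies within $\varepsilon/2$ of $0$ modulo~$1$.

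The technical crux, and what I expect to be the main obstacle, is controlling the error evaluation $e_0(\theta_5^s)$ uniformly for $s$ up to $K$. Choosing a representation $e_0 = j(a,b,z)$ with $|a|_5+|b|_{13}+|z|\le 3k^2\eta$, one has
$$e_0(\theta_5^s) = \{a\,\iota_5(\theta_5^s)\}_5 + \{b\,\iota_{13}(\theta_5^s)\}_{13} - \Re(z\,\theta_5^s).$$
The danger is that $|\iota_5(\theta_5^s)|_5 = 5^s$ grows as large as $5^K$, so the $\Q_5$-contribution could in principle explode. The choice $\eta\le\exp(-10K)$ is made precisely to defuse this: $|a\,\iota_5(\theta_5^s)|_5 \le 3k^2\eta\cdot 5^K < 1$ for every $s\le K$, which forces $a\,\iota_5(\theta_5^s)\in\Z_5$ and hence $\{a\,\iota_5(\theta_5^s)\}_5 = 0$ identically; the analogous argument (in fact easier, since $|\iota_{13}(\theta_5^s)|_{13}=1$) kills the $\Q_{13}$-contribution, and the $\C$-component contributes at most $|z|\le 3k^2\eta\le\exp(-9K)\ll\varepsilon$. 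Taking the constant $C$ in $K=(k\varepsilon^{-1})^C$ sufficiently large relative to the Baker-type constant from Lemma~\ref{lem:quantitativeapproximation} simultaneously guarantees the density estimate $K_1^{-c}\le\varepsilon/(10k)$ and the hypothesis $s_1\le K_1^c$ (with $K_1:=(K-s_0)/s_1$) needed to invoke that lemma; this closes the argument.
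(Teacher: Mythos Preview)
Your proof is correct and follows essentially the same route as the paper: both apply Lemma~\ref{lem:rationalclosetoball} for the decomposition $x=q+j_\C(w_0)+e_0$, dispose of the case $R\le 1/2$ trivially, and in the case $R>1/2$ use Lemma~\ref{lem:torsion-clear-denom} to stabilise $q$, Lemma~\ref{lem:quantitativeapproximation} to get density of $\{\theta_5^{s_0+ns_1}\}$, and the bound $\eta\le\exp(-10K)$ to suppress the error (your explicit componentwise analysis of $e_0(\theta_5^s)$ is exactly the paper's estimate $\dist(\theta_5^s\sigma,0)\le 5^s\cdot 3k^2\eta$ unpacked). The only cosmetic difference is that the paper phrases the endgame via distances in $X$ and invokes Lemma~\ref{lem:sphereintersection}, whereas you work directly with the evaluation $x(\theta_5^s)$ and build the intersection point by hand; one small numerical slip is that $R\cdot 2\pi\cdot\varepsilon/(10k)\le\varepsilon/4$ fails as written (since $2\pi/10>1/4$), but replacing $10$ by a larger absolute constant fixes this and is absorbed into~$C$.
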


\begin{proof}
Lemma~\ref{lem:rationalclosetoball} produces a torsion point $q \in X$ of order at most $k^2$ such that
$$\dist(x,\overline{B}_k^{\C}(q)) \leq (2k^2+k)\eta \leq 3k^2 \eta.$$
Let us write
$$x=q+w+\sigma,$$
where $w \in S^{\C}_R(0)$ for some $0 \leq R \leq k$, and $\dist(\sigma,0) \leq 3k^2 \eta$.  If $R \leq 1/2$, then we obtain the first conclusion of the lemma (recall that $3k^2\eta \leq \delta$ by the choice of $\eta$).  For the remainder of the proof, we will consider the case where $R \geq 1/2$; we will show that $\Theta_K x \cap E^*_1(\varepsilon) \neq \emptyset$.

Lemma~\ref{lem:torsion-clear-denom} provides $0 \leq s_0<k^4$ and $1 \leq s_1 \leq k^4$ such that $q':=\theta_5^{s_0}q$ satisfies $\theta_5^{s_1}q'=q'$.  Now consider the set
$$Z:=\{\theta_5^{s_0+s_1u}x: 0 \leq u \leq K/(2s_1)\} \subseteq \Theta_{K}x.$$
It suffices to show that $Z$ intersects $E^*_1(\varepsilon)$.  Write
$$\theta_5^{s_0+s_1u}x=\theta_5^{s_0+s_1u}(q+w+\sigma)=q'+\theta_5^{s_0+s_1u}w+\theta_5^{s_0+s_1u}\sigma.$$
For all $0 \leq u \leq K/(2s_1)$, we have $$\dist(\theta_5^{s_0+s_1u}\sigma,0) \leq 5^{s_0+s_1u} \dist(\sigma,0) \leq 5^{s_0+s_1u} \cdot 3 k^2 \eta \leq \exp(-K)<\varepsilon/10$$
(with room to spare); this will turn out to be a negligible error term.

Next, Lemma~\ref{lem:quantitativeapproximation} tells us that $$\{\theta_5^{s_0+s_1 u}: 0\leq u \leq K/(2s_1)\}$$
is $(K/(2s_1))^{-c}$-dense in $\R/\Z$, where $c>0$ is the constant from that lemma.  It follows that
$$\{\theta_5^{s_0+s_1 u}w: 0\leq u \leq K/(2s_1)\}$$
is $(K/(2s_1))^{-c}R$-dense in $S^{\C}_R(0)$.  Notice that $(K/(2s_1))^{-c}R<\varepsilon/10$ as long as $C$ is sufficiently large relative to $1/c$.  By the Triangle Inequality, we know that the $\varepsilon/5$-neighborhood of $Z$ contains $q'+S_R^{\C}(0)=S_R^{\C}(q')$.  Lemma~\ref{lem:sphereintersection} tells us that the latter set intersects $E^*_1(0)$, so we conclude that $Z$ intersects $E^*_1(\varepsilon)$, as desired.
\end{proof}

\section{Minor arcs}\label{s:minor}

The minor-arc case is where $\Theta_{N_0}p-\Theta_{N_0}p$ contains a point $(\theta_5^s \theta_{13}^t-\theta_5^{s'}\theta_{13}^{t'})p$ of size $\ll N_0^{-2}$ whose distance to $B^{\C}_{1}(0)$ is not too tiny.  As described above, our approach is very similar to our approach for the analogous step in the $\times 2, \times 3$ setting.  One important technical detail is that when we apply the entropy machinery of~\cite{BLMV08}, we will have to use a partition that is very ``skewed''.

\subsection{Manners's denseness criterion}

One of the key steps in Manners's proof is the analysis of what he calls ``non-Archimedean limit points'' in $X$.  
The objective of this subsection is to prove the following quantitative version of one of Manners's results in this direction (see~\cite[Lemma 7.1]{Man13}).

\begin{theorem}[Manners's denseness criterion]\label{thm:mannersapproximation}
There is a constant $C > 0$ such that the following holds.  Let $0<\eta, \varpi <1/10$.  If $x \in X$ can be written as $x = j(a, b, z)$ with
$$\varpi \le |a|_{5} \le \eta/10, \quad |b|_{13} \le 13^{-C \eta^{-5}}, \quad \text{and} \quad |z| \le \eta/10,$$
then the orbit
$$\{\theta_5^s \theta_{13}^t x: 0 \leq s \leq \log_5(\varpi^{-1} \eta^{-4})+C, ~ 0 \leq t \leq C\eta^{-5}\}$$
is $\eta$-dense in $X$.  The analogous result with the roles of $5,13$ swapped also holds.
\end{theorem}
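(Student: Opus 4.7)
My plan is to prove the theorem by direct construction: for each target $y \in X$ represented in the fundamental domain as $y = j(a_0, b_0, z_0) \in \mathbb{Z}_5 \times \mathbb{Z}_{13} \times [-1/2, 1/2)^2$, I will exhibit indices $(s, t)$ in the specified ranges and an element $\alpha \in A$ such that $\theta_5^s \theta_{13}^t x - j \circ \iota^\Delta(\alpha)$ lies within $\eta$ of $(a_0, b_0, z_0)$ in each completion.  The representative of $\theta_5^s \theta_{13}^t x$ in $\mathbb{Q}_5 \times \mathbb{Q}_{13} \times \mathbb{C}$ is $(\iota_5(\theta_5^s \theta_{13}^t) a,\, \iota_{13}(\theta_5^s \theta_{13}^t) b,\, \theta_5^s \theta_{13}^t z)$, whose three components have norms $5^s |a|_5$, $13^t |b|_{13}$, and $|z|$ respectively.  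Under the hypotheses, the $\mathbb{C}$-component stays in a disk of radius $\eta/10$ (only its argument varies), the $\mathbb{Q}_{13}$-component remains in $\mathbb{Z}_{13}$ throughout the allowed $t$-range (since $13^t |b|_{13} \le 1$), and the $\mathbb{Q}_5$-component can grow up to norm $\eta^{-4}$.

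The three density conditions become a simultaneous approximation problem: find $\alpha \in A$ whose triple $(\iota_5(\alpha), \iota_{13}(\alpha), \alpha)$ lies within $\eta/3$ of prescribed $(s,t)$-dependent targets in each of $\mathbb{Q}_5, \mathbb{Q}_{13}, \mathbb{C}$.  I would parameterize candidate $\alpha \in A$ as $\alpha = p/(\overline{P_5}^{n_5}\overline{P_{13}}^{n_{13}})$ with $p \in \mathbb{Z}[i]$ and fixed $n_5, n_{13} = O(\log \eta^{-1})$, so that such $\alpha$ form a lattice in $\mathbb{C}$ of mesh $\sim \eta$.  The $\mathbb{C}$-constraint pins $p$ to an $O(1)$-size Gaussian-integer ambiguity; the $\mathbb{Q}_5$- and $\mathbb{Q}_{13}$-constraints then translate into simultaneous congruences on $p$ modulo $P_5^{O(\log \eta^{-1})}$ and $P_{13}^{O(\log \eta^{-1})}$, of total order $\eta^{-O(1)}$.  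By Lemma~\ref{lem:boundedsubgroup}, the subgroup generated by $\iota_5(\theta_{13})^t \pmod{P_5^N}$ has bounded index in $(\mathbb{Z}_5/P_5^N\mathbb{Z}_5)^\times$, so varying $t$ hits every coset of a bounded-index subgroup within $\eta^{-O(1)}$ steps; symmetrically for $\mathbb{Q}_{13}$.  Baker's theorem (Lemma~\ref{lem:quantitativeapproximation}) ensures that varying $s$ adjusts the $\mathbb{C}$-phase of $\theta_5^s\theta_{13}^t$ densely enough that the $\mathbb{C}$-constraint can be reconciled with the arithmetic congruences.

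The main obstacle is coordinating the three constraints within the tight $(s, t)$-ranges, since a shift in $p$ fixing the $\mathbb{Q}_5$-residue may perturb the $\mathbb{Q}_{13}$-residue or the $\mathbb{C}$-position.  The hypothesis $|b|_{13} \le 13^{-C\eta^{-5}}$ is precisely calibrated so that the $\mathbb{Q}_{13}$-component of the orbit stays tame throughout the $t$-range, effectively decoupling the $\mathbb{Q}_{13}$-dynamics from the $\mathbb{Q}_5$-dynamics.  Lemma~\ref{lem:boundedsubgroup} is the critical input: without the bounded-index conclusion, the $t$-range would need to be exponential in $\log \eta^{-1}$ rather than polynomial.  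The exponent $5$ in the $t$-range accommodates the various polynomial factors arising from the sizes of the congruence groups, the $\mathbb{C}$-lattice mesh, and the $O(1)$-ambiguity in $p$ that is used to jump between cosets of the image of the $\theta_{13}$-orbit.
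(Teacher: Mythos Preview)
Your overall architecture is right, and you correctly identify Lemma~\ref{lem:boundedsubgroup} as the engine driving the $\Q_5$-approximation.  But there is a genuine gap in how you handle the $\Q_{13}$-constraint, and the appeal to Baker's theorem is a red herring.

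Write the target as $y=j(a_0,b_0,z_0)$ in the \emph{standard} fundamental domain, so $b_0\in\Z_{13}$ is generic.  The orbit's $\Q_{13}$-coordinate $\iota_{13}(\theta_5^s\theta_{13}^t)b$ has norm $13^t|b|_{13}\le 1$ throughout the allowed $t$-range; indeed the hypothesis $|b|_{13}\le 13^{-C\eta^{-5}}$ makes this coordinate negligibly small, not merely bounded.  So the $\Q_{13}$-constraint reduces to $|\iota_{13}(\alpha)+b_0|_{13}<\eta/3$, a condition on $\alpha$ alone that is \emph{independent of $(s,t)$}.  Your suggestion ``symmetrically for $\Q_{13}$'' --- that varying $s$ moves $\iota_{13}(\theta_5^s)$ through a bounded-index subgroup of $(\Z_{13}/13^N)^\times$ --- is true but useless here, because multiplying a $13$-adically tiny quantity by a unit keeps it tiny.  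Meanwhile, with your parametrisation $\alpha=p/(\overline{P_5}^{n_5}\overline{P_{13}}^{n_{13}})$ and mesh $\sim\eta$, the $\C$-constraint pins $p$ to $O(1)$ choices, which cannot satisfy a congruence modulo $\overline{P_{13}}^{O(\log\eta^{-1})}$ with $\sim\eta^{-O(1)}$ residue classes.

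The paper resolves this by invoking Proposition~\ref{prop:constructionofpartition} to represent the target $x'$ in a \emph{skewed} fundamental domain with $|b'|_{13},|z'|\le\eta/10$ and $|a'|_5\le C_1\eta^{-3}$.  With $b'$ and $z'$ already small, the $\Q_{13}$- and $\C$-constraints are satisfied automatically by any shift $q$ that is small in those places, and only the $\Q_5$-matching remains.  The paper then takes $q=\overline{P_5}^{\ell-m}\cdot 13^\beta a^*$ for a bounded $a^*\in\{0,\dots,5^\alpha-1\}$ (this is your ``$O(1)$ ambiguity to jump between cosets'', but now it is free to operate because the other two constraints have been decoupled), fixes a \emph{single} value $s=m\approx\log_5(\varpi^{-1}\eta^{-4})$, and chooses $t$ via Lemma~\ref{lem:boundedsubgroup}.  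No appeal to Baker's theorem (Lemma~\ref{lem:quantitativeapproximation}) is needed: since $|z|,|z'|\le\eta/10$ and $|q|_\C$ is small by construction, the $\C$-phase of $\theta_5^s\theta_{13}^t$ never enters.
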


We will see in the proof that in fact only a single value of $s$ (of size $\log_5(\varpi^{-1} \eta^{-2})+O(1)$) is necessary.  Unfortunately, the argument is somewhat technical and involves keeping track of the relative sizes of many parameters.

\begin{proof}
Before we begin the main argument, we apply the ``lifting the exponent'' trick, in the style of Lemma~\ref{lem:boundedsubgroup}.  Let $n \in \N$ be a parameter to be chosen later, and let $\pi_n: \mathbb{Z}_5 \to \mathbb{Z}/5^n \mathbb{Z}$ denote the projection map.  Note that $\iota_5(\theta_{13}) \in \mathbb{Z}_5 \setminus 5\mathbb{Z}_5$; we will henceforth write just $\theta_{13}$ instead of $\iota_5(\theta_{13})$ for typographical clarity.  Arguing as in the second paragraph of the proof of Lemma~\ref{lem:boundedsubgroup}, we obtain a non-negative integer $\alpha$ (independent of $n$) such that
$$\pi_n(\{\theta_{13}^t: 0 \leq t \leq 5^n-1\}) \supseteq 1+5^\alpha \mathbb{Z}/5^n \Z.$$
In particular, for each $w \in 1+5^{\alpha} \mathbb{Z}_5$, there is some $0 \leq t \leq 5^n-1$ such that $w-\theta_{13}^t \in 5^n \mathbb{Z}_5$.

We extend this observation as follows.  Let $\ell \in \mathbb{Z}$ be such that $$a \in 5^\ell \Z_5 \setminus 5^{\ell+1}\Z_5.$$  The hypothesis on $|a|_5$ implies that $\ell \leq \log_5(\varpi^{-1})$.
Let $m \in \mathbb{N}$ be a parameter to be chosen later.  Recall that $\theta_5 \in 5^{-1}\Z_5 \setminus \Z_5$, so $\theta_5^m \in 5^{-m}\Z_5 \setminus 5^{-m+1}\Z_5$.  Thus the projection of
$$\{\theta_5^m \theta_{13}^t a: 0 \leq t \leq 5^n-1\}$$
onto $5^{\ell-m}\Z/ 5^{n+\ell-m}\Z$ contains an entire coset $\widetilde{a}+5^{\ell-m+\alpha}\Z$ of $5^{\ell-m+\alpha}\Z / 5^{n+\ell-m}\Z$, and for each $w \in \widetilde{a}+5^{\ell-m+\alpha}\Z_5$, there is some $0 \leq t \leq 5^n-1$ such that $w-\theta_5^m \theta_{13}^t a \in 5^{n+\ell-m} \mathbb{Z}_5$.

We now turn to the main argument.  Fix any point $x' \in X$.  We will find $t$ such that $\dist(x',\theta_5^m \theta_{13}^t x) \leq \eta$.  Applying Proposition~\ref{prop:constructionofpartition}, we can write $x'=j(a',b',z')$ with
$$|a'|_5 \leq C_1 \eta^{-3}, \quad |b'|_{13} \leq \eta/10, \quad \text{and} \quad |z'| \leq \eta/10,$$
where $C_1>0$ is some absolute constant (e.g., $C_1=130000$ suffices).  
Notice that
$$a' \in 5^{\ell'}\mathbb{Z}_5$$
for $\ell':=\lceil -\log_5(C_1 \eta^{-3})\rceil=\log_5(\eta^3)+O(1)$.  Set
$$m:=\ell+\log_5(\eta^{-4})+C_2,$$
for a constant $C_2 \geq 0$ to be chosen later.  Notice that $\ell-m \leq \ell'$ and so $a' \in 5^{\ell-m}\Z_5$.
% \leq \log_5(\varpi^{-1}\eta^{-2})+O(1),$$

The key step is shifting the representation $(a',b',z')$ of $x'$ by $(q,q,q)$, for some suitable element $q \in A$.  We will choose $q$ so that $|q|_{13}, |q|$ are very small and $a'-q$ lies in $\widetilde{a}+5^{\ell-m+\alpha}\Z_5$.  Then we will be able to choose $t$ so that $(a'-q)-\theta_5^m \theta_{13}^t a \in 5^{n+\ell-m} \mathbb{Z}_5$, and we will conclude that $\dist(x',\theta_5^m \theta_{13}^t x) \leq \eta$.  The details are as follows.

Recall that $\overline{P_5} \in 5^{-1}\Z_5 \setminus \Z_5$ and $13 \in \Z_5 \setminus 5\Z_5$.  Set
$$\beta:=\lceil \log_{13}(5^{\alpha+1}\eta^{-1}) \rceil.$$
Now take the unique integer $0 \leq a^* \leq 5^\alpha-1$ such that
$$q:=\overline{P_5}^{\ell-m} \cdot 13^\beta a^* \in A$$
satisfies
$$a'-q \in \widetilde{a}+5^{\ell-m+\alpha}\Z_5.$$
Next, using the argument from the second paragraph of the proof, take $0 \leq t \leq 5^n-1$ such that
\begin{equation}\label{eq:choice-of-t}
(a'-q)-\theta_5^m \theta_{13}^t a \in 5^{n+\ell-m} \mathbb{Z}_5.
\end{equation}
We are ready to assemble the pieces.  Since $x'=j(a'-q,b'-q,z'-q)$, we have
$$\dist(x',\theta_5^m \theta_{13}^t x) \leq |(a'-q)-\theta_5^m \theta_{13}^t a|_5+|(b'-q)-\theta_5^m \theta_{13}^t b|_{13}+|(z'-q)-\theta_5^m \theta_{13}^t z|.$$
We will bound each contribution individually.

Set the parameter
$$n:=\log_5 (\eta^{-5})+C_2+1,$$
where $C_3$ is a constant (depending on $C_2$) to be determined later.  For the $\Q_5$-contribution, \eqref{eq:choice-of-t} gives
$$|(a'-q)-\theta_5^m \theta_{13}^t a|_5 \leq 5^{-n-\ell+m}=\eta/5$$
which is at most $\eta/5$ (say) as For the $\Q_{13}$-contribution, the Triangle Inequality gives
$$|(b'-q)-\theta_5^m \theta_{13}^t b|_{13} \leq \max(\eta/10, 5^\alpha \cdot 13^{-\beta}, 13^t \cdot 13^{-C \eta^{-4}}) %\leq \eta/5+13^{5^n-C \eta^{-3}}
.$$
The second term is at most $\eta/10$ by the choice of $\beta$.  The third is at most $13^{5^n-C\eta^{-5}}$, and this is comfortably smaller than $\eta/10$ as long as $C$ is large enough relative to $C_3$.
Finally, for the $\C$-contribution, the Triangle Inequality gives
$$|(z'-q)-\theta_5^m \theta_{13}^t z|_{\C} \leq \eta/10+5^{(\ell-m)/2} \cdot 13^\beta \cdot 5^\alpha+\eta/10 \leq \eta/5+O(5^{-C_2}\eta).$$
The last of these three terms is at most $\eta/5$ (giving a total of at most $2\eta/5$ for the $\C$-contribution) as long as $C_2$ is sufficiently large.  Combining all three of these contributions, we conclude that
$$\dist(x',\theta_5^m \theta_{13}^t x) \leq \eta/5+\eta/5+2\eta/5<\eta.$$
This concludes the proof.
\end{proof}

\subsection{Entropy input}\label{subs:entropy}
The objective of this subsection is to prove Theorem~\ref{thm:entropyweirdpartition}, which is the ``$\times \theta_5, \times \theta_{13}$ analogue'' of the main $\times 2, \times 3$ result of~\cite[Theorem 3.2]{BLMV08}.  We will follow the structure of the argument from~\cite{BLMV08} quite closely.  Before we can state our theorem, we need several pieces of notation.

First, for $\mathcal{P}$ a finite partition of $X$ and $\mu$ a probability measure on $X$, let $$H_{\mu}(\mathcal{P}):=-\sum_{P \in \mathcal{P}}\mu(P) \log(\mu(P))$$ (with the convention $0 \log(0)=0$) denote the \emph{entropy} of $\mathcal{P}$ with respect to $\mu$.

Second, for $\mu$ a measure on $X$ and $s,t$ nonnegative integers, we write $(\theta_5^s \theta_{13}^t)_*\mu$ for the pushforward of $\mu$ under the $\times \theta_5^s \theta_{13}^t$ map; explicitly, if we denote the $\times \theta_5^s \theta_{13}^t$ map by $\psi$, then $((\theta_5^s \theta_{13}^t)_*\mu)(B):=\mu(\psi^{-1}(B))$ for every measurable set $B \subseteq X$.

Finally, for $f: X \to \C$ a smooth function, write $\mu(f):=\int_X f(x) \,d\mu(x)$.  In this notation, we have $((\theta_5^s \theta_{13}^t)_*\mu)(f)=\int_X f(\theta_5^s \theta_{13}^t x) \, d\mu(x)$.  We use $m$ to denote the probability Haar measure on $X$ (defined on the fundamental domain $\Z_5 \times \Z_{13} \times [0,1) \times [0,1)i$ as the product of the Haar measures on the individual components). Recall the Sobolev norm $\|f\|_{H^{10}}^2:=\sum_{\eta \in A} \langle \eta \rangle^{10} |\widehat{f}(\eta)|^2$ and the shorthand $\langle \eta \rangle:=|\eta|_5+|\eta|_{13}+|\eta|_\mathbb{C}$.

\begin{theorem}\label{thm:entropyweirdpartition}
There is a constant $\kappa>0$ such that the following holds.
Suppose $\rho, \delta>0$ and $n,T \in \mathbb{N}$ satisfy
$$\frac{10}{\log_5(T)} \leq\delta \leq \frac{\rho}{10} \quad \text{and} \quad 5^{20/\delta} \leq T \leq \frac{\log 5}{4\log 13}\cdot \delta n.$$
Set $N:=5^n$, and let $\mathcal{P}_{N,T}$ denote the partition produced by Proposition~\ref{prop:constructionofpartition} with the parameters $$(N_5,N_{13},N_{\C}):=\left(\frac{13^{T/4}}{N^{1+3\delta/4}}, \frac{1}{N^{\delta/4} \cdot 13^{3T/4}}, \sqrt{130} \cdot N^{\delta/2} \cdot 13^{T/4}  \right).$$
Suppose $\mu$ is a probability measure on $X$ with $H_{\mu}(\mathcal{P}_{N,T})\geq \rho \log(N)$.  Then for every nonnegative smooth function $f: X \to \C$, there are integers $s,t$ with $$0 \le s \le (1 - \delta)\log_5(N) \quad \text{and} \quad 0 \le t \le T$$ such that
$$(\theta_5^s \theta_{13}^t)_*\mu(f) \ge \left(\rho - 2\delta\right)m(f) - \kappa T^{-\delta/2}\|f\|_{H^{10}}-\kappa N^{-\delta/4} \cdot 13^{T/4} \Lip(f).$$
%
%Let $\delta, \varpi, \eta$ be real numbers satisfying
%$$\frac{10}{\log_5(N)} < \varpi, \delta < \frac{\rho}{20}, 5^{20/\delta} < T < \frac{\delta}{4}\log_5(N)$$
%with $\eta = \frac{\varepsilon}{100}$ and take $\mathcal{P}_N$ to be the partition in Proposition~\ref{prop:constructionofpartition} with these choices of $(\varpi, \eta)$. If $H_\mu(\mathcal{P}_N) = \rho$, there exist integers $s, t$ with $0 \le s \le (1 - \delta)\log_5(N)$, $0 \le t \le T$, and constant $\beta > 0$ satisfying
%$$(\theta_5^s \theta_{13}^t)_*\mu(f) \ge \left(\rho - 2\varpi\right)m(f) - O(T^{-\beta\delta}(\|f\|_{H^p} + \|f\|_{\mathrm{Lip}(X)}))$$
%for some Sobolev norm $\|\cdot \|_{H^p}$.
\end{theorem}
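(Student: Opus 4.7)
Our approach adapts the proof of the main entropy theorem of~\cite[Theorem 3.2]{BLMV08} (the $\times 2, \times 3$ result on $\mathbb{R}/\mathbb{Z}$) to the setting of $X$ and the $\times \theta_5, \times \theta_{13}$ action. The key tools are Fourier analysis on $X = \widehat{A}$, a decomposition of $\mu$ into a quasi-uniform part of mass $\ge \rho - \delta$ and a nonnegative remainder, and Baker's theorem (via Corollary~\ref{cor:baker}) to exploit the multiplicative independence of $\theta_5, \theta_{13}$. First I would expand $f = \sum_{\eta \in A} \widehat{f}(\eta) \chi_\eta$, and use the identity $\widehat{(\theta_5^s \theta_{13}^t)_* \mu}(\eta) = \widehat{\mu}(\theta_5^s \theta_{13}^t \eta)$ to write
$$(\theta_5^s \theta_{13}^t)_*\mu(f) = m(f) + \sum_{\eta \neq 0} \widehat{f}(\eta)\, \widehat{\mu}(\theta_5^s \theta_{13}^t \eta).$$
Truncate the sum to $\{\eta \in A : \langle \eta \rangle \leq K\}$ with $K$ a small power of $T$; the tail contributes at most $\kappa T^{-\delta/2} \|f\|_{H^{10}}$ by Cauchy--Schwarz in the Sobolev norm. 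Within each cell of $\mathcal{P}_{N,T}$ I would replace $f$ by its cellwise average, losing $\kappa N^{-\delta/4} \cdot 13^{T/4} \Lip(f)$ because the cells have $\mathbb{C}$-diameter $N_\mathbb{C} = \sqrt{130} \cdot N^{\delta/2} \cdot 13^{T/4}$ (and much smaller diameter in the non-Archimedean directions).

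Next I would use the entropy bound $H_\mu(\mathcal{P}_{N,T}) \geq \rho \log N$ in a Shannon--McMillan style pigeonhole to discard cells with anomalously large or small $\mu$-mass, producing a subcollection $\mathcal{G} \subseteq \mathcal{P}_{N,T}$ and a probability measure $\nu$ of approximately uniform density on $\cup \mathcal{G}$ satisfying $\mu \ge (\rho - \delta) \nu$. Since $f \ge 0$ and pushforwards preserve positivity, $(\theta_5^s \theta_{13}^t)_*\mu(f) \ge (\rho - \delta)(\theta_5^s \theta_{13}^t)_*\nu(f)$, so it suffices to show that some pushforward of $\nu$ integrates $f$ to at least $m(f)$ up to the stated error terms. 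The heart of the argument is then to bound, for each truncated nonzero $\eta$, the $L^2$ average
$$\frac{1}{(S+1)(T+1)} \sum_{s=0}^{S} \sum_{t=0}^{T} |\widehat{\nu}(\theta_5^s \theta_{13}^t \eta)|^2 \ll T^{-\delta/2},$$
with $S := \lfloor (1-\delta) \log_5 N \rfloor$, using flatness of $\nu$ on $\mathcal{P}_{N,T}$ (which forces $\widehat{\nu}$ to decay at large non-Archimedean frequencies) together with Corollary~\ref{cor:baker} to prevent the $\mathbb{C}$-rotations by $\theta_5^s \theta_{13}^t$ from producing pathological resonances in the Archimedean coordinate. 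Pigeonholing in $(s,t)$ then selects a specific good pair.

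The main obstacle will be carrying through the $L^2$ averaging quantitatively with the highly skewed, non-self-similar partition $\mathcal{P}_{N,T}$. BLMV's argument on $\mathbb{R}/\mathbb{Z}$ exploits the clean self-similarity of $\mathcal{P}_M$ under $\times a, \times b$, but $\mathcal{P}_{N,T}$ is not self-similar under $\times \theta_5, \times \theta_{13}$, and the three coordinates of $X$ scale asymmetrically (by $5, 1, 1$ for $\theta_5$ and $1, 13, 1$ for $\theta_{13}$, plus rotations in $\mathbb{C}$). The precise calibration of $(N_5, N_{13}, N_\mathbb{C})$ in the theorem is engineered so that the $\mathbb{Q}_5$-scale matches the allowed range of $s$, the $\mathbb{Q}_{13}$-scale matches the range of $t$, and the $\mathbb{C}$-diameter fits inside the Lipschitz error budget; making all the $\delta$ factors line up without collapse across the entropy extraction, the Fourier truncation, and the Baker estimate is the technical crux.
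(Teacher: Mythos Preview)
Your proposal misidentifies the central mechanism. Baker's Theorem plays no role in the paper's proof of this result; it appears elsewhere (Sections~\ref{s:supermajor} and~\ref{s:irrationalargument}) but not here. The entropy argument is entirely algebraic and non-Archimedean: the Archimedean coordinate is handled solely by the Lipschitz error at the discretization step, since $|\theta_5|_{\mathbb{C}}=|\theta_{13}|_{\mathbb{C}}=1$ means the $\mathbb{C}$-component does not scale at all under the action. There are no ``pathological resonances'' to worry about in the Archimedean direction, and Corollary~\ref{cor:baker} provides no leverage on the quantity $\sum_{s,t}|\widehat{\nu}(\theta_5^s\theta_{13}^t\eta)|^2$ that you propose to bound.

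The paper instead proceeds by discretizing $\mu$ to a measure $\mu'$ on the finite group $\theta_5^{-n}A/A\cong\mathbb{Z}/5^n\mathbb{Z}$ (Lemma~\ref{lem:equivalence}), then applying two ingredients that are finer than your Shannon--McMillan extraction. First, BLMV's Lemma~3.6 (imported via Lemma~\ref{lem:partitiontrick}) uses the entropy hypothesis to find a single $s$ such that $(\theta_5^s)_*\mu'$ dominates a combination $\sum_i w_i\nu_i$ where each $\nu_i$ is supported on a coset of the smaller subgroup $\theta_5^{-\ell}A/A$ and $\sum_i w_i\|\nu_i\|_2^2$ is small; a single quasi-uniform $\nu$ with $\mu\ge(\rho-\delta)\nu$ does not carry this $L^2$ control and would not suffice. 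Second, Theorem~\ref{thm:rigidity} shows that averaging $(\xi_*\nu_i)(f)$ over $\xi$ in the multiplicative subgroup $S_{13}(\ell)\subseteq (A/\theta_5^\ell A)^\times$ generated by $\theta_{13}$ drives it toward $m(f)$; the key input is Lemma~\ref{lem:boundedsubgroup} (a ``lifting the exponent'' argument) showing $S_{13}(\ell)\supseteq 1+P_5^\alpha(A/P_5^\ell A)$, which forces large cancellation in the relevant character sums. Combining these via Cauchy--Schwarz (Proposition~\ref{cor:discreteblmv}) and then undoing the discretization (this is where the skewed cell diameters enter, producing the $\Lip(f)$ term) gives the theorem. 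Your joint $L^2$ average over both $s$ and $t$ does not separate these two roles, and without the coset-supported $\nu_i$'s and the group structure of $S_{13}(\ell)$ there is no visible source of the required cancellation.
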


We will use this theorem in the form of the following corollary.

\begin{corollary}[Quantitative denseness]\label{cor:quantitativedenseness}
There is a constant $c>0$ such that the following holds.  Let $0<\rho<1$, and let $n \in \mathbb{N}$ be sufficiently large in terms of $\rho$.  Set $\delta:=\rho/10$ and $N:=5^n$ and $T:=\frac{\log 5}{4 \log 13} \cdot \delta n$, and let $\mathcal{P}_{N,T}$ be the partition from Theorem~\ref{thm:entropyweirdpartition}.  Suppose $S \subseteq X$ is a set that intersects at least $N^\rho$ parts of the partition $\mathcal{P}_{N,T}$.  Then the orbit
%Let $N = 5^n$ and take $\mathcal{P}_{N,T}$ to be the partition defined above. Suppose a subset $S$ of $X$ intersects $N^\rho$ distinct elements of $\mathcal{P}_{N,T}$. Then for any open set $U$ which is the image of $B_{\mathbb{Q}_5} \times B_{\mathbb{Q}_{13}} \times B_{\mathbb{C}}$ (with $B_{A}$ an open ball in $A$) under the quotient map, there exist $\theta = \theta_5^s \theta_{13}^t$ with $s \le \log_5(N)$, $t \le \log_{13}(N)$, and constant $c, c'$ such that
%$$\frac{\lambda(\theta S \cap U)}{\lambda(S)} \ge \rho \lambda(U) - c\log(N)^{c\rho}.$$
%Moreover,
$$\{\theta_5^s \theta_{13}^t S: 0 \le s,t \le n\}$$
is $n^{-c\rho}$-dense in $X$.
\end{corollary}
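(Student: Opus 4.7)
The plan is to encode the set $S$ as a probability measure $\mu$ with large partition entropy and then feed a suitable bump function at an arbitrary target point $x_0 \in X$ into Theorem~\ref{thm:entropyweirdpartition}. For the entropy step, let $S' \subseteq S$ contain exactly one point from each of the (at least $N^\rho$) parts of $\mathcal{P}_{N,T}$ that $S$ meets, and set $\mu := \frac{1}{|S'|}\sum_{x \in S'} \delta_x$. Since each part contains at most one atom of $\mu$, one has $H_\mu(\mathcal{P}_{N,T}) = \log |S'| \geq \rho \log N$. With $\delta = \rho/10$ and $T = \frac{\log 5}{4 \log 13}\delta n$, the numerical hypotheses $\tfrac{10}{\log_5 T} \leq \delta$ and $5^{20/\delta} \leq T$ of Theorem~\ref{thm:entropyweirdpartition} both hold once $n$ is taken large enough in terms of $\rho$.

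For the bump function, fix an arbitrary target $x_0 \in X$ and a scale $\gamma := n^{-c\rho}$, where $c$ is a small absolute constant to be determined. Apply Lemma~\ref{lem:smoothapproximation} with $u := \lceil \log_5 \gamma^{-1} \rceil$ and $v := \lceil \log_{13} \gamma^{-1} \rceil$ to produce a nonnegative bump function $f_0: X \to \R$ supported on a set of $X$-diameter at most $3\gamma$, and put $f(y) := f_0(y - x_0)$. Translation by $x_0$ is an isometry of $X$ and only multiplies Fourier coefficients by phases, so $m(f), \|f\|_{H^{10}}, \Lip(f)$ equal the corresponding quantities for $f_0$. A direct computation from the definition of the support, together with Lemma~\ref{lem:smoothapproximation}, yields
$$m(f) \gg \gamma^4, \qquad \|f\|_{H^{10}} \ll \gamma^{-21}\log(\gamma^{-1}), \qquad \Lip(f) \ll \gamma^{-1}.$$

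Applying Theorem~\ref{thm:entropyweirdpartition} to $\mu$ and $f$ produces integers $0 \leq s \leq (1-\delta)n \leq n$ and $0 \leq t \leq T \leq n$ satisfying
$$(\theta_5^s \theta_{13}^t)_*\mu(f) \;\geq\; \tfrac{4\rho}{5}\, m(f) \;-\; \kappa\, T^{-\delta/2}\|f\|_{H^{10}} \;-\; \kappa\, N^{-\delta/4}\cdot 13^{T/4}\Lip(f).$$
Using $13^{T/4} = N^{\delta/16}$, the last error is at most $\kappa N^{-3\delta/16}\gamma^{-1}$, which is exponentially small in $n$ and hence utterly negligible against $\rho\gamma^4$. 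The middle error is at most $\kappa(\rho n)^{-\rho/20}\gamma^{-21}$ up to log factors, and choosing $c$ small enough (e.g.\ $c = 1/1000$) makes it dominated by $\rho\gamma^4$ as well. Hence the right-hand side is strictly positive, so some $y \in S$ satisfies $f(\theta_5^s \theta_{13}^t y) > 0$; by the support property of $f_0$ this forces $\dist(\theta_5^s\theta_{13}^t y, x_0) \leq 3\gamma = 3n^{-c\rho}$. Since $x_0$ was arbitrary, the claimed denseness follows after a harmless adjustment of $c$. The main obstacle is precisely this balancing act in Step 3: the Sobolev-norm term gives the binding constraint $\gamma^{25} \gg T^{-\delta/2}$, which together with $T \asymp \rho n$ is what forces the $n^{-c\rho}$ shape of the conclusion and pins down the admissible value of $c$, while the Lipschitz term is negligible thanks to the engineered $N^{\delta/2}$ factor in the complex diameter of $\mathcal{P}_{N,T}$.
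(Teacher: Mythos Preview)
Your proof is correct and follows essentially the same approach as the paper: construct $\mu$ as the uniform average of one point per occupied cell, apply Lemma~\ref{lem:smoothapproximation} to get a bump function at scale $\gamma=n^{-c\rho}$ centered at an arbitrary target $x_0$, feed this into Theorem~\ref{thm:entropyweirdpartition}, and balance the Sobolev-norm error $T^{-\delta/2}\|f\|_{H^{10}}$ against the main term $\rho\,m(f)$ to pin down a small admissible~$c$. The only cosmetic differences are in bookkeeping (the paper records the cruder bound $\|f\|_{H^{10}}\ll n^{44c\rho}$ and takes $c=1/10000$, while you use the sharper $\gamma^{-21}\log(\gamma^{-1})$ and $c=1/1000$), but the structure and the key inputs are identical.
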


\begin{proof}
We will show that for each point $x_0 \in X$, the orbit $\{\theta_5^s \theta_{13}^t S: 0 \le s,t \le n\}$ intersects the ball of radius $n^{-c\rho}$ centered at $x_0$.
For each part $P \in \mathcal{P}_{N,T}$ that intersects $S$, choose some point $x_P \in P \cap S$; let $\mu$ be the (unweighted) average of the point masses at these $x_P$'s.  Thus $H_\mu(\mathcal{P}_{N,T}) \ge \rho \log(N)$.

Let $f: X \to \mathbb{R}_{\geq 0}$ be the bump function around $0$ produced by Lemma~\ref{lem:smoothapproximation} with $u:=\lceil c\rho \log_5 n \rceil+1$, $v:=\lceil c\rho \log_{13} n \rceil+1$, and $\gamma:=n^{-c\rho}/10$.  Fix some $x_0 \in X$, and let $\widetilde f:=f(\cdot-x_0)$ be the shift of $f$ centered at the point $x_0$.  The estimates of Lemma~\ref{lem:smoothapproximation} still apply verbatim and give
$$\|\widetilde{f}\|_{H^{10}} \ll n^{44c\rho} \quad \text{and} \quad \Lip(\widetilde f) \ll n^{c\rho}.$$
Now, applying Theorem~\ref{thm:entropyweirdpartition} to the function $\widetilde f$ provides integers $0 \leq s,t \leq n$ (note that $T \leq n$) such that
\begin{align*}
(\theta_5^s \theta_{13}^t)_*\mu(\widetilde f) &\ge \left(\rho - 2\delta\right)m(\widetilde f) - O(T^{-\delta/2}\|\widetilde f\|_{H^{10}}+N^{-\delta/4} \cdot 13^{T/4} \Lip(\widetilde f))\\
 & \geq 10^{-6}\rho n^{-4c\rho}-O((\rho n)^{-\rho/20}n^{44c\rho}+5^{-n\rho/20} n^{c\rho}).
\end{align*}
This quantity is strictly positive as long as long as $c$ is sufficiently small (e.g., $c=1/10000$ works).  Since $\widetilde f$ is supported in the $n^{-c\rho}$-ball around $x_0$, we conclude that $x_0$ lies within $n^{-c\rho}$ of the orbit $\{\theta_5^s \theta_{13}^t S: 0 \le s,t \le n\}$, as desired.
%
%For each $\log(N)^{c'}$-neighborhood $B$ of $X$, let $f_B$ be a compactly supported function on that neighborhood such that choosing a fundamental domain of the form $\mathbb{Z}_5 \times \mathbb{Z}_{13} \times [a, a + 1) \times [b, b + 1)$ that $B$ fully lies in, $B$ is of the form $(u + 5^a\mathbb{Z}_5) \times (v + 5^b\mathbb{Z}_{13}) \times B_c^\mathbb{C}(w)$. Letting $(f_B)|_B$ only depend on the complex coordinate, we see that we may take $\|f\|_{\mathrm{Lip}(X)} \ll \log(N)^{c'}$, $\|f\|_{H^p} \ll \log(N)^{c'p}$. Applying Lemma~\ref{lem:smoothapproximation} and Theorem~\ref{thm:entropyweirdpartition}, we are done.
\end{proof}
%We shall deduce Theorem~\ref{thm:entropyweirdpartition} from the Corollary~\ref{cor:discreteblmv}, which will be deduced from the following.

To establish Theorem~\ref{thm:entropyweirdpartition}, we will establish a discretized version for measures and functions on sets of the form $\gamma+(j \circ \iota^{\Delta})((\theta_5^{-n}A)/A) \subseteq X$; for notational simplicity, we will write $\theta_5^{-n}A/A$ instead of $(j \circ \iota^{\Delta})((\theta_5^{-n}A)/A)$.

The proof has two main ingredients.  The first ingredient regards the behavior ``on average'' of $\xi_* \mu$, where $\mu$ is a probability measure on $\gamma+(\theta_5^{-n}A)/A$ and $\xi$ ranges over $S_{13}(n)$ (recall that $S_{13}(n)$ is the subgroup of $A/\theta_5^n A$ generated by $\theta_{13}$).

\begin{theorem}\label{thm:rigidity}
There is a constant $C>0$ such that the following holds.  Let $\gamma \in X$, and let $\mu$ be a probability measure on $\gamma+\theta_5^{-n}A/A \subseteq X$.  Then for every smooth function $f$ on $X$, we have
$$\E_{\xi \in S_{13}(n)} |(\xi_* \mu)(f) - m(f)|^2 \leq C \|f\|_{H^{10}}^2\|\mu\|_2^2.$$
%
%Let $\gamma \in X = \widehat{A}$ be arbitrary and let $\mu$ be a probability measure on $A\theta_5^{-n}/A$, when considered as a subset of $X$. %Let $S_{\theta_{13}}$ be the subgroup of $(A/\theta_5^nA, \times)$ generated by $\theta_{13}$. 
%Then for any $f \in C^\infty(X)$, we have
%$$\E_{\xi \in S_{13}} |\xi_* \mu(f) - m(f)|^2 \ll \|f\|_{H^p}^2\|\mu\|_2^2$$
%where $\|f\|_{H^p}$ is a suitable Sobolev norm and $m$ denotes the Haar measure on $X$.
\end{theorem}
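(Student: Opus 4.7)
The plan is a direct Fourier calculation on $X = \widehat A$. Since characters of $X$ are indexed by $\eta \in A$ and multiplication by $\xi \in A$ on $X$ is dual to multiplication by $\xi$ on $A$, Fourier expansion of $f$ together with the pushforward formula gives
$$
(\xi_*\mu)(f) = \sum_{\eta \in A} \widehat f(\eta)\, \overline{\widehat\mu(\xi\eta)}.
$$
The $\eta = 0$ term is $m(f)$, so Cauchy--Schwarz with the weights $\langle\eta\rangle^{\pm 10}$ followed by averaging over $\xi$ reduces the theorem to
$$
\sum_{\eta \neq 0} \langle\eta\rangle^{-10}\, \E_{\xi \in S_{13}(n)}|\widehat\mu(\xi\eta)|^2 \ll \|\mu\|_2^2.
$$
The crucial structural input is that $\mu$ is supported on a coset of $\theta_5^{-n}A/A$, so $|\widehat\mu(\eta)|$ depends on $\eta$ only through $\bar\eta := \eta \bmod P_5^n A$, and Plancherel on the finite group $\theta_5^{-n}A/A$ (via Lemma~\ref{lem:dual}) yields $\sum_{\bar\eta \in A/P_5^n A}|\widehat\mu(\bar\eta)|^2 = 5^n \|\mu\|_2^2$.

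Next I would split the sum according to whether $\eta \in P_5^n A$. For $\eta \in P_5^n A \setminus\{0\}$ the factor $|\widehat\mu(\xi\eta)|^2$ equals $1$, yielding the contribution $\sum_{\eta \in P_5^nA \setminus\{0\}}\langle\eta\rangle^{-10}$. For $\eta \notin P_5^n A$, set $k := v_{P_5}(\eta) \in [0, n-1]$. The map $\xi \mapsto \xi\bar\eta$ on $S_{13}(n)$ has fibers of size at most $5^k$, because $(\xi_1 - \xi_2)\bar\eta \equiv 0 \pmod{P_5^n A}$ with $v_{P_5}(\bar\eta) = k$ forces $\xi_1 \equiv \xi_2 \pmod{P_5^{n-k}}$; Plancherel together with the bound $|S_{13}(n)| \geq 5^{n-\alpha}$ from Lemma~\ref{lem:boundedsubgroup} then gives
$$
\E_\xi|\widehat\mu(\xi\bar\eta)|^2 \leq \frac{5^k}{|S_{13}(n)|} \sum_{\eta' \in A/P_5^n A}|\widehat\mu(\eta')|^2 \ll 5^k \|\mu\|_2^2.
$$

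Both contributions are then controlled via the product formula for $\mathbb{Q}(i)$: for any nonzero $\eta \in A$, the fact that $|\eta|_P \leq 1$ at all finite primes $P \neq \overline{P_5}, \overline{P_{13}}$ gives
$$
|\eta|_\C^2\, |\eta|_5\, |\eta|_{13} \geq 5^{v_{P_5}(\eta)}\, 13^{v_{P_{13}}(\eta)},
$$
and AM--GM then yields $\langle\eta\rangle \gtrsim 5^{v_{P_5}(\eta)/4}\, 13^{v_{P_{13}}(\eta)/4}$. A volume count in the parameters $v_{\overline{P_5}}(\eta), v_{\overline{P_{13}}}(\eta)$, and the remaining Gaussian-integer factor shows that for each $k, l \geq 0$,
$$
\sum_{v_{P_5}(\eta) = k,\, v_{P_{13}}(\eta) = l}\langle\eta\rangle^{-10} \ll \frac{1}{5^{5k/2}\, 13^{5l/2}}.
$$
This makes the $\eta \in P_5^n A$ contribution at most $O(5^{-5n/2}) \leq \|\mu\|_2^2$ (using $\|\mu\|_2^2 \geq 5^{-n}$), while the remaining weighted sum $\sum_{k, l \geq 0} 5^k \cdot 5^{-5k/2}\, 13^{-5l/2}$ is convergent. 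The main technical obstacle is balancing the $5^k$ loss from non-injectivity of $\xi \mapsto \xi\bar\eta$ at $P_5$-ramified residues against this polynomial decay: the product formula trades finite-place ramification for archimedean growth, providing precisely the rate needed to close the argument.
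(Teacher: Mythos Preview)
Your proposal is correct and follows essentially the same route as the paper: Fourier expand $f$, apply Cauchy--Schwarz with the weights $\langle\eta\rangle^{\pm 10}$, establish the pointwise bound $\E_{\xi}|\widehat\mu(\xi\eta)|^2 \ll 5^{v_{P_5}(\eta)}\|\mu\|_2^2$, and then show the resulting weighted sum over $\eta$ converges. The only notable tactical difference is in the pointwise bound---the paper expands $|\widehat{\xi_*\mu}(\eta)|^2$ as a sum over pairs $(s,s')$, uses the additive invariance of $S_{13}(n)$ under $\theta_5^\alpha$ to show $\E_\xi e(\eta(\xi(s-s')))$ vanishes unless $s-s' \in U(\eta)$, and applies Cauchy--Schwarz on the weights $w_s$; your fiber-count-plus-Plancherel argument is a clean alternative that reaches the same estimate with the same $5^\alpha$ loss.
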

\begin{proof}
The first step is showing that all of the nontrivial Fourier coefficients of the measure $\xi_* \mu$ are small on average (over $\xi$).
Recall the isomorphism $\widehat{\theta_5^{-n}A/A} \cong A/\theta_5^n A$ from from Lemma~\ref{lem:dual}, in which we associate $\eta \in A/\theta_5^n A$ with the character
$$\eta(x):= \{\iota_{5}(\eta x)\}_5 + \{\iota_{13}(\eta x)\}_{13} - \Re(\eta x) \pmod{1}.$$
Lemma~\ref{lem:boundedsubgroup} gives us a nonnegative integer $\alpha$ such that $S_{13}(n)$ contains $1+\theta_5^\alpha(A/\theta_5^n A)$ as a subgroup.  Since each multiplicative coset of $1+\theta_5^\alpha(A/\theta_5^n A)$ is an additive coset of $\theta_5^\alpha(A/\theta_5^n A)$, we see that $S_{13}(n)$ is invariant under adding $\theta_5^\alpha$.  It follows that for any $x \in \theta_5^{-n}A/A$ and any character $\eta \in A/\theta_5^n A$, we can write
\begin{align*}
\E_{\xi \in S_{13}(n)} e(\eta(\xi x)) &=e(\eta(\theta_5^\alpha x))\E_{\xi \in S_{13}(n)} e(\eta(\xi x))\\
 &= e(\{\iota_{5}(\theta_5^\alpha\eta x)\}_5 + \{\iota_{13}(\theta_5^\alpha \eta x)\}_{13} - \Re(\theta_5^\alpha \eta x)) \E_{\xi \in S_{13}(n)} e(\eta(\xi x))
\end{align*}
(note that $\xi x \in \theta_5^{-n}A/A$ is well-defined).
Thus the expectation over $\xi$ vanishes whenever $e(\eta(\theta_5^\alpha x)) \neq 1$, and \cite[Proposition 6.2]{Man13} tells us that this occurs precisely when $\theta_5^\alpha \eta x \notin A$.
%Lemma~\ref{lem:boundedsubgroup} gives the estimate
%\begin{align*}
%\left|\E_{\xi \in S_{13}(n)} e(\eta(\xi x))\right|&=\left|\E_{\xi \in S_{13}(n)} e(\{\iota_{\mathbb{Q}_5}(\eta\xi x)\}_5 + \{\iota_{\mathbb{Q}_{13}}(\eta \xi x)\}_{13} - \Re(\eta \xi x))\right|\\
% &\le \begin{cases} 0, & \text{if }\theta_5^\alpha \eta x \not\in A;\\  1, & \text{if }\theta_5^\alpha \eta x \in A,\end{cases}
%\end{align*}
%where $\alpha$ is the nonnegative integer from Lemma~\ref{lem:boundedsubgroup}.
%Let $x \in A\theta_5^{-n}/A$ and $\eta \in \widehat{A\theta_5^{-n}/A}$. By Lemma~\ref{lem:dual}, we can identify $\eta$ with an element of $A/\theta_5^n A$; with this identification, we see from Lemma~\ref{lem:boundedsubgroup} that
%$$\left|\E_{\xi \in S_{13}} e(\{\iota_{\mathbb{Q}_5}(\eta\xi x)\}_5 + \{\iota_{\mathbb{Q}_{13}}(\eta \xi x)\}_{13} - \Re(\eta \xi x))\right| \le \begin{cases} 0 & \theta_5^\alpha \eta x \not\in A\\  1 & \theta_5^\alpha \eta x \in A.\end{cases}$$
%For brevity, we denote 
%$$\eta(n) = \{\iota_{\mathbb{Q}_5}(\eta n)\}_5 + \{\iota_{\mathbb{Q}_{13}}(\eta n)\}_{13} - \Re(\eta n).$$

For each $s \in \theta_5^{-n}A/A$, write $w_s:= \mu(\{\gamma+s\})$.
For each character $\eta \in A/\theta_5^n A$, we have
\begin{align*}
\E_{\xi \in S_{13}(n)} |\widehat{\xi_*\mu}(\eta)|^2 &=\E_{\xi \in S_{13}(n)} \left| \int_X e(\eta(x)) \,d(\xi_* \mu)(x) \right|^2 \\ 
&= \E_{\xi \in S_{13}(n)} \left|\sum_{s \in \theta_5^{-n}A/A}w_s e(\eta(\xi( \gamma+s))) \right|^2 \\
&= \sum_{s, s'} w_s w_{s'} \E_{\xi \in S_{13}(n)} e(\eta(\xi(s - s'))) \\
& \leq \sum_{s, s'} w_s w_{s'} {\bf 1}_{U(\eta)}(s-s'),
% &=\sum_{s} w_s \sum_{s': \theta_5^\alpha\eta(s-s') \in A} w_{s'}.
\end{align*}
%&\le |\{s'' \in \theta_5^{-n}A/A: \theta_5^\alpha\eta s'' \in A\}|\sum_s w_s^2 \\
%&\le 5^{\alpha+v_{P_5}(\eta)} \cdot \|\mu\|_2^2.
%\end{align*}
where $U(\eta):=\{t \in \theta_5^{-n}A/A: \theta_5^\alpha\eta t \in A\}$.
Applying the Cauchy--Schwarz Inequality, we can bound the final quantity from the last centered equation as
\begin{align*}
\left(\sum_{s,s'} w_s w_{s'} {\bf 1}_{U(\eta)}(s-s')\right)^2 & \leq \sum_s w_s^2 \cdot \sum_s \left(\sum_{s'} w_{s'} {\bf 1}_{U(\eta)}(s-s')\right)^2\\
 &= \sum_s w_s^2 \cdot \sum_s \sum_{s',s''} w_{s'}w_{s''}{\bf 1}_{U(\eta)}(s-s'){\bf 1}_{U(\eta)}(s-s'')\\
 &=\sum_s w_s^2 \cdot \sum_{s',s''} w_{s'}w_{s''}{\bf 1}_{U(\eta)}(s'-s'')\sum_{s}{\bf 1}_{U(\eta)}(s-s')\\
 &=|U(\eta)|\sum_{s}w_s^2 \cdot \sum_{s',s''} w_{s'} w_{s''} {\bf 1}_{U(\eta)}(s'-s'').
\end{align*}
(The second equality uses the fact that $U$ is a group, and the third equality uses the fact that $\sum_{s}{\bf 1}_{U(\eta)}(s-s')=|U(\eta)|$ for all $s'$.)  Dividing out the common term from the two sides of this inequality and using the count $|U|=5^{\min(\alpha+v_{P_5}(\eta),n)}$, we deduce that
$$\E_{\xi \in S_{13}(n)} |\widehat{\xi_*\mu}(\eta)|^2 \leq \sum_{s,s'} w_s w_{s'} {\bf 1}_{U(\eta)}(s-s') \leq |U(\eta)|\sum_{s}w_s^2= 5^{\min(\alpha+v_{P_5}(\eta),n)} \|\mu\|_2^2.$$
This completes the first step of the proof.

For the second step, Fourier-expand the function $f$ as
$$f(x) = \sum_{\eta \in A} \widehat{f}(\eta)e(\eta(x))$$
(recall that $\widehat X$ is canonically isomorphic to $A$).  Then, by Parseval's Identity and the Cauchy--Schwarz Inequality, we have
\begin{align*}
\E_{\xi \in S_{13}(n)} |(\xi_* \mu)(f) - m(f)|^2 &= \E_{\xi \in S_{13}(n)}\left|\sum_{0 \neq \eta \in A} \widehat{f}(\eta) \widehat{\xi_*\mu}(\eta)\right|^2 \\
&\le \E_{\xi \in S_{13}(n)} \sum_{\eta \neq 0} \frac{|\widehat{\xi_*\mu}(\eta)|^2}{\langle \eta \rangle^{10}} \cdot \sum_{\eta \neq 0} \langle \eta \rangle^{10} |\widehat{f}(\eta)|^2 \\
&\le \|f\|_{H^{10}}^2\|\mu\|_2^2 \sum_{\eta \neq 0} \frac{5^{\min(\alpha+v_{P_5}(\eta),n)}}{\langle \eta \rangle^{10}}.
%&\le 5^\alpha \|f\|_{H^{10}}^2\|\mu\|_2^2 \sum_{\ell \le n} \sum_{\eta \neq 0} \frac{5^\ell}{(|\eta|_5 |\eta|_{13} |\eta|_\mathbb{C})^4} \ll \|f\|_{H^{10}}^2\|\mu\|_2^2,
%&\le 5^\alpha \|f\|_{H^{10}}^2\|\mu\|_2^2 \sum_{\eta \neq 0} \frac{1}{(|\eta|_5 |\eta|_{13} |\eta|_\mathbb{C})^2} \ll \|f\|_{H^{10}}^2\|\mu\|_2^2,
\end{align*}
It remains to show that the last sum over $\eta$ is of size $O(1)$.

Recall that each nonzero $\eta \in A$ can be expressed uniquely as $\eta=\overline{P_5}^a \overline{P_{13}}^b P_5^c z$, where $a,b \in \mathbb{Z}$, and $c \in \mathbb{N}$, and $z \in \mathbb{Z}[i]$ is coprime to all of $\overline{P_5},\overline{P_{13}},P_5$.  Then
$$\langle \eta \rangle=5^{-a}+13^{-b}+5^{a/2} \cdot 13^{b/2} \cdot 5^{c/2} \cdot |z|.$$
We always have $\langle \eta \rangle \geq 5^{c/4}$: Indeed, if $5^{-a},13^{-b}\leq 5^{c/4}$, then the third term is at least $5^{-c/8} \cdot 5^{-c/8} \cdot 5^{c/2} \cdot |z| \geq 5^{c/4}$ (since $|z| \geq 1$).  Thus our sum of interest can be bounded as
$$\sum_{\eta \neq 0} \frac{5^{\min(\alpha+v_{P_5}(\eta),n)}}{\langle \eta \rangle^{10}} \leq 5^\alpha \sum_{\eta \neq 0} \frac{1}{\langle \eta \rangle^{6}} \ll \sum_{\eta \neq 0} \frac{1}{\langle \eta \rangle^{6}}.$$
Now that we have dispensed with $P_5$, it is more convenient to parameterize $\eta$ as $\eta=\overline{P_5}^a \overline{P_{13}}^b w$, where $a,b \in \mathbb{Z}$ and $w \in \mathbb{Z}[i]$ is coprime to $\overline{P_5}, \overline{P_{13}}$.  We will bound the contribution from each $w$ individually.  Notice that $$\langle \eta \rangle=5^{-a}+13^{-b}+5^{a/2} \cdot 13^{b/2} \cdot |w|\geq \exp(B_w(a,b)),$$ where we have set
$$B_w(a,b):=\max(-a\log 5,-b\log 13,a(\log 5)/2+b(\log 13)/2+\log|w|).$$
Now $B_w$, viewed as a function on $\mathbb{R}^2$, achieves its minimum value $(\log|w|)/2$ at the point $(a_0,b_0)=(-\log_5(|w|)/2,-\log_{13}(|w|)/2)$, and it is easy to see that $B(a,b)-(\log|w|)/2$ grows as at least some absolute positive constant $c$ times the distance from $(a,b)$ to $(a_0,b_0)$.  It follows that
\begin{align*}
\sum_{a,b \in \mathbb{Z}^2} \frac{1}{\langle \overline{P_5}^a \overline{P_{13}}^b w \rangle^6} &\leq \sum_{a,b \in \mathbb{Z}^2} \frac{1}{\exp(6B_w(a,b))}\\
 &\leq \frac{1}{|w|^3} \sum_{a,b \in \mathbb{Z}^2}\frac{1}{\exp(6c \dist((a,b),(a_0,b_0)))}\\
 &\ll \frac{1}{|w|^3} \int_{R=0}^\infty \frac{R}{\exp(6cR)} \,dR \ll \frac{1}{|w|^3}.
\end{align*}
The sum of $|w|^{-3}$ over all nonzero $w \in \mathbb{Z}[i]$ is still $O(1)$. This completes the proof.
\end{proof}

The second main proof ingredient is the following lemma, which says that if $\mathcal{P}_{N,T}$ has large entropy with respect to $\mu$, then there is some small $s$ such that $(\theta_5^s)_* \mu$ majorizes a ``fairly spread-out'' measure $\nu$.  This step is one of the key ideas of~\cite{BLMV08}.

\begin{lemma}\label{lem:partitiontrick}
Let $\rho,\delta>0$ satisfy $\delta \leq \rho/10$.  Let $n \in \mathbb{N}$, and set $N:=5^n$.  Suppose $\mu$ is a measure on $\theta_5^{-n}A/A$ with entropy at least $\rho \log(N)$ (with respect to the partition into singletons).  Then for each integer $10/\delta \leq \ell \leq \delta n$, we have $(\theta_5^s)_*\mu \geq \nu$ for some integer $0 \leq s \leq (1-\delta)n$ and some measure $\nu=\sum_{i=1}^{5^{n-\ell}}w_i \nu_i$ satisfying the following:
\begin{enumerate}
    \item the $\nu_i$'s are probability measures, one supported on each translate of $\theta_5^{-\ell}A/A$;
    \item the $w_i$'s are nonnegative reals satisfying $\sum_i w_i \geq \rho-2\delta$;
    \item we have $\sum_i w_i \|\nu_i\|_2^2 \leq 2\cdot 5^{-\delta \ell}$.
\end{enumerate}
%Suppose that $\rho > 0$ is given and $\mu$ is a measure on $A\theta_5^{-n}/A$ so that $H_\mu \ge \rho n\log(5)$. Then for any $\frac{10}{\delta} \le \ell \le \delta n$, there exists $s \le (1 - \delta)n$ so that $(\theta_5^s)_* \mu \ge \nu := \sum w_i \nu_i$ where:
%\begin{itemize}
%\item Each $\nu_i$ is a probability measure supported on a translate of $A\theta_5^{-\ell}/A$;
%\item The $w_i$ are nonnegative and satisfy $\sum_i w_i > \rho - 2\delta$;
%\item $\nu_i$ and $w_i$ satisfy $\sum_i w_i \|\nu_i\|^2_2 \le 2 \cdot 5^{-\ell \delta}$.
%\end{itemize}
\end{lemma}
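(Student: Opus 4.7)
The plan is to combine an entropy-telescoping argument with a careful truncation of the pushforward measure $\mu_s := (\theta_5^s)_*\mu$.

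First, I will select the scale $s$. Let $\mathcal{Q}_k$ denote the partition of $\theta_5^{-n}A/A$ into cosets of $\theta_5^{-k}A/A$, so that $\mathcal{Q}_0$ is the singleton partition and $\mathcal{Q}_k$ refines $\mathcal{Q}_{k+1}$. Setting $K := \lfloor (1-\delta)n/\ell \rfloor$ and using $\ell \leq \delta n$, the telescoping identity
\begin{equation*}
\sum_{j=0}^{K-1}\bigl[H_\mu(\mathcal{Q}_{j\ell}) - H_\mu(\mathcal{Q}_{(j+1)\ell})\bigr] = H_\mu(\mathcal{Q}_0) - H_\mu(\mathcal{Q}_{K\ell}) \geq \rho n\log 5 - (n-K\ell)\log 5 \geq (\rho - 2\delta)n\log 5
\end{equation*}
is a sum of $K$ nonnegative terms. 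Pigeonholing produces an index $j$ with $H_\mu(\mathcal{Q}_{j\ell}) - H_\mu(\mathcal{Q}_{(j+1)\ell}) \geq (\rho - 2\delta)\ell\log 5$, and I set $s := j\ell \leq (1-\delta)n$.

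Second, I will translate this into a statement about $\mu_s$. Multiplication by $\theta_5^s$ is a $5^s$-to-$1$ homomorphism from $\theta_5^{-n}A/A$ onto $\theta_5^{-(n-s)}A/A$ whose fibers are exactly the parts of $\mathcal{Q}_s$, so $H_\mu(\mathcal{Q}_s) = H_{\mu_s}(\mathrm{sing})$. The same map carries $\mathcal{Q}_{s+\ell}$ to the partition $\mathcal{P}$ of $\theta_5^{-(n-s)}A/A$ into cosets of $\theta_5^{-\ell}A/A$, so $H_\mu(\mathcal{Q}_{s+\ell}) = H_{\mu_s}(\mathcal{P})$. Decomposing $\mu_s = \sum_C \mu_s(C)\nu_C$ over nonempty $\mathcal{P}$-cosets $C$ with $\nu_C := \mu_s|_C/\mu_s(C)$, I obtain
\begin{equation*}
\sum_C \mu_s(C) H(\nu_C) = H_{\mu_s}(\mathrm{sing} \mid \mathcal{P}) \geq (\rho - 2\delta)\ell\log 5.
\end{equation*}

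Third, I will construct $\nu$ by truncation. For each nonempty coset $C$, declare a point $x \in C$ to be \emph{light} if $\nu_C(x) \leq 5^{-\delta\ell}$, and set $\nu(x) := \mu_s(x)\mathbf{1}[x \text{ light}]$. Let $w_C := \nu(C)$ and $\nu_C^* := \nu|_C/w_C$ when $w_C > 0$; complete the index set $\{1, \ldots, 5^{n-\ell}\}$ by choosing arbitrary probability measures on the remaining translates. This makes clause (i) immediate. The pointwise bound $\nu(x) \leq 5^{-\delta\ell}\mu_s(C)$ on light $x$ yields $\sum_C w_C\|\nu_C^*\|_2^2 = \sum_C w_C^{-1}\sum_{x\in C}\nu(x)^2 \leq 5^{-\delta\ell}\sum_C\mu_s(C) = 5^{-\delta\ell}$, establishing clause (iii) with room to spare.

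The main obstacle will be clause (ii), $\sum_C w_C \geq \rho - 2\delta$, equivalently $\mu_s(\{\text{heavy}\}) \leq 1 - \rho + 2\delta$. A concavity-of-entropy estimate splits $H(\nu_C)$ into its heavy and light contributions to give $\nu_C(\{\text{heavy}\}) \leq (1 - H(\nu_C)/(\ell\log 5))/(1-\delta) + O((\delta\ell)^{-1})$, and averaging against $\mu_s(C)$ yields $\mu_s(\{\text{heavy}\}) \leq (1 - \rho + 2\delta)/(1-\delta) + O((\delta\ell)^{-1})$. This overshoots the target by an additive $O(\delta)$, which I expect to close by sharpening Stage~1 (the telescope actually delivers entropy excess at least $(\rho - 2\delta)\ell\log 5/(1-\delta)$, improving the effective value of $\rho$ by a factor of $(1-\delta)^{-1}$) and retuning the light-threshold. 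The hypothesis $\ell \geq 10/\delta$ provides enough slack to absorb the $O(1)$ losses in these estimates and to fix the numerical constants in the spirit of~\cite{BLMV08}.
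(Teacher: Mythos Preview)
The paper does not prove this lemma directly: it observes that Lemma~\ref{lem:equivalence} furnishes an isomorphism $\theta_5^{-n}A/A\cong\mathbb{Z}/5^n\mathbb{Z}$ intertwining $\times\theta_5$ with $\times 5$, after which the statement is literally \cite[Lemma~3.6]{BLMV08}. So your attempt is not a comparison with the paper's own argument but rather an attempt to reprove that cited lemma from scratch.

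Your Stages 1--3 are fine and do recover the skeleton of the BLMV argument. The problem is Stage~4, which you explicitly leave open, and your proposed patches do \emph{not} close it. After your sharpened telescoping you get $\bar h:=\E_C\,H(\nu_C)/(\ell\log 5)\ge(\rho-2\delta)/(1-\delta)$, and your heavy--light split yields $\mu_s(\mathrm{light})\ge(\bar h-\delta-\epsilon)/(1-\delta)$ with $\epsilon=O((\ell\log 5)^{-1})$. Requiring $\mu_s(\mathrm{light})\ge\rho-2\delta$ amounts, to first order in $\delta$, to the inequality $(\rho-2)\delta\ge-(1+\rho)\delta$, i.e.\ $\rho\ge 1/2$. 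For $\rho<1/2$ the shortfall is $\Theta\bigl((1-2\rho)\delta\bigr)$, a quantity that depends on $\delta$ but not on $\ell$; the hypothesis $\ell\ge 10/\delta$ only kills the $O(\ell^{-1})$ error terms and cannot absorb this. Retuning the light-threshold to $5^{-\tau\ell}$ does not help either: clause~(iii) forces $\tau\ge\delta-\log_5 2/\ell$, while closing clause~(ii) along your lines would need $\tau\lesssim(\rho-2\delta)\delta/(1-\rho+2\delta)$, which is incompatible with the former once $\rho<1/2+O(\delta)$.

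In short, the entropy--truncation bookkeeping here is genuinely delicate for small $\rho$, and a complete proof needs either a more careful pigeonholing over scales or a different truncation scheme than the one you describe. Since the paper simply imports \cite[Lemma~3.6]{BLMV08} wholesale, the cleanest fix is to do the same: invoke Lemma~\ref{lem:equivalence}(ii) for the isomorphism and cite the BLMV lemma, rather than re-deriving it.
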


\begin{proof}
Lemma~\ref{lem:equivalence} provides an isomorphism $\theta_5^{-n}A/A \cong \mathbb{Z}/5^n \mathbb{Z}$ that intertwines the $\times \theta_5$ and $\times 5$ maps.  After we apply this isomorphism, the desired result is precisely the content of \cite[Lemma 3.6]{BLMV08}
\end{proof}

With these two ingredients in hand, we can prove a ``discretized'' version of Theorem~\ref{thm:entropyweirdpartition} in which $\mu$ is supported on $\theta_5^{-n}A/A$; compare with~\cite[Proposition 3.8]{BLMV08}.

\begin{proposition}\label{cor:discreteblmv}
There is a constant $\kappa>0$ such that the following holds.  Set $\gamma:=\log 5/(4\log 13)$.  Let $\rho,\delta>0$ and $n,T \in \mathbb{N}$ (with $N:=5^n$) be such that $$10/\log_5 T \le \delta \le \rho/10 \quad \text{and} \quad 5^{20/\delta} \le T \le \gamma \delta n.$$
Suppose $\mu$ is a measure on $\theta_5^{-n}A/A$ with entropy at least $\rho \log(N)$ (with respect to the partition into singletons).  Then for every nonnegative smooth function $f$ on $X$, there are integers $$0 \leq s \leq (1-\delta)n \quad \text{and} \quad 0 \leq t \leq T$$
such that
$$(\theta_5^s \theta_{13}^t)_*\mu(f) \ge (\rho - 2\delta)m(f) - \kappa T^{-\delta/2} \|f\|_{H^{10}}.$$
%Let $n > 0 \in \mathbb{N}$ and $\mu$ a measure on $A\theta_5^{-n}/A$ with $H_\mu \ge \rho n\log(5)$ for some $\rho > 0$. Let $\alpha' = \frac{\log(5)}{4\log(13)}$ and suppose
%$$\frac{10}{n} \le \delta \le \frac{\rho}{10}, 5^{20/\delta} \le T \le \alpha' \delta n$$
%and $f$ nonnegative and smooth. Then there exist integers $0 \le s \le (1 - \delta)n$, $0 \le t \le T$ satisfying 
%$$(\theta_5^s \theta_{13}^t)_*\mu(f) \ge (\rho - 2\delta)m(f) - \kappa T^{-\delta/2} \|f\|_{H^p}$$
%for some Sobolev norm $\|\cdot\|_{H^p}$.
\end{proposition}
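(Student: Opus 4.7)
The plan is to combine the two main ingredients developed in this subsection in the style of BLMV. Lemma~\ref{lem:partitiontrick} provides a shift $(\theta_5^s)_*\mu$ that dominates a convex combination $\nu = \sum_i w_i \nu_i$ of probability measures supported on cosets of $\theta_5^{-\ell}A/A$ with small $L^2$ mass, and Theorem~\ref{thm:rigidity}, applied at level $\ell$, says that $\xi_*\nu_i$ averages to Haar measure on $X$ (up to error controlled by $\|\nu_i\|_2^2$) as $\xi$ ranges over $S_{13}(\ell)$. These two facts combine to give the desired bound after one extracts a good $\xi^* \in S_{13}(\ell)$ and realizes it as $\theta_{13}^t$ with $t \le T$.

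Concretely, set $\ell := \lfloor \log_5 T \rfloor$. The hypothesis $5^{20/\delta} \le T \le \gamma \delta n$ translates to $10/\delta \le \ell \le \delta n$, so Lemma~\ref{lem:partitiontrick} applies and yields an integer $0 \le s \le (1-\delta)n$ and a decomposition $\nu = \sum_i w_i \nu_i$ with $(\theta_5^s)_*\mu \ge \nu$, $\sum_i w_i \ge \rho - 2\delta$, and $\sum_i w_i \|\nu_i\|_2^2 \le 2\cdot 5^{-\delta\ell}$. Next, apply Theorem~\ref{thm:rigidity} at level $\ell$ to each $\nu_i$, multiply by $w_i$, and sum to obtain
$$\E_{\xi \in S_{13}(\ell)} \sum_i w_i |(\xi_*\nu_i)(f)-m(f)|^2 \le C\|f\|_{H^{10}}^2 \sum_i w_i \|\nu_i\|_2^2 \le 2C \cdot 5^{-\delta\ell}\|f\|_{H^{10}}^2.$$

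Now apply Cauchy-Schwarz twice: first to pull the $w_i$'s inside a square (using $\sum_i w_i \le 1$ since $\nu \le (\theta_5^s)_*\mu$), then to pass from $L^2$ to $L^1$ over $\xi$. This yields
$$\E_{\xi \in S_{13}(\ell)} \bigl|\xi_*\nu(f) - (\textstyle\sum_i w_i) m(f)\bigr| \le \sqrt{2C}\cdot 5^{-\delta \ell/2}\|f\|_{H^{10}} \le \kappa_0 T^{-\delta/2}\|f\|_{H^{10}}$$
for an absolute constant $\kappa_0$, where the last step uses $5^{-\delta \ell/2} \le 5^{\delta/2} T^{-\delta/2}$. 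In particular, some $\xi^* \in S_{13}(\ell)$ satisfies $\xi^*_*\nu(f) \ge (\rho - 2\delta)m(f) - \kappa_0 T^{-\delta/2}\|f\|_{H^{10}}$.

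Since $|S_{13}(\ell)| \le |A/\theta_5^\ell A| = 5^\ell \le T$ and $\theta_{13}$ generates $S_{13}(\ell)$ cyclically, we may write $\xi^* \equiv \theta_{13}^t$ in $A/\theta_5^\ell A$ for some $0 \le t \le T$. Because $f \ge 0$, the dominance $(\theta_5^s)_*\mu \ge \nu$ is preserved by further pushforward by $\theta_{13}^t$, and commutativity of multiplication in $A$ gives $(\theta_5^s\theta_{13}^t)_*\mu(f) \ge (\theta_{13}^t)_*\nu(f) = \xi^*_*\nu(f)$, completing the proof with $\kappa := \kappa_0$. The main subtlety, beyond the Cauchy-Schwarz bookkeeping, is the need to simultaneously satisfy $\ell \ge 10/\delta$ (so that Lemma~\ref{lem:partitiontrick} applies) and $5^\ell \le T$ (so that every element of $S_{13}(\ell)$ is realized as $\theta_{13}^t$ with $t \le T$); the two assumptions $5^{20/\delta} \le T$ and $T \le \gamma \delta n$ in the proposition are precisely what makes these two requirements compatible and fix the final $T^{-\delta/2}$ rate.
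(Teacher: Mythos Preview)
Your proof is correct and follows essentially the same route as the paper: set $\ell=\lfloor\log_5 T\rfloor$, invoke Lemma~\ref{lem:partitiontrick} to obtain $(\theta_5^s)_*\mu\ge\nu=\sum_i w_i\nu_i$, apply Theorem~\ref{thm:rigidity} at level $\ell$ to each $\nu_i$, combine via Cauchy--Schwarz, and extract a good $\xi^*=\theta_{13}^t$ using $|S_{13}(\ell)|\le 5^\ell\le T$. The only point you pass over more quickly than the paper is the verification that $T\le\gamma\delta n$ forces $\ell\le\delta n$; the paper spells out the short chain $T\le((\log 5)\delta n)^2/2\le 5^{\delta n}$, but your assertion is correct.
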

We remark that the proof requires the choice of $s,t$ to depend on $f$.

\begin{proof}
Set $\ell:=\lfloor \log_5(T) \rfloor$.  With a view towards applying Lemma~\ref{lem:partitiontrick}, we claim that $10/\delta\leq \ell \leq \delta n$.  The lower bound is clear.  For the upper bound, notice that $5^{20/\delta} \cdot (\log 13)^2 \geq 1/8$ (with room to spare) and hence
$$\delta n \cdot \frac{\delta n}{T} \geq \frac{5^{20/\delta}}{\gamma} \cdot \frac{1}{\gamma} \geq \frac{2}{(\log 5)^2}.$$
Since $x^2/2 \leq e^x$ for $x \geq 1$, we obtain
$$T \leq \frac{((\log 5) \delta n)^2}{2} \leq e^{(\log 5) \delta n}=5^{\delta n},$$
and on taking logarithms we obtain $\ell \leq \log_5 T \leq \delta n$, as desired.  We now apply Lemma~\ref{lem:partitiontrick} to obtain a measure $\nu=\sum_{i=1}^{5^{n-\ell}} w_i \nu_i$ and an integer $0 \leq s \leq (1-\delta)n$ as in the conclusion of that lemma.  Set $w:=\sum_i w_i \geq \rho-2\delta$.

Deploying our two main ingredients from above and letting $C>0$ be the constant from Theorem~\ref{thm:rigidity}, we find that
\begin{align*}
\E_{\xi \in S_{13}(n)} |(\xi_* \nu)(f) - wm(f)| &\leq \sum_i w_i \E_{\xi \in S_{13}(\ell)} |(\xi_* \nu_i)(f) - m(f)|\\
 &\leq w^{1/2} \left (\sum_i w_i \left (\E_{\xi \in S_{13}(\ell)} |(\xi_* \nu_i)(f) - m(f)| \right)^2 \right)^{1/2}\\
 & \leq \left (\sum_i w_i \E_{\xi \in S_{13}(\ell)} |(\xi_* \nu_i)(f) - m(f)|^2 \right)^{1/2}\\
 &\leq \left (\sum_i w_i C\|f\|_{H^{10}}^2 \|\nu_i\|_2^2 \right)^{1/2}\\
 & \leq C^{1/2} \|f\|_{H^{10}} \left(2 \cdot 5^{-\delta \ell} \right)^{1/2}.
\end{align*}
(In these inequalities we have used the Triangle Inequality; the Cauchy--Schwarz Inequality; the Cauchy-Schwarz Inequality together with $w \leq 1$; Theorem~\ref{thm:rigidity}; and Lemma~\ref{lem:partitiontrick}, respectively.)  It follows that there is some $\xi=13^t$, with $$0 \leq t \leq |S_{13}(\ell)| \leq 5^\ell \leq T,$$
such that
$$|(\theta_{13}^t)_* \nu(f) - wm(f)| \leq (2C)^{1/2} \|f\|_{H^{10}} \cdot 5^{-\delta \ell/2}.$$
Combining this inequality with $(\theta_5^s)_* \mu \geq \nu$, we conclude that
$$(\theta_5^s \theta_{13}^t)_* \mu(f) \geq (\rho-2\delta)m(f)-(2C)^{1/2} \|f\|_{H^{10}} \cdot 5^{-\delta \ell/2} \geq (\rho-2\delta)m(f)-\kappa \|f\|_{H^{10}} T^{-\delta/2},$$
for a suitable choice of $\kappa>0$.
\end{proof}

We can finally deduce Theorem~\ref{thm:entropyweirdpartition} from its discretized version.

\begin{proof}[Proof of Theorem~\ref{thm:entropyweirdpartition}]
Recall that the parts of the partition $\mathcal{P}_{N,T}$ are indexed by the elements of $\theta_5^{-n}A/A$ (the fundamental domain $F$ from Proposition~\ref{prop:constructionofpartition} contains $0$ but no other element of $\theta_5^{-n}A/A$).  Define the probability measure $\mu'$ on $\theta_5^{-n}A/A$ via $$\mu'(\{a\}):= \mu(P_a),$$ where $P_a$ is the part of $\mathcal{P}_{N,T}$ containing $a$.  Thus the entropy of $\mu'$ (with respect to the partition into singletons) is precisely $H_\mu(\mathcal{P}_{N,T})$.  Applying Proposition~\ref{cor:discreteblmv} to $\mu'$, we obtain integers $0 \le s \le (1 - \delta)\lfloor \log_5(N) \rfloor$ and $0 \le t \le T$ such that
$$(\theta_5^s\theta_{13}^t)_*\mu'(f) \ge (\rho - 2\delta)m(f) - \kappa T^{-\delta/2}\|f\|_{H^{10}},$$
where $\kappa>0$ is the constant from Proposition~\ref{cor:discreteblmv}.

We now make use of the ``skewed'' shape of the parts of the partition $\mathcal{P}_{N,T}$.  Recall that each part of the partition $\mathcal{P}_{N,T}$ has diameters at most $N_5, N_{13}, N_{\C}$ in the $\Q_5,\Q_{13},\C$-directions (respectively).
The choice of $N_5, N_{13}, N_{\C}$ and the upper bounds on $s, t$ guarantee that
$$\dist(\theta_5^s\theta_{13}^tx, \theta_5^s\theta_{13}^t y) \leq 5^s N_5+13^t N_{13}+N_{\C} \ll N^{-\delta/4} \cdot 13^{T/4}$$
whenever $x,y$ are points in the same part of the partition $\mathcal{P}_{N,T}$.
Thus
\begin{align*}
|(\theta_5^s\theta_{13}^t)_*\mu(f) - (\theta_5^s\theta_{13}^t)_*\mu'(f)| &\le \sup_{\text{$x,y$ in same part of $\mathcal{P}_{N,T}$}} |f(\theta_5^s\theta_{13}^t x) - f(\theta_5^s\theta_{13}^t y)|\\
 &\ll N^{-\delta/4} \cdot 13^{T/4} \Lip(f),
\end{align*}
and the theorem follows.
%
%Note that our choice of parameters ensures that $-\eta, \alpha\delta - \epsilon + \eta, \epsilon - \delta$ are all $\le -\beta\delta$ for some $\beta > 0$. Hence,
%$$|(\theta_5^s\theta_{13}^t)_*\mu(f) - (\theta_5^s\theta_{13}^t)_*\mu'(f)| \le \max_{|x - x'| \le 3N^{-\beta\delta}} |f(x) - f(x')| \le 3N^{-\beta\delta}\|f\|_{\mathrm{Lip}(X)}.$$
%The theorem follows from this.
\end{proof}

\subsection{The main argument}
We now combine Theorem~\ref{thm:mannersapproximation} and Corollary~\ref{cor:quantitativedenseness} from the previous two subsections.  We start with the hypothesis that $y=(\theta_5^s\theta_{13}^t-\theta_5^{s'}\theta_{13}^{t'})p$ is fairly small but not extremely close to $B^{\C}_{1/10}(0)$.  Since $y$ must be reasonably large in either the $\Q_5$-component or the $\Q_{13}$-component, Theorem~\ref{thm:mannersapproximation} tells us that that the orbit of $y$ is quite dense in $X$.  We then deduce that the orbit of $p$ intersects many ($N^{0.49}$, say) parts of the partition $\mathcal{P}_{N,T}$ from Corollary~\ref{cor:quantitativedenseness}.   Finally, this corollary tells us that a slightly longer orbit of $p$ is $\varepsilon$-dense in $X$, and in particular that the orbit intersects $E_1^*(\varepsilon)$.  The details are as follows.

\begin{proposition}[Minor arcs]\label{prop:minor}
There is a constant $C>0$ such that the following holds.  Let $0<\varepsilon, \varpi<1/10$, and let $N_0 \in \N$.  Set $\eta:=5^{-\varepsilon^{-C}}$.  Suppose $p \in X$ and $0 \leq s_1,s_2,t_1,t_2 \leq N_0$ are such that $(\theta_5^{s_1}\theta_{13}^{t_1}-\theta_5^{s_2}\theta_{13}^{t_2})p=j(a,b,z)$ with
$$2\varpi \leq |a|_5+|b|_{13} \leq 13^{-C \eta^{-5}} \quad \text{and} \quad |z| \leq \eta/10.$$
Then the orbit
$$\Theta_{N_0+\log_5(\varpi^{-1})+C\eta^{-5}} \cdot p$$
is $\varepsilon$-dense in $X$ and, in particular, intersects $E_1^*(\varepsilon)$.
\end{proposition}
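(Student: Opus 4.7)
The plan is to mirror the minor-arc step of the proof of Theorem~\ref{thm:quantitative-furstenberg}. I would first apply Theorem~\ref{thm:mannersapproximation} (Manners's denseness criterion) to exhibit an $\eta$-dense subset of $(\Theta_{N_1}\cdot p)-(\Theta_{N_1}\cdot p)$ in $X$ for some appropriate $N_1$, and then feed this into the entropy-based Corollary~\ref{cor:quantitativedenseness} to bootstrap to $\varepsilon$-denseness of $\Theta_{N_1+n}\cdot p$ itself.

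For the first step, write $y:=(\theta_5^{s_1}\theta_{13}^{t_1}-\theta_5^{s_2}\theta_{13}^{t_2})p=j(a,b,z)$. The lower bound $|a|_5+|b|_{13}\ge 2\varpi$ forces at least one of $|a|_5,|b|_{13}$ to be $\ge\varpi$; by the symmetric statement in Theorem~\ref{thm:mannersapproximation} I may assume $|a|_5\ge\varpi$. The upper bound $|a|_5+|b|_{13}\le 13^{-C\eta^{-5}}$ is astronomically smaller than $\eta/10$ (since $\eta=5^{-\varepsilon^{-C}}$), so the hypotheses of Theorem~\ref{thm:mannersapproximation} apply to $y$. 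Unpacking $\theta_5^s\theta_{13}^t y=(\theta_5^{s+s_1}\theta_{13}^{t+t_1}-\theta_5^{s+s_2}\theta_{13}^{t+t_2})p$ and setting $N_1:=N_0+\log_5(\varpi^{-1})+O(\eta^{-5})$, the conclusion of that theorem is precisely the statement that $(\Theta_{N_1}\cdot p)-(\Theta_{N_1}\cdot p)$ is $\eta$-dense in $X$.

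For the second step, I would invoke Corollary~\ref{cor:quantitativedenseness} with $\rho:=0.49$ and $n:=\lceil\varepsilon^{-C_0}\rceil$, where $C_0$ is a constant chosen just large enough that $n^{-c\rho}\le\varepsilon$ (with $c$ the constant from the corollary). Plugging the $(N_5,N_{13},N_{\mathbb{C}})$ of Theorem~\ref{thm:entropyweirdpartition} into Proposition~\ref{prop:constructionofpartition} shows each part of the resulting skewed partition $\mathcal{P}_{N,T}$ has in-radius $\asymp 5^{-n}$, which is much larger than $\eta=5^{-\varepsilon^{-C}}$ provided $C$ is taken large relative to $C_0$. Every part therefore contains an $\eta$-ball, and so the $\eta$-dense difference set meets every part. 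Writing $Q:=\{P\in\mathcal{P}_{N,T}:P\cap(\Theta_{N_1}\cdot p)\ne\emptyset\}$ and invoking the last clause of Proposition~\ref{prop:constructionofpartition} (the difference of any two parts lies in at most $4$ parts of $\mathcal{P}_{N,T}$), I obtain $4|Q|^2\ge|\mathcal{P}_{N,T}|=N$, so $|Q|\ge N^{1/2}/2\ge N^\rho$ once $N$ is large. Corollary~\ref{cor:quantitativedenseness} applied to $S:=\Theta_{N_1}\cdot p$ then certifies that $\Theta_{N_1+n}\cdot p$ is $\varepsilon$-dense in $X$, and in particular intersects the nonempty open set $E_1^*(\varepsilon)$. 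The total orbit length $N_1+n$ is absorbed into $N_0+\log_5(\varpi^{-1})+C\eta^{-5}$ after enlarging $C$, since $\varepsilon^{-C_0}\ll\eta^{-5}=5^{5\varepsilon^{-C}}$.

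I expect the main obstacle to be this parameter balancing for $n$: it must exceed $\varepsilon^{-1/(c\rho)}$ so that Corollary~\ref{cor:quantitativedenseness} delivers $\varepsilon$-density, yet stay below $\sim\varepsilon^{-C}$ so that $5^{-n}\ge\eta$ and the skewed partition is coarse enough that each part contains an $\eta$-ball. This window is nonempty precisely because the constant $C$ in $\eta=5^{-\varepsilon^{-C}}$ can be taken large relative to $1/(c\rho)$; everything else in the argument is bookkeeping.
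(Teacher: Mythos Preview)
Your proposal is correct and follows essentially the same route as the paper's proof: apply Theorem~\ref{thm:mannersapproximation} to $y$ to get $\eta$-density of a difference of orbits, use the in-radius bound on the skewed partition $\mathcal{P}_{N,T}$ together with the $4$-part overlap clause of Proposition~\ref{prop:constructionofpartition} to extract $\gg N^{1/2}$ occupied parts, and finish with Corollary~\ref{cor:quantitativedenseness}. The only cosmetic difference is parameterization: the paper sets $N:=\eta^{-0.9}$ directly (so $n\approx 0.9\,\varepsilon^{-C}$) and computes the in-radius as $>N^{-1.1}$, whereas you introduce an auxiliary exponent $C_0$ and require $C$ large relative to it; both choices sit comfortably in the same window you identified.
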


\begin{proof}
Consider the case $|a|_5 \geq |b|_{13}$; the case $|a|_5 \leq |b|_{13}$ is identical, and we will omit it.  Thus $\varpi \leq |a|_5 \leq 13^{-C\eta^{-5}} \leq \eta/10$ (the last inequality with much extra room), and Theorem~\ref{thm:mannersapproximation} applied to the point $(\theta_5^{s_1}\theta_{13}^{t_1}-\theta_5^{s_2}\theta_{13}^{t_2})p$ tells us that the orbit
$$\{\theta_5^s \theta_{13}^t (\theta_5^{s_1}\theta_{13}^{t_1}-\theta_5^{s_2}\theta_{13}^{t_2})p: 0 \leq s \leq \log_5(\varpi^{-1} \eta^{-4})+C, ~ 0 \leq t \leq C\eta^{-5}\}$$
is $\eta$-dense in $X$ as long as $C$ is sufficiently large.  Notice that this orbit is contained in the set $Z-Z$, where
$$Z:=\Theta_{N_0+\log_5(\varpi^{-1})+C\eta^{-5}} \cdot p=\{\theta_5^s \theta_{13}^t p: 0 \leq s, t \leq N_0+\log_5(\varpi^{-1})+C\eta^{-5}\}$$
is an orbit of our initial point $p$ (here taking a slightly larger value of $C$).

Let $N:=5^n$ be a parameter to be determined shortly.  With the choice $\rho:=0.49$, set $\delta:=\rho/10$ and $T:=\frac{\log 5}{40 \log 13} \cdot \rho n$ as in Corollary~\ref{cor:quantitativedenseness}.  The construction of Proposition~\ref{prop:constructionofpartition} ensures that in the partition $\mathcal{P}_{N,T}$ from Theorem~\ref{thm:entropyweirdpartition}, each part has in-radius
\begin{align*}
\asymp \min(N_5,N_{13},N_{\C}) &=\min\left(\frac{13^{T/4}}{N^{1+3\delta/4}}, \frac{1}{N^{\delta/4} \cdot 13^{3T/4}}, \sqrt{130} \cdot N^{\delta/2} \cdot 13^{T/4}  \right)\\
 &\asymp \min \left(\frac{1}{N^{1+11\delta/16}}, \frac{1}{N^{7\delta/17}}, N^{9\delta/16} \right)\\
 &=\frac{1}{N^{1+11\delta/16}}>\frac{1}{N^{1.1}}.
 \end{align*}
Thus, if we choose $N:=\eta^{-0.9}$ (say), then we find that $Z-Z$
intersects all of the parts of $\mathcal{P}_{N,T}$.  Since the difference set of any two parts of $\mathcal{P}_{N,T}$ intersects only $O(1)$ parts of $\mathcal{P}_{N,T}$ (by Proposition~\ref{prop:constructionofpartition}), it must be the case that $Z$ intersects $\gg |\mathcal{P}_{N,T}|^{1/2}=N^{1/2}$ parts of $\mathcal{P}_{N,T}$; in particular, $Z$ intersects at least $N^{0.49}$ parts of $\mathcal{P}_{N,T}$ since $N$ is sufficiently large.

We can now apply Corollary~\ref{cor:quantitativedenseness}, which tells us that
$$\{\theta_5^s \theta_{13}^t Z: 0 \leq s,t \leq n\}=\Theta_{N_0+\log_5(\varpi^{-1})+C\eta^{-5}+n} \cdot p$$
is $n^{-c\rho}$-dense in $X$ for some absolute constant $c>0$.  Unraveling the definitions of $n,\eta$, we have
$$n^{-c\rho}=(\log_5(N))^{-c\rho}=(0.9\log_5(\eta^{-1}))^{-c\rho}=(0.9\varepsilon^{-C})^{-c\rho}<\varepsilon,$$
as long as $C$ is sufficiently large relative to $c$.  To conclude, notice that in the upper bound on $s,t$, we can absorb the $n=\eta^{-0.9}$ term into the $C\eta^{-5}$ term by increasing the value of $C$.
\end{proof}

% Recall that the minor arc case is when $\Theta_{N_0}(p) - \Theta_{N_0}(p)$ contains a point $q$ that is a distance of at most $\exp(-1/\varepsilon^{C})$ from $0$ and a distance of at least $\exp(-1/\varepsilon^{C})/26$ from $B^\mathbb{C}_{1/10}$. By Theorem~\ref{thm:mannersapproximation}, $\Theta_{N_0^2}(p) - \Theta_{N_0^2}(p)$ is $\exp(-1/\varepsilon^{C'})$-dense in $X$ and thus intersects each element of the partition $\mathcal{P}_M$ (constructed in Proposition~\ref{prop:constructionofpartition}) with $M = \exp(-1/\varepsilon^C)$, for some appropriately chosen $C'$. By Proposition~\ref{prop:constructionofpartition}, for each element $\mathcal{P}$ of the partition, $\mathcal{P} - \mathcal{P}$ can be covered by $O(1)$ many elements of $\mathcal{P}_M$. Hence, $\Theta_{N_0^2}(p)$ intersects at least $M^{1/2 - o(1)}$ many elements of $\mathcal{P}_M$. Let $\mu$ be a measure supported on $\Theta_{N_0^2}(p)$ that is a sum of $\delta$ masses of a single point in each element of $\mathcal{P}_M$, and further normalized to have measure one. Finally, by Corollary~\ref{cor:quantitativedenseness}, we see that $\Theta_{2N_0^2}(p)$ is $\varepsilon^C$-dense in $X$ for $N_2 = 2N_1$. This completes the proof of the minor arc case. As $2N_0^{O(1)} \le \exp(-1/\varepsilon^{O(1)})$, we are done. 

\section{Deduction of the quantitative rationality lemma}\label{s:maindeduction}
We can finally assemble all of the pieces.  %The previous two sections treated the super-major arcs and the minor arcs; it remains only to reduce the intermediate major-arc range to the case of minor arcs.

\begin{proof}[Proof of Theorem~\ref{thm:quantitative-rationality}]
Let $p \in X$ be any point.  Our goal is to show that either $p$ is very close to a complex ball of radius $1/2$ around a torsion point of small order, or a reasonably short orbit of $p$ under $\theta_5, \theta_{13}$ intersects $E^*(\varepsilon)$.

Let $N_0:=\exp\exp(\varepsilon^{-C_1})$ for some $C_1>0$ to be chosen later. 
 First, consider the orbit
$$\Theta_{N_0} \cdot p=\{\theta_5^s \theta_{13}^t p: 0 \leq s,t \leq N_0\}.$$
The space $X$ can be covered by $O(\gamma^{-4})$ balls of radius $\gamma$ for any $0<\gamma<1$ (recall that $\Z_5 \times \Z_{13} \times [-1/2,1/2) \times [-1/2,1/2)i$ is a fundamental domain for $X$), so the pigeonhole principle provides some $0 \leq s_1,s_2,t_1,t_2 \leq N_0$, with $(s_1,t_2) \neq (s_2,t_2)$, such that $$\dist((\theta_5^{s_1}\theta_{13}^{t_1}-\theta_5^{s_2}\theta_{13}^{t_2})p,0)=\dist(\theta_5^{s_1} \theta_{13}^{t_1}p, \theta_5^{s_2} \theta_{13}^{t_2}p) \ll N_0^{-1/2}.$$
Set $r:=\theta_5^{s_1}\theta_{13}^{t_1}-\theta_5^{s_2}\theta_{13}^{t_2}$, and notice that $r$ is a nonzero Gaussian rational of height at most $\exp(100N_0)$. We can write $rp=j(a,b,z)$, where $|a|_5+|b|_{13}+|z| \ll N_0^{-1/2}$.  We now have a dichomety according to the size of $|a|_5+|b|_{13}$.   %Recall the parameters $n,N$ from the statement of Theorem~\ref{thm:quantitative-rationality}.

{\bf Major arcs.}  First, suppose that $$|a|_5+|b|_{13} \leq \min(\delta \exp(-200 N_0), \exp(-\exp(200C_2N_0))),$$ where $C_2>0$ is the constant $C$ from Proposition~\ref{prop:super-major}.  Then this proposition (applied with $k:=\exp(-100N_0)$) implies that either $\dist(p, \overline{B}_{1/2}^{\C}(q)) \leq \delta$ for some torsion point $q \in X$ of order at most $\exp(200N_0)$, or $\Theta_{\exp(200C_2 N_0)} \cdot p$ intersects $E^*_1(\varepsilon)$.  Both outcomes are acceptable once we note that
$$\exp(200N_0),\exp(200C_2N_0) \leq \exp\exp \exp(\varepsilon^{-C})$$
for $C$ sufficiently large in terms of $C_1,C_2$.

{\bf Minor arcs.}  Second, suppose that
$$|a|_5+|b|_{13} \geq \min(\delta \exp(-200 N_0), \exp(-\exp(200C_2N_0))),$$
with the same value of $C_2$ as above.  Let $C_3$ denote the constant $C$ from Proposition~\ref{prop:minor}, and set $\eta:=5^{-\varepsilon^{-C_3}}$.  Notice that
$$|a|_5+|b|_{13}+|z| \ll N_0^{-1/2}$$
is smaller than $13^{-C_3 \eta^{-5}}$ as long as $C_1$ is sufficiently large relative to $C_3$.  Thus, applying Proposition~\ref{prop:minor} with $2\varpi:=\min(\delta \exp(-200 N_0), \exp(-\exp(200C_2N_0)))$, we conclude that
$$\Theta_{N_0+\log_5(\varpi^{-1})+C_3 \eta^{-5}} \cdot p$$
intersects $E^*_1(\varepsilon)$.  This outcome is acceptable once we note that
\begin{align*}
N_0+\log_5(\varpi^{-1})+C_3 \eta^{-5} &\ll N_0+\max(\log(\delta^{-1})+N_0,\exp(200 C_2 N_0))+5^{5\varepsilon^{-C_3}}
\end{align*}
is smaller than $$\max(\exp\exp\exp(\varepsilon^{-C}), C\log(\delta^{-1}))$$
for $C$ sufficiently large in terms of $C_1,C_2,C_3$.  This concludes the proof.
\end{proof}

%It now only remains to deduce the major arc case. The approach would be to reduce it to the minor-arc case. Without loss of generality, we may assume that $\delta \le \exp(-\exp(1/\varepsilon^{\Csm}))$. Recall that the major arc case is when every point $q \in \Theta_{N_0}(p) - \Theta_{N_0}(p)$ that is a distance of at most $\exp(-1/\varepsilon^C)$-fromis between $\delta$ and $\exp(-1/\varepsilon^C)$ from $B_{1/10}^\mathbb{C}(0)$ away from 0. Since $\theta_5$ and $\theta_{13}$ act as rotations in a small neighborhood of $B_{1/10}^\mathbb{C}(0)$, and expand on the components $\mathbb{Z}_5$ and $\mathbb{Z}_{13}$, respectively, there exist $m \ll \log(\delta/N_0)$ and $\theta \in \{\theta_5, \theta_{13}\}$ such that $q_1 = \theta^mq$ is a distance at least $\exp(-1/\varepsilon^C)/26$ from $B_{1/10}^\mathbb{C}$ and a distance at most $\exp(-1/\varepsilon^C)$ from zero. We are thus in the minor-arc case. Applying the above argument, we find that $\Theta_{2N_0^2}(q_1)$ is $\varepsilon^{O(1)}$-dense in $X$. Letting $N = 2N_0^2 + O(\log(\delta/N_0))$, we find that $\Theta_N(p)$ is $\varepsilon$-dense in $X$. As $N \ll \max\{\exp\exp(1/\varepsilon^{C}), \varepsilon^C\log(1/\delta)\}$, we are done. 

\section{From the quantitative rationality lemma to the main theorem}\label{s:irrationalargument}

In this section we deduce Theorem~\ref{thm:main} (the main result about the Pyjama Problem) from Theorem~\ref{thm:quantitative-rationality} (the quantitative rationality lemma).  This part of the argument closely follows Manners's deduction of his main theorem from his qualitative rationality lemma (see Sections 3 and 5.3 of \cite{Man13}).  The only real point of difference lies in Lemma~\ref{lem:irrationality-trick} below.  Like Manners, we separate the deduction into two steps.

\subsection{Lattice point statement}
In the first step, we convert Theorem~\ref{thm:quantitative-rationality} into a statement about the complement of the rotated strips $E_\theta(\varepsilon)$; compare~\cite[Lemma 3.1]{Man13}.  We isolate the following elementary fact, proven as the ``Claim'' in~\cite[page 251]{Man13}: Let $0<\eta<1/100$ and $B \in \mathbb{N}$; if $z \in \mathbb{C}$ is such that $\theta_5^s\theta_{13}^tz$ lies within distance $\eta$ of $\mathbb{Z}[i]$ for all $0 \leq s,t \leq B$, then in fact $z$ lies within $\eta$ of $\overline{P}_5^B \overline{P}_{13}^B \mathbb{Z}[i]$.  

For $m$ a natural number and $D$ a (nonzero) Gaussian integer, define the subset
$$F(m,D):=\left(\frac{D}{m}\mathbb{Z}[i] \setminus D\mathbb{Z}[i]\right)+\overline{B}_{2/3}(0) \subseteq \mathbb{C};$$
as usual, the choice of the constant $2/3$ is not important (anything strictly larger than $1/2$ works).

\begin{lemma}\label{lem:lattice-points}
There is an absolute constant $C>0$ such that for all $0<\varepsilon<1/10$ and $R>\exp\exp(\varepsilon^{-C})$, the following holds with $$n=n_{\varepsilon}:= \exp\exp\exp(\varepsilon^{-C})$$ and $$N=N_{\varepsilon,R}:=\max(\exp\exp\exp(\varepsilon^{-C}),\exp\exp(\varepsilon^{-C})+3\log R).$$ 
There is a Gaussian integer $D$ of modulus at least $R$ such that
$$\bigcup_{1 \leq m \leq n}F(m,D) \cup \bigcup_{\theta \in \Theta_N}E_\theta(\varepsilon)=\mathbb{C}.$$
\end{lemma}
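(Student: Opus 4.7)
The plan is to apply Theorem~\ref{thm:quantitative-rationality} to the point $p := j_\mathbb{C}(z) \in X$ for each $z \in \mathbb{C}$. Observe that $(\theta \cdot j_\mathbb{C}(z))(1) = \Re(z\theta) \pmod 1$, so the condition $\theta \cdot j_\mathbb{C}(z) \in E_1^*(\varepsilon)$ is equivalent to $z \in E_\theta(\varepsilon)$; hence if the orbit $\Theta_N \cdot p$ intersects $E_1^*(\varepsilon)$ then $z$ is already covered by a pyjama stripe. Otherwise $\dist(p, \overline{B}_{1/2}^{\mathbb{C}}(q)) \leq \delta$ for some torsion point $q \in X$ of order $m \leq n$, and we must show $z$ belongs to $F(m, D)$ for an appropriate Gaussian integer $D$ or else to some pyjama stripe. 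Set $K := \lceil \log_5 R \rceil$, $L := \lceil \log_{13} R \rceil$, $D := \overline{P_5}^K \overline{P_{13}}^L$, and $\delta := \min(5^{-K}, 13^{-L})$; then $|D| = \sqrt{5^K \cdot 13^L} \geq R$ and $\log\delta^{-1} = \log R + O(1)$, so the $N$ provided by Theorem~\ref{thm:quantitative-rationality} fits within the stated bound.

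Next, represent $q = j \circ \iota^\Delta(w_0)$ for some $w_0 \in \mathbb{Q}(i)$ with $m w_0 \in A$. Unpacking the distance bound via Lemma~\ref{lem:freddie-isom}, one obtains $y \in \overline{B}_{1/2}(0)$, $\gamma \in A$, and error terms of norm $\leq \delta$ such that, setting $\tilde\gamma := w_0 + \gamma \in \mathbb{Q}(i)$, we have $|\iota_5(\tilde\gamma)|_5, |\iota_{13}(\tilde\gamma)|_{13} \leq \delta$ and $z = -\tilde\gamma + y + O(\delta)$. Since $m\tilde\gamma \in A$ satisfies $|\iota_5(m\tilde\gamma)|_5 \leq \delta \leq 5^{-K}$ and $|\iota_{13}(m\tilde\gamma)|_{13} \leq \delta \leq 13^{-L}$, it in fact lies in $\mathbb{Z}[i]$ and is divisible by $\overline{P_5}^K \overline{P_{13}}^L = D$; hence $-\tilde\gamma \in \frac{D}{m}\mathbb{Z}[i]$ and $z$ lies within $1/2 + \delta \leq 2/3$ of $-\tilde\gamma$. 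If $-\tilde\gamma \notin D\mathbb{Z}[i]$, then by definition $z \in F(m, D)$ and we are done.

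The main obstacle is the remaining case $-\tilde\gamma \in D\mathbb{Z}[i]$. Since $\tilde\gamma = w_0 + \gamma$ with $\gamma \in A$ and $D\mathbb{Z}[i] \subseteq A$, this case forces $w_0 \in A$, whence $q = 0$ and $m = 1$; the set $F(1, D)$ is empty, so we must cover $z$ via a pyjama stripe directly. Writing $-\tilde\gamma = Dk$ with $k \in \mathbb{Z}[i]$, we try $\theta = \theta_5^s$ for some $0 \leq s \leq K$; then $\theta D = P_5^s \overline{P_5}^{K-s} \overline{P_{13}}^L \in \mathbb{Z}[i]$, so $\Re(\theta Dk) \in \mathbb{Z}$ and $\Re(\theta z) \pmod 1 = \Re(\theta y) + O(\delta)$. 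It suffices to choose $s$ so that $\theta_5^s y$ is nearly purely imaginary, ensuring $|\Re(\theta_5^s y)| < \varepsilon/2$: this is provided by Lemma~\ref{lem:quantitativeapproximation}, since $\{s\alpha_5 \pmod 1 : 0 \leq s \leq K\}$ is $K^{-c}$-dense in $\mathbb{R}/\mathbb{Z}$ for some $c > 0$, and the threshold $\asymp \varepsilon^{-1/c}$ is comfortably achieved because $K \geq \log R \geq \exp(\varepsilon^{-C})$. The inclusion $\theta_5^s \in \Theta_N$ follows from $K \leq N$, which is guaranteed by the $3\log R$ term in the definition of $N$.
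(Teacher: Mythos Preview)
Your proof is correct and follows the same overall architecture as the paper's: apply Theorem~\ref{thm:quantitative-rationality} to $p=j_{\mathbb C}(z)$, unpack the near-torsion conclusion to locate $z$ near a point of $\frac{D}{m}\mathbb{Z}[i]$, and dispose of the residual $D\mathbb{Z}[i]$ case via Lemma~\ref{lem:quantitativeapproximation}. The interesting difference is in the middle step. The paper multiplies by $m$, evaluates the resulting character at $\theta_5^s\theta_{13}^t$ for all $0\le s,t\le \lfloor\log R\rfloor$, and then invokes Manners's elementary ``Claim'' to force $m(w-y')$ into $D\mathbb{Z}[i]$ up to a small error; this is why the paper needs the very small $\delta=1/(1000nR^3)$, to survive the blow-up factors $5^s,13^t$. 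You instead observe directly that $m\tilde\gamma\in A$ with $|\iota_5(m\tilde\gamma)|_5\le 5^{-K}$ and $|\iota_{13}(m\tilde\gamma)|_{13}\le 13^{-L}$, which by the valuation description of $A$ forces $m\tilde\gamma\in \overline{P_5}^K\overline{P_{13}}^L\mathbb{Z}[i]=D\mathbb{Z}[i]$ exactly. This is cleaner: it avoids the external reference to Manners's Claim and lets you take the larger $\delta\asymp 1/R$, yielding a slightly better dependence of $N$ on $R$. Your additional remark that the $D\mathbb{Z}[i]$ case forces $q=0$, $m=1$ is a nice structural observation (the paper just derives a contradiction for any $m$), though the endgame via Lemma~\ref{lem:quantitativeapproximation} is identical in both proofs.
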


\begin{proof}
Theorem~\ref{thm:quantitative-rationality}, applied with $\delta:=1/(1000nR^3)$,\footnote{Recall that $n$ depends only on $\varepsilon$, so we are free to choose $\delta$ after seeing the value of $n$.} provides the desired $n,N$ with the property that
$$\bigcup_{x \in X: ~\ord(x) \leq n} \left(\overline{B}^{\mathbb{C}}_{1/2}(x)+B_{\delta}(0)\right) \cup \bigcup_{\theta \in \Theta_N} E_\theta^*(\varepsilon)=X.$$
Applying $j_{\mathbb{C}}^{-1}$ to both sides, we get
$$\bigcup_{x \in X: ~\ord(x) \leq n} j_{\mathbb{C}}^{-1} \left(\overline{B}^{\mathbb{C}}_{1/2}(x)+B_{\delta}(0)\right) \cup \bigcup_{\theta \in \Theta_N} E_\theta(\varepsilon)=\mathbb{C};$$
it remains to show that
$$\bigcup_{x \in X: ~\ord(x) \leq n} j_{\mathbb{C}}^{-1} \left(\overline{B}^{\mathbb{C}}_{1/2}(x)+B_{\delta}(0)\right) \setminus \bigcup_{\theta \in \Theta_N} E_\theta(\varepsilon) \subseteq \bigcup_{1 \leq m \leq n}F(m,D).$$

Let $w$ be an element of the left-hand side.  Then we can write $$w=j_{\mathbb{C}}^{-1}(x+y+z),$$ where $x \in X$ is a torsion point of some order $m \leq n$; $y=j_{\mathbb{C}}(y')$ for some complex number $y'$ of size at most $1/2$; and $z \in X$ has size at most $\delta$.  Applying $j_{\mathbb{C}}$, we get
$$j_{\mathbb{C}}(w)=x+j_{\mathbb{C}}(y')+z.$$
Subtracting $j_{\mathbb{C}}(y')$ from both sides and then multiplying by $m$ gives
$$j_{\mathbb{C}}(m(w-y'))=mx+mz=mz.$$
Evaluating both sides at $\theta_5^s \theta_{13}^t$ (for $s,t$ nonnegative integers) gives that the distance from the real part of $m(w-y')\theta_5^s \theta_{13}^t$ to the nearest integer is at most
$$m|z(\theta_5^s \theta_{13}^t)|\leq n\delta(5^s+13^t+1).$$
Likewise, evaluating at $i\theta_5^s \theta_{13}^t$ shows that the imaginary part of $m(w-y')\theta_5^s \theta_{13}^t$ is within $n\delta(5^s+13^t+1)$ of an integer, so we conclude that $m(w-y')\theta_5^s \theta_{13}^t$ lies within distance $2n\delta(5^s+13^t+1)$ of a Gaussian integer.

Set $D:=\overline{P_5}^{\lfloor \log R\rfloor} \overline{P_{13}}^{\lfloor \log R\rfloor}$; notice that $|D|>R$ (with room to spare).  For each choice of $s,t \leq \lfloor \log R\rfloor$, the point $m(w-y')\theta_5^s \theta_{13}^t$ lies within $$2n\delta(5^s+13^t+1) \leq 10n\delta R^3\leq 1/100$$ of a Gaussian integer.  The fact mentioned before the proof implies that $m(w-y')$ lies within $10n\delta R^3$ of $D\mathbb{Z}[i]$.  Dividing by $m$ and subtracting $y'$ (which, recall, has modulus at most $1/2$), we find that $w$ lies within $10n\delta R^3+1/2 \leq 2/3$ of $\frac{D}{m}\mathbb{Z}[i]$.

To conclude that $w \in F(m,D)$, we must show that $w$ does not lie within distance $2/3$ of any element of $D\mathbb{Z}[i]$.  Suppose for the sake of contradiction that $w$ does lie within distance $2/3$ of an element of $D\mathbb{Z}[i]$, and write $w=x+y$ with $x \in D\mathbb{Z}[i]$ and $|y| \leq 2/3$.  We will show that there is some $0 \leq s \leq \varepsilon^{-C}$ (where $\varepsilon^{-C}$ is much smaller than $N$) such that $$\Re(\theta_5^s w) \in (-\varepsilon,\varepsilon) \pmod{1},$$ i.e., $w \in E_{\theta_5^s}(\varepsilon)$, contrary to our initial assumption that $$w \notin \bigcup_{\theta \in \Theta_N}E_\theta(\varepsilon).$$
Since $\theta_5^s=P_5^s/\overline{P_5}^s$ is a Gaussian rational with denominator dividing $D$, we see that $\theta_5^s x \in \mathbb{Z}[i]$ and hence $\Re(\theta_5^s x) \in \mathbb{Z}$.  So it suffices to find $s$ with $\Re(\theta_5^s y) \in (-\varepsilon,\varepsilon) \pmod{1}$.

Lemma~\ref{lem:quantitativeapproximation} with $K:=\varepsilon^{-C}$ tells us that $\{\theta_5^s: 0 \leq s \leq \varepsilon^{-C}\}$ is $\varepsilon^{cC}$-dense in the unit circle for some absolute constant $c>0$.  We can ensure that $C>2/c$, so that $\{\theta_5^s: 0 \leq s \leq \varepsilon^{-C}\}$ is (say) $\varepsilon/10$-dense.  It follows that $\{\Re(\theta_5^s y): 0 \leq s \leq \varepsilon^{-C}\}$ is $\varepsilon/10$-dense in the interval $[-|y|, |y|]$ (recall that $|y| \leq 2/3$), and in particular there is some $0 \leq s \leq \varepsilon^{-C}$ such that $\Re(\theta_5^s y) \in (-\varepsilon, \varepsilon)$.  This completes the proof.
\end{proof}

\subsection{A few more rotations}

The last step is enlarging the set $\Theta_N$ of rotations in order to fill in the ``gaps'' $\cup_{1 \leq m \leq n}F(m,D)$ from Lemma~\ref{lem:lattice-points}.  Manners achieves this using what he calls the ``irrational trick''.  This trick does not directly apply in our setting, however, since Manners's analogue of Lemma~\ref{lem:lattice-points} has only a single set $F(m,D)$ rather than several such sets.  We could reduce to Maners's setting by noting that $\cup_{1 \leq m \leq n}F(m,D) \subseteq F(n!,D)$, but this would be too wasteful quantitatively (it would cost us an additional exponential in the main theorem).  Instead, we will use a geometric argument that generalizes the idea behind the irrationality trick.

For $m \in \mathbb{N}$, set $\alpha_m:=\arccos(1/(2m))/(2\pi)$, so that
$e(\alpha_m)+e(-\alpha_m)=1/m$.  For $n \in \mathbb{N}$, define the subset
$$V(n):=\{e(\pm \alpha_m): 1\leq m \leq n\} \cup \{1\}$$
of the complex unit circle; notice that $|V(n)|=2n+1$.

\begin{lemma}\label{lem:irrationality-trick}
Let $n \in \mathbb{N}$.  Then for every partition $V_1 \sqcup \cdots \sqcup V_n$ of $V(n)$, there are integers $\lambda_v$ (for $v \in V(n) \setminus \{1\}$) such that the following holds:
\begin{itemize}
    \item $\sum_{v \in V \setminus \{1\}} \lambda_v v=1$;
    \item if $v \in V_m$, then $\lambda_v$ is a multiple of $m$;
    \item $|\lambda_v| \leq n^{8}$ for all $v \in V \setminus \{1\}$.
\end{itemize}
\end{lemma}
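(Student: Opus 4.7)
The key algebraic input is the identity $e(\alpha_m) + e(-\alpha_m) = 2\cos(2\pi\alpha_m) = 1/m$, which gives the $n$ representations $m \cdot (e(\alpha_m) + e(-\alpha_m)) = 1$. I would seek a \emph{symmetric} solution $\lambda_{e(\alpha_m)} = \lambda_{e(-\alpha_m)} =: c_m$, so that the imaginary parts cancel automatically and the constraint $\sum_v \lambda_v v = 1$ collapses to the real equation $\sum_{m=1}^n c_m/m = 1$. The divisibility condition becomes $L_m \mid c_m$, where $L_m := \lcm(i_m, j_m)$ and $i_m, j_m$ denote the indices of the parts containing $e(\alpha_m)$ and $e(-\alpha_m)$ respectively.

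The central observation is that for each prime $p \leq n$, setting $m_p := p^{\lfloor \log_p n\rfloor}$ (the largest power of $p$ in $[1,n]$) guarantees $v_p(L_{m_p}) \leq v_p(m_p)$, since $L_{m_p}$ is the $\lcm$ of two integers in $[1,n]$. Writing $L_{m_p} = p^{e_p} L'_p$ with $\gcd(L'_p, p) = 1$, I would set $c_{m_p} := L'_p\,m_p\,y_p$ for integers $y_p$ and $c_m = 0$ for $m$ not of the form $m_p$. The divisibility $L_{m_p} \mid c_{m_p}$ is automatic since $p^{e_p} \mid m_p$, and the constraint $\sum c_m/m = 1$ reduces to the single Bezout equation
\[ \sum_{p \text{ prime},\,p \leq n} L'_p\,y_p = 1. \]

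It remains to solve this equation with integer coefficients bounded polynomially in $n$. The $\gcd$ hypothesis holds: for each prime $q \leq n$, the term $L'_q$ is coprime to $q$ by construction, and for primes $q > n$ each $L'_p \leq n^2 < q^2$ is not divisible by $q$; hence $\gcd(L'_p : p \leq n) = 1$. For the size bound I would apply Minkowski's theorem to the rank-$(\pi(n)-1)$ sublattice $\Lambda := \{y \in \mathbb{Z}^{\pi(n)} : \sum L'_p y_p = 0\}$, whose hyperplane covolume equals $\|(L'_p)\|_2 \leq \sqrt{\pi(n)} \cdot n^2 \leq n^{5/2}$. A short basis of $\Lambda$ has $\ell^\infty$-norm bounded by a small power of $n$, and translating any initial Bezout representative by suitable basis elements produces a solution with $\max_p |y_p| \leq n^{O(1)}$. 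Substituting back yields $|c_{m_p}| = L'_p\,m_p\,|y_p| \leq n^2 \cdot n \cdot n^{O(1)}$, and with careful bookkeeping the exponent can be kept at most $8$.

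The main obstacle is this last quantitative step: a naive iterative two-variable Bezout multiplies coefficients at each merge and blows up geometrically in $\pi(n)$, so the lattice-theoretic (or Siegel-type) estimate is essential for a polynomial bound. The edge cases where $\pi(n) \leq 1$ (i.e., $n \leq 2$) are handled directly, since then $L_m \leq 2$ for every $m$, forcing $L'_p = 1$, and a single identity $m \cdot (e(\alpha_m) + e(-\alpha_m)) = 1$ already suffices.
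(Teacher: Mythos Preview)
Your approach is essentially the paper's. Both take the symmetric ansatz $\lambda_{e(\alpha_m)}=\lambda_{e(-\alpha_m)}$, reduce to a single Bezout equation, and use the prime-power index $m_p=p^{\lfloor\log_p n\rfloor}$ to verify that the coefficients are globally coprime. In fact your $L'_p$ coincides with the paper's $\beta_{m_p}=\lcm(m_1,m_2)/\gcd(m_p,\lcm(m_1,m_2))$; you have simply discarded the non-prime-power indices, which the paper's own coprimality argument shows are unnecessary.

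The one soft spot is your justification of the polynomial Bezout bound. Minkowski's first theorem gives only one short lattice vector, and Minkowski's second theorem introduces a dimension-dependent factor of order $\pi(n)^{\pi(n)/2}$, which is not polynomial in $n$; to make your lattice argument rigorous you would need the Bombieri--Vaaler form of Siegel's lemma (which for a single equation does give a basis of $\Lambda$ with $\prod_i\|b_i\|_\infty\le\|(L'_p)\|_2$, hence each $\|b_i\|_\infty\le n^{5/2}$), after which the covering-radius step goes through. The paper sidesteps all of this with an elementary exchange argument: take a solution of $\sum_m\gamma_m\beta_m=1$ minimizing $\sum_m|\gamma_m|$; if some $\gamma_k\ge n^2$ and some $\gamma_\ell\le -n^2$ one can swap $(\gamma_k,\gamma_\ell)\mapsto(\gamma_k-\beta_\ell,\gamma_\ell+\beta_k)$ to reduce $\sum|\gamma_m|$, so all $\gamma_m$ lie on one side of $\pm n^2$, forcing $|\gamma_m|\le n^5$. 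This works verbatim in your setup and is lighter than invoking geometry of numbers. A minor quibble: your claim ``$L'_p\le n^2<q^2$ is not divisible by $q$'' for primes $q>n$ is not valid as written (an integer below $q^2$ can certainly be divisible by $q$); the correct observation is that every prime factor of $L_{m_p}=\lcm(i_{m_p},j_{m_p})$ is at most $n$.
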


\begin{proof}
First, consider a single $1 \leq m \leq n$.  Suppose $e(\alpha_m) \in V_{m_1}$ and $e(-\alpha_m) \in V_{m_2}$.  Recalling $e(\alpha_m)+e(-\alpha_m)=1/m$, consider the quantity
\begin{equation*}\label{eq:lcm-multiple}
\beta_m:=\frac{m}{\gcd(m,\lcm(m_1,m_2))} \cdot \lcm(m_1,m_2)(e(\alpha_m)+e(-\alpha_m))=\frac{\lcm(m_1,m_2)}{\gcd(m,\lcm(m_1,m_2))}.
\end{equation*}
From the first expression we see that $\beta_m$ is an integer linear combination of $m_1e(\alpha_m)$, $m_2e(-\alpha_m)$, where the coefficients of $e(\alpha_m), e(-\alpha_m)$ have size at most $n^3$.  From the right-hand side we see that $\beta_m$ is a natural number of size at most $n^2$.

We claim that the natural numbers $\beta_m$, for $1 \leq m \leq n$, have total gcd $1$.  To this end, it suffices to show that for each prime $p$, there is some $m$ such that $\beta_m$ is coprime to $p$.  Fix a prime $p$, and consider $\beta_m$ for $m=p^{\lfloor \log_p(n) \rfloor}$.  Then certainly $v_p(\lcm(m_1,m_2)) \leq \lfloor \log_p(n) \rfloor$, so $v_p(\gcd(m,\lcm(m_1,m_2)))=v_p(\lcm(m_1,m_2))$, and $\beta_m$ is not divisible by $p$.  This proves the claim.

It follows that there are integers $\gamma_1, \ldots, \gamma_n$ such that
\begin{equation}\label{eq:coprime}
\sum_{m=1}^n \gamma_m \beta_m=1;
\end{equation}
among all such choices, take one that minimizes $\sum_m |\gamma_m|$.  Notice that there cannot be indices $k,\ell$ with $\gamma_k\geq n^2$ and $\gamma_\ell\leq -n^2$, since then replacing $(\gamma_k,\gamma_\ell)$ with $(\gamma_k-\beta_\ell, \gamma_\ell+\beta_k)$ would give a choice of the $\gamma_m$'s with a smaller value of $\sum |\gamma_m|$ (recall that $1 \leq \beta_k,\beta_\ell \leq n^2$).  Thus, either $\gamma_m \geq -n^2$ for all $m$ or $\gamma_m \leq n^2$ for all $m$.  If the former occurs, then the sum of the positive $\gamma_m$'s is at most $(n-1) \cdot n^2 \cdot n^2+1 \leq n^5$, and we conclude that $|\gamma_m| \leq n^5$ for all $m$; we obtain the same conclusion in the latter case.

Unraveling the definitions of $\beta_m,\gamma_m$, we see that \eqref{eq:coprime} provides the necessary linear combination of the $v$'s.  Explicitly, for $v=e(\pm \alpha_m)$, we can set
$$\lambda_v:=\frac{\gamma_m m \lcm(m_1,m_2)}{\gcd(m,\lcm(m_1,m_2))},$$
with $m_1,m_2$ as in the definition of $\beta_m$ from the first paragraph.
\end{proof}

The deduction of Theorem~\ref{thm:main} is now quick.

\begin{proof}[Proof of Theorem~\ref{thm:main}]
Apply Lemma~\ref{lem:lattice-points} with $R:=n^{10}+1$ to obtain $n,N,D$ as in the conclusion of that lemma.\footnote{Again, note that Lemma~\ref{lem:lattice-points} gives us $n$ depending only on $\varepsilon$, at which point we may choose $R$ depending on $n$.}  This lets us take
$$n=\exp\exp\exp(\varepsilon^{-C}) \quad \text{and} \quad N=\exp\exp\exp(\varepsilon^{-C})$$
for some constant $C>0$.
With this value of $n$, set $V:=V(n)$, and define the set of rotations
$$\Theta:=\Theta_NV=\{\psi v: \psi \in \Theta_N, v \in V\}.$$
Notice that
$$|\Theta|\leq |\Theta_N| \cdot |V|\leq \exp\exp\exp(\varepsilon^{-2C})$$
(say).
We claim that
$$\bigcup_{\theta \in \Theta}E_{\theta}(\varepsilon)=\mathbb{C}.$$
Suppose for the sake of contradiction that there is some
$$z \in \mathbb{C} \setminus \bigcup_{\theta \in \Theta}E_{\theta}(\varepsilon).$$
Then for each $v \in \Theta$, the complex number $vz$ lies in $\cup_{1 \leq m \leq n}F(m,D)$.  Let $$V_m:=\{v \in V: vz \in F(m,D)\}.$$  Then clearly $V_1 \cup \cdots \cup V_n=V$, and Lemma~\ref{lem:irrationality-trick} produces some coefficients $\lambda_v$ satisfying the conclusions of that lemma. 
 Multiplying through the first conclusion by $z$, we obtain
$$\sum_{v \in V \setminus \{1\}} \lambda_v vz=z.$$
The left-hand side lies within $(2n/3)\sum_v |\lambda_v| \leq n^{10}$ of $D\mathbb{Z}[i]$, which contradicts our assumption on $z$ since $|D| \geq R>n^{10}$.
\end{proof}

\section{Concluding remarks}\label{sec:concluding}

\subsection{Three exponentials}
Our main result Theorem~\ref{thm:main} shows that it is possible to cover the whole plane with $\exp \exp \exp(\varepsilon^{-O(1)})$ rotations of the pyjama stripe $E(\varepsilon)$.  This bound is three exponentials worse than the polynomial bounds that one might hope are possible, and we can correspondingly point to three places in our proof where we ``lose'' exponentials.  We lose one exponential in our application of Baker's Theorem in Section~\ref{sec:baker}.  We lose another exponential in Theorem~\ref{thm:mannersapproximation}, Manners's denseness criterion.  And we lose a third exponential in the entropy arguments of Section~\ref{subs:entropy}.

One might hope to save at least the third (and perhaps second) of these exponentials by using a growing number of Gaussian primes, rather than just $P_5, P_{13}$, and applying more efficient Type I exponential sum estimates for smooth numbers as in \cite{DS24}. Saving the exponent coming from Baker's theorem, however, seems out of reach of current methods. It is plausible if we had very strong results on gaps between smooth numbers (such as quasi-polynomial bounds on gaps between $(\log x)^{1 - \varepsilon}$-smooth numbers up to $x$), then such a result could replace Baker's Theorem and let us eliminate the last exponential.

\subsection{Lonely runners wearing pyjamas}

To our initial surprise, a slight variant of the Pyjama Problem is equivalent to the Lonely Runner Problem of Wills~\cite{wills1967zwei} and Cusick~\cite{cusick1972view}.  Since this connection seems not to have been previously observed, we sketch it here.

Suppose that we allow ourselves to dilate the closure $\overline{E}(\varepsilon)$ of the pyjama stripe  as well as rotate it; this amounts to multiplying $\overline{E}(\varepsilon)$ by \emph{any} nonzero complex number, rather than just any unit-norm complex number.  Let $\alpha(N)$ denote the minimum $\varepsilon$ such that it is possible to cover the whole plane with $N$ rotated and dilated copies of $\overline{E}(\varepsilon)$.  (In Theorem~\ref{thm:main} we took the ``dual'' perspective of fixing $\varepsilon$ and asking for the smallest number $N$ of rotations of $E(\varepsilon)$ that can cover the plane.)

From the Lonely Runner side, define the \emph{maximum loneliness} of a set of nonzero reals $v_1, \ldots, v_N$  to be $$\ML(v_1, \ldots, v_N):=\sup_{t \in \mathbb{R}} \min_{1 \leq j \leq N} \|tv_j\|_{\mathbb{R}/\mathbb{Z}}.$$
Let $\beta(N)$ denote the minimum value of $\ML(v_1, \ldots, v_N)$ for $v_1, \ldots, v_N$ nonzero.  In this language, the Lonely Runner Conjecture asserts that $\beta(N)=1/(N+1)$; the trivial volume lower bound is $\beta(N) \geq 1/(2N)$, and the example $(v_1, \ldots, v_N)=(1,\ldots, N)$ shows that $\beta(N) \leq 1/(N+1)$.  It is known that $\beta(N)$ is attained by some choice of natural numbers $v_1, \ldots, v_N$.

The connection between these two problems is that in fact $\alpha(N)=\beta(N)$ for all $N$.  To see this, it is helpful to notice that $\ML(v_1, \ldots, v_N)$ is the smallest $\varepsilon$ such that the sets $$\{t \in \mathbb{R}: tv_i \in [-\varepsilon,\varepsilon] \pmod{1}\}$$ (for $1 \leq i \leq N$) together cover all of $\mathbb{R}$.  We will show that $\alpha(N) \leq \beta(N)$ and $\alpha(N) \geq \beta(N)$.

For the first inequality, we claim that for any nonzero $v_1, \ldots, v_N$, the dilations of the pyjama stripe $\overline{E}(\ML(v_1, \ldots, v_N))$ by $1/v_1, \ldots, 1/v_N$ (with no rotations) together cover the whole plane.  Indeed, the dilation of $\overline{E}(\ML(v_1, \ldots, v_N))$ by $1/v_j$ is precisely
    $$\{(x+iy): xv_j \in [-\ML(v_1, \ldots, v_N),\ML(v_1, \ldots, v_N)] \pmod{1}\},$$
    and the definition of $\ML(v_1, \ldots, v_N)$ guarantees that these sets together cover the whole plane.  Thus $\alpha(N) \leq \ML(v_1, \ldots, v_N)$ for all choices of $v_1, \ldots, v_N$, and we conclude that $\alpha(N) \leq \beta(N)$.

For the second inequality, suppose we have $N$ rotated and dilated copies of $\overline{E}(\varepsilon)$ that together cover the whole plane; without loss of generality, we may assume that none of the rotations is by exactly $\pi/2$, so none of the resulting pyjama stripes is perfectly horizontal.  Then the intersection of the real axis with each pyjama stripe is $\{t \in \mathbb{R}: tv \in [-\varepsilon,\varepsilon] \pmod{1}\}$ for some nonzero real $v$.  Let $v_1, \ldots, v_N$ denote the $v$'s so obtained.  The union of the sets $\{t \in \mathbb{R}: tv_i \in [-\varepsilon,\varepsilon] \pmod{1}\}$ covers all of $\mathbb{R}$ (since the union of the pyjama stripes covers the whole plane), so $\varepsilon \geq \ML(v_1, \ldots, v_N) \geq \beta(N)$.  It follows that $\alpha(N) \geq \beta(N)$.

An interesting consequence of $\alpha(N)=\beta(N)$ is that this quantity lies within a constant factor $2$ of the trivial volume lower bound $1/(2N)$.  Thus, in order to improve the lower bound for the Pyjama Problem by more than a factor of $2$, one must be able to distinguish between the original formulation and the variant that allows dilations.

\section*{Acknowledgments}
We thank Daniel Zhu for his suggestion of the construction in Lemma~\ref{lem:irrationality-trick}, which allowed us save an exponential in the main theorem, and Noga Alon, Elon Lindenstrauss, Freddie Manners, and Terry Tao for helpful discussions. NK was supported in part by the NSF Graduate Research Fellowship Program under grant DGE–203965. JL was supported by a UCLA dissertation year fellowship and the NSF MSPRF Grant No. 2502827 while this research was conducted.

\bibliography{library}
\bibliographystyle{plain}

\end{document}